\newcommand{\bA}{\mathbb{A}}
\newcommand{\bG}{\mathbb{G}}
\newcommand{\bQ}{\mathbb{Q}}
\newcommand{\bZ}{\mathbb{Z}}
\newcommand{\oH}{\operatorname{H}}
\newcommand{\sU}{\mathscr{U}}
\newcommand{\cB}{\mathcal{B}}
\newcommand{\cF}{\mathcal{F}}
\newcommand{\cG}{\mathcal{G}}
\newcommand{\cH}{\mathcal{H}}
\newcommand{\cL}{\mathcal{L}}
\newcommand{\cO}{\mathcal{O}}
\newcommand{\cP}{\mathcal{P}}
\newcommand{\cT}{\mathcal{T}}
\newcommand{\cU}{\mathcal{U}}
\newcommand{\cW}{\mathcal{W}}
\newcommand{\cX}{\mathcal{X}}
\newcommand{\cY}{\mathcal{Y}}
\newcommand{\cZ}{\mathcal{Z}}
\newcommand{\spec}{\operatorname{Spec}}
\newcommand{\Hom}{\operatorname{Hom}}
\newcommand{\Pic}{\operatorname{Pic}}
\newcommand{\Gm}{\bG_{{\rm m}}}
\newcommand{\GL}{\operatorname{GL}}
\newcommand{\SL}{\operatorname{SL}}
\newcommand{\Aut}{\operatorname{Aut}}
\newcommand{\Id}{\operatorname{Id}}
\newcommand{\Isom}{\operatorname{Isom}}
\newcommand{\uIsom}{\underline{\Isom}}
\newcommand{\bmu}{\bm{\mu}}
\newtheorem{theorem}{Theorem}[section]
\newtheorem{Teo}[theorem]{Theorem}
\newtheorem*{Teo*}{Theorem}
\newtheorem{Lemma}[theorem]{Lemma}
\newtheorem{Cor}[theorem]{Corollary}
\newtheorem{Prop}[theorem]{Proposition}
\newtheorem*{Ques*}{Question}
\theoremstyle{definition}
\newtheorem*{Oss'}{Remark}
\newtheorem{EG}[theorem]{Example}
\newtheorem{Def}[theorem]{Definition}
\newtheorem*{Def*}{Definition}
\newtheorem{Notation}[theorem]{Notation}
\newtheorem{Remark}[theorem]{Remark}
\begin{document}
\title[Effective morphisms and quotient stacks]{Effective morphisms and quotient stacks}
	\author[A. Di Lorenzo]{Andrea Di Lorenzo}
	\address[A. Di Lorenzo]{Humboldt Universit\"{a}t zu Berlin, Germany}
	\email{andrea.dilorenzo@hu-berlin.de}
	\author[G. Inchiostro]{Giovanni Inchiostro}
	\address[G. Inchiostro]{University of Washington, Seattle, Washington, USA}
	\email{ginchios@uw.edu}
	\maketitle
	\begin{abstract}
		We give a valuative criterion for when a smooth algebraic stack with a separated good moduli space is the quotient of a separated Deligne-Mumford stack by a torus. For doing so, we introduce a new class of morphisms, the \textit{effective} morphisms, which are a generalization of separated morphisms.
	\end{abstract}
\section{Introduction}
It has been clear since Grothendieck, that often there are advantages by considering objects as their functor of points. When one parametrizes objects with automorphisms,  it is natural to consider algebraic stacks.  As affine schemes are the building blocks for schemes, quotient stacks are the the building blocks for a wide class of stacks \cite{luna}.  Therefore, having some criterion for when an algebraic stack is a quotient stack is a foundational question which is worth investigating, both for purely theoretical reasons, and (as one expects) because it has some consequences on the moduli problem represented by the stack. For example, smooth quotient stacks have a well-defined notion of integral Chow ring \cites{EG}, and some times they measure the failure of $\operatorname{Br}= \operatorname{Br}'$ for schemes \cite{edidin2001brauer}.

While for schemes one has a cohomological criterion to determine if a scheme is affine, it is not at all obvious to come up with a criterion for when an algebraic stack is a quotient stack, nor that a criterion that completely characterizes quotient stacks should exist in the first place. 
Therefore in this paper we are interested in mildly relaxing the quotient assumption. In particular, we are interested in understanding when an algebraic stack $\cX$ is a quotient of a Deligne-Mumford stack $\cY$: one can understand Deligne-Mumford stacks as the the simplest stacks one encounters, with non-trivial stacky structure. 

In \Cref{section effective} we introduce \textit{effective} morphisms, which have the following properties:
\begin{enumerate}
\item generalize separated morphisms,
\item are defined by a valuative criterion, and
\item are stable under composition, base change, while being not \'etale local on the target.
\end{enumerate}
The relevance of effective morphisms lies in the fact that they can be used to recognize quotient of DM stacks by torus actions. What follows is our main result.
\begin{Teo}[\Cref{teo quotient implies effective} and \Cref{teo effettivo implica ho il gen set}]\label{teo intro}Let $\cX$ be a smooth algebraic stack and $\pi:\cX\to X$ a good moduli space morphism.  Assume that $X$ is separated, over a field $k$ of characteristic 0 containing all the roots of 1. Then $\pi$ is effective if and only $\cX$ is the quotient of a separated Deligne-Mumford stack by a split torus.
\end{Teo}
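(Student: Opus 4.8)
The plan is to prove the two implications separately, matching the two cited results, and I work throughout over the fixed field $k$ of characteristic $0$ containing all roots of unity, so that every torus is split and every diagonalizable stabilizer decomposes as a product of characters.

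For the implication that a torus quotient is effective, suppose $\cX = [\cY/T]$ with $\cY$ a separated Deligne--Mumford stack and $T$ a split torus, and I would verify the valuative criterion for effective morphisms introduced in \Cref{section effective} directly. Given a trait $\spec R$ with fraction field $K$ mapping to $X$ together with a lift of its generic point to $\cX$, I would pull back the $T$-torsor $\cY \to \cX$: since $T$ is split, the induced torsor over $\spec K$ is trivial after a finite extension, so the generic point lifts to $\cY$, and the separatedness of $\cY$ supplies the uniqueness of limits. The only remaining ambiguity in the filling is the one produced by the $T$-action along the special fibre, which is precisely the ambiguity that the definition of effective is designed to tolerate; stability under composition and base change recorded in \Cref{section effective} then make the verification routine once the torsor has been introduced.

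The substantial direction is that effectiveness of $\pi$ forces $\cX$ to be such a quotient. First I would invoke the étale-local structure theorem for good moduli spaces of Alper--Hall--Rydh: around a closed point $x\in\cX$ with reductive stabilizer $G_x$, the stack $\cX$ is, étale-locally on $X$, of the form $[\spec A/G_x]$. The first thing to extract from effectiveness is that the identity component $G_x^\circ$ must be a torus. This is a genuine constraint rather than an assumption: applying the valuative criterion for effective morphisms to traits built from the cocharacters of $G_x$ shows that a non-abelian reductive factor such as $\SL_2$ or $\PGL_2$ produces limits violating effectiveness, so the derived subgroup of $G_x^\circ$ is forced to be finite and $G_x$ becomes an extension of a finite group by a torus $S_x := G_x^\circ$. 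With the stabilizers pinned down, the heart of the proof is the construction of the \emph{generating set}: a finite collection of line bundles $L_1,\dots,L_n\in\Pic(\cX)$, equivalently a morphism $\cX\to BT$ for $T=\Gm^n$, whose weights at each point generate a finite-index sublattice of the character lattice of $S_x$. This makes the fibrewise map $\Stab_{\cX}\to T$ have finite kernel, so that the total space $\cY := \cX\times_{BT}\spec k$ of the associated $T$-torsor is Deligne--Mumford. I would produce the $L_i$ first on the charts $[\spec A/G_x]$ out of the characters of $S_x$, and then glue and complete them to global classes using the separatedness of $X$ together with the comparison of $\Pic(\cX)$ with the Picard groups of the charts afforded by the good moduli space morphism.

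The main obstacle is the separatedness of the resulting $\cY$, and this is exactly where effectiveness enters as an input rather than merely as a constraint. I would translate the valuative criterion for effective morphisms for $\pi$, combined with the uniqueness of limits in the separated space $X$, into the valuative criterion of separatedness for $\cY\to\spec k$: a pair of $R$-points of $\cY$ agreeing over $K$ descends to a pair of $R$-points of $\cX$ that agree generically and lie over a single $R$-point of $X$, and effectiveness forces these to coincide once the $T$-action has been rigidified away by the generating set. Assembling these steps yields $\cX\cong[\cY/T]$ with $\cY$ a separated Deligne--Mumford stack, which together with the first implication completes the equivalence.
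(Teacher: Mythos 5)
There are genuine gaps in both directions, and they stem from a single misreading: you treat ``effective'' as a valuative criterion about lifting generic points of traits and uniqueness of limits, whereas \Cref{def:effective} is a \emph{descent} condition along the local bug-eyed cover $P_{R,R'}$ (and DM gerbes $\cT\to\spec(R)$): a map $P\times_R\cT\to\cX$ compatible with $\cT\to\cY$ must descend to $\cT\to\cX$. In the quotient-implies-effective direction, the actual content is the effectivity of $\cB\Gm^n$ itself (\Cref{prop_Gm_effective_group}): one must show that a line bundle on $\cT'$, a line bundle on $\cU$, and a gluing over $\cU'$ descend to a single line bundle on $\cT$, which rests on the isomorphism $\Pic(\cT)\cong\Pic(\cU)$ for DM gerbes over DVRs (\Cref{pic_gerbe_depends_only_on_Pic(Z)}, itself relying on characteristic $0$ and the roots of unity via \Cref{lemma_pic_remains_the_same_after_field_ext}) plus an explicit absorption of the gluing scalar $\pi^d u$. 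Your sketch never touches this; the assertion that the ``remaining ambiguity is the one produced by the $T$-action, which effectiveness is designed to tolerate'' is not an argument, and the paper instead assembles the result from separated $\Rightarrow$ effective (\Cref{prop:sep to eff}), stability under composition and base change (\Cref{lm:composition}), and an S-completeness argument showing the torsor $\cY\to\cX\to X$ is separated (\Cref{teo quotient implies effective}).

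In the converse direction two claims are wrong or misplaced. First, ``over $k$ every torus is split'' is false: $k$ contains all roots of unity but need not be algebraically closed, and the stabilizers live over residue fields of closed points; ruling out twisted tori is a substantial part of the paper (\Cref{section torsors and twisted tori} and \Cref{prop nonsplit tori not eff}), as is ruling out non-central torus factors (\Cref{cor semidirect product is not effective unless phi is trivial}), both of which feed into \Cref{Teo BG effective implies G is a central ext of a torus}. The obstruction used there is not ``limits built from cocharacters'' but a non-trivial double quotient $\Aut(\cG_1)\backslash\operatorname{Isom}(\beta^*\cG_1,\alpha^*\cG_2)/\Aut(\cG_2)$ witnessing a torsor on the bug-eyed cover that does not descend (e.g.\ the explicit matrix computation in \Cref{prop sl2 not effective}). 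Second, your final paragraph inverts the logic: separatedness of the torsor $\cY$ is \emph{not} where effectiveness enters---it follows automatically from S-completeness ($\cY\to\cX$ is affine and $\cX$ has a separated good moduli space, and a DM stack is S-complete iff separated). Effectiveness is what makes the generating set exist at all: it classifies the stabilizers, it powers the base case for gerbes (\Cref{lemma la conj holds for gerbes}, via rigidification, torsion of the Brauer class, and twisted line bundles), and it drives the inductive extension over the good moduli space (\Cref{prop passo induttivo generating set}, using a Zariski-main-theorem cover, Galois averaging of line bundles, and a maximality argument in \Cref{teo effettivo implica ho il gen set}). Your proposal compresses all of this into ``glue and complete them to global classes using the separatedness of $X$,'' which skips precisely the steps where the hypothesis is consumed.
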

Effective morphisms are a generalization of separated morphisms, in the sense that all separated morphisms are effective, but effective morphisms are not \'etale local on the target. Observe that not being \'etale local on the target is essential for our application, as from \cite{luna}*{Theorem 6.4} any class of morphisms which is \'etale local on the target would not measure the property of being a quotient stack.

The complete definition of effective is \Cref{def:effective},  but on a first approximation one can understand it as follows. Consider $R'$ a DVR and $\spec(R')\cup_\eta \spec(R')$ the non-separated $\spec(R')$, namely the scheme obtained by gluing two copies of $\spec(R')$ along their generic points.
Assume that there is an action of $\bmu_2$ on $\spec(R')\cup_\eta \spec(R')$ which away from the origins is a Galois action,  and it swaps the two origins. We can take the quotient of this action, $P$. One can check that $P$ is non-separated, but it has a morphism $P\to \spec(R)$, where $R$ is the a DVR whose extension is $R'$, and such that the field extension $K(R)\subseteq K(R')$ has degree 2. An effective morphism of stacks $\cX\to \cY$ is a morphism satisfying that, for every separated Deligne-Mumford gerbe $\cT\to \spec(R)$, if we denote by $\cP:=P\times_{\spec(R)}\cT$ one has the following codimension one filling condition:
 $$\begin{tikzcd}\label{eq:diag eff}
        \cP\ar[r] \ar[d,] & \cX \ar[d] \\
        \cT\ar[r] \ar[ur,dotted] &\cY.
    \end{tikzcd}$$

The actual effectivity condition can be formulated as follows: one can consider \emph{any} \'{e}tale extension $R\subset R'$ and form the pushout
$P:=\spec(K(R)) \cup_{\spec(K(R'))} \spec(R')$; moreover, for any separated gerbe $\cT\to\spec(R)$, one can take the the cartesian product $\cP:= P\times_{\spec(R)} \cT$. A morphism of algebraic stacks $\cX\to \cY$ is effective if for every commutative diagram of solid arrows as (\ref{eq:diag eff}), one has a filling given by the dotted arrow above.


Other than the result in \Cref{teo intro}, effective morphisms are relevant in the context of the valuative criterion for universally closed morphisms, as we now briefly explain. Recall that if  $\cX\to \cY$ is a morphism of \textit{schemes} with mild assumptions, being proper can be measured with a codimension 1 filling property (namely, the valuative criterion for properness). It is well known that if $\cX$ and $\cY$ are algebraic stacks instead, being universally closed is equivalent to the following slightly modified valuative criterion. If one has a diagram as the one on the left, then there is a possibly ramified extension of DVRs $\spec(R)\to \spec(R')$ and a diagram as the one on the right:
$$\xymatrix{\spec(K(R))\ar[r] \ar[d] & \cX\ar[d] \\ \spec(R) \ar[r] & \cY} \text{ }\text{ }\text{ }\text{ }\xymatrix{\spec(K(R'))\ar[d] \ar[r] & \spec(K(R))\ar[r] \ar[d] & \cX\ar[d] \\ \spec(R')\ar@{..>}[rru]\ar[r]&\spec(R) \ar[r] & \cY.} $$
It is very natural to ask what conditions one can impose on the extension $\spec(R')\to \spec(R)$, and still guarantee the extension of a lifting as before. We prove the following:
\begin{Teo}[\Cref{teo val criterion}]
    Assume that $\cX\to \cY$ is effective and universally closed morphism, over a field $k$ of characteristic 0 containing all the roots of 1. Then, in the diagram above, one can assume that $R'=R[t]/(t^n-\pi)$ where $\pi$ is a uniformizer for $R$. In particular, one can assume that the residue fields of $R$ and $R'$ are isomorphic.
\end{Teo}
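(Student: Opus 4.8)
The plan is to start from the ordinary valuative criterion and then use effectivity to trade the ramified extension produced there for a totally ramified Kummer one. First, since $\cX\to\cY$ is universally closed, the (stacky) valuative criterion gives a finite extension of DVRs $R\subseteq R'$ together with a lift $g\colon\spec(R')\to\cX$ of the composite $\spec(K(R'))\to\spec(K(R))\to\cX$, compatible with $\spec(R)\to\cY$. Replacing $R'$ by a suitable localization of its normalization, I may assume $R'$ is a DVR and that the residue extension $\kappa(R)\subseteq\kappa(R')$ is separable (automatic in characteristic $0$). My goal is to enlarge $R'$, without losing the lift, until the extension acquires a very rigid shape.

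The crux is the following rearrangement. Let $R\subseteq R_1\subseteq R'$ be the maximal unramified subextension, so that $R_1/R$ is finite étale with $\kappa(R_1)=\kappa(R')$ and $R'/R_1$ is totally ramified; as we are in characteristic $0$ the latter is tame, hence (using that the residue field contains all of $\bmu_n$) of Kummer type $R'=R_1[\varpi]$ with $\varpi^n=w\varpi_1$ for a uniformizer $\varpi_1$ of $R_1$ and a unit $w\in R_1^\times$. Because $R_1/R$ is unramified, a uniformizer $\pi$ of $R$ is also one of $R_1$, so after rescaling $\varpi_1=\pi$ up to a unit I may absorb everything into the single unit $w$. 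I then enlarge the unramified part: replacing $R_1$ by the unramified extension $R_0/R$ whose residue field adjoins an $n$-th root of $\bar w$, Hensel's lemma makes $w$ an $n$-th power, so $\varpi$ can be rechosen to satisfy $\varpi^n=\pi$ exactly. The upshot is an enlargement $R''=R'\otimes_{R_1}R_0$, still carrying a lift $\spec(R'')\to\cX$ obtained from $g$ by base change, together with an isomorphism $R''\cong R_0\otimes_R\tilde R$, where $R_0/R$ is finite étale and $\tilde R:=R[t]/(t^n-\pi)$ is totally ramified.

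The key observation is now that, since $R''\cong R_0\otimes_R\tilde R$ with $R_0/R$ étale, the extension $\tilde R\subseteq R''$ is itself finite étale. I therefore apply effectivity of $\cX\to\cY$ to the base DVR $\tilde R$, the étale extension $\tilde R\subseteq R''$, and the trivial gerbe $\cT=\spec(\tilde R)$. The associated pushout $P=\spec(K(\tilde R))\cup_{\spec(K(R''))}\spec(R'')$ maps to $\cX$ via $g$ on $\spec(R'')$ and via $\spec(K(\tilde R))\to\spec(K(R))\to\cX$ on the generic piece; these agree over $\spec(K(R''))$ because $g$ extends the pullback of the original generic map, and the whole diagram is compatible with $\spec(\tilde R)\to\spec(R)\to\cY$. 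Effectivity (with $\cP=P$) then produces the dotted filling $\spec(\tilde R)\to\cX$, which is exactly a lift over $\tilde R=R[t]/(t^n-\pi)$; since this extension is totally ramified, $\kappa(\tilde R)=\kappa(R)$, as claimed. Note that only the trivial gerbe is needed here: the full gerbe generality in the definition of effective is not used for this statement.

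I expect the main obstacle to be the rearrangement step, i.e.\ producing the enlargement with $R''\cong R_0\otimes_R\tilde R$ so that the residual ramification is defined already over $R$ and decoupled from the residue extension. This is where characteristic $0$ (separability of the residue extension and tameness of the ramification) and the presence of all roots of unity (to put the totally ramified part in Kummer form and to absorb the leftover unit into an unramified extension) are both essential; without them one cannot guarantee that $R''/\tilde R$ is étale, which is precisely the hypothesis that lets effectivity convert the ramified lift into one over $\tilde R$.
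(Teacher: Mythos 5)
Your overall strategy coincides with the paper's: run the stacky valuative criterion to get a ramified extension $R\subseteq R'$ with ramification index $n$, set $\widetilde R := R[t]/(t^n-\pi)$, show that after a suitable enlargement/normalization one obtains an \emph{\'etale} extension of $\widetilde R$ still carrying a lift, and then invoke effectivity (with the trivial gerbe $\cT=\spec(\widetilde R)$, exactly as in the paper) to descend the lift to $\spec(\widetilde R)$. The only substantive difference is in how the \'etaleness over $\widetilde R$ is obtained: the paper gets it in one line from Abhyankar's lemma (Stacks Project, Tag 0BRM) applied to the localizations of the normalization of $R'\otimes_R\widetilde R$, whereas you reprove this special case by hand via the unramified/totally ramified decomposition and Kummer theory. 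That is a legitimate route, but one step of your hand-proof fails as written.

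The flawed step is: ``replacing $R_1$ by the unramified extension $R_0/R$ whose residue field adjoins an $n$-th root of $\bar w$, Hensel's lemma makes $w$ an $n$-th power.'' The ring $R_0$ is only a DVR, not henselian, so an $n$-th root of $\bar w$ in the residue field does \emph{not} lift to $R_0$: for instance, with $R_1=\mathbb{C}[s]_{(s)}$ and $w=1+s$, the residue $\bar w=1$ is a square in $\mathbb{C}$, yet $1+s$ is not a square in $\mathbb{C}[s]_{(s)}$ (its square root is a genuine power series, not a rational function). The fix is immediate and preserves the rest of your argument: instead of prescribing the residue field extension, adjoin the root directly, i.e.\ take $R_0$ to be the localization of $R_1[x]/(x^n-w)$ at a maximal ideal over $\mathfrak{m}_{R_1}$. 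This is \'etale over $R_1$ because $w$ is a unit and the residue characteristic is $0$ (the derivative $nx^{n-1}$ is invertible wherever $x^n=w$), and in $R_0$ the unit $w$ is an $n$-th power on the nose. With this correction your computation $R'\otimes_{R_1}R_0\cong R_0[y]/(y^n-\pi)\cong R_0\otimes_R\widetilde R$ goes through, $R''/\widetilde R$ is \'etale by base change from $R_0/R$, and the concluding application of effectivity is exactly the paper's.
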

See also \cite{bresciani2022arithmetic} for an analogous result in the case where $\cX$ is a tame Deligne-Mumford stack.

The paper is organized as follows. In \Cref{section effective} we introduce effective morphisms, and we prove that if $\cX$ is an algebraic stack with a good moduli space admitting a separated, relatively Deligne-Mumford morphism $\cX\to \cB \Gm^n$, then the good moduli space map is effective (\Cref{teo quotient implies effective}). In \Cref{section torsors and twisted tori} we report some results about torsors and twisted tori; in particular we prove that the classifying space of a twisted torus is a certain fiber product (\Cref{theorem from semidirect product to nonsplit torus}), which might be of independent interest.
In \Cref{section when BG not eff} we prove that $\cB G$ is effective if and only if $G$ is a central extension of a finite group by a split torus, whereas in \Cref{section from eff to quotient} we prove the other direction of \Cref{teo intro}

\subsection{Conventions}
All algebraic stacks will have affine (hence separated) diagonal. We work over a field $k$ which is of characteristic 0 containing all the roots of 1. Given two morphisms $X\to \spec(A)$ and $\spec(B)\to \spec(A)$, we denote by $X_B:=X\times_{\spec(A)}\spec(B)$. Unless oterwise stated, all groups will be reductive and over $k$, and all actions will be left actions. All algebraic stacks will be essentially of finite type over $k$, and the automorphism group of their geometric points will be reductive. We will use the shorthand DM for Deligne-Mumford. 
\subsection*{Acknowledgements} We are thankful to Jarod Alper, Dori Bejleri, Siddarth Mathur, Minseon Shin, David Rydh and Angelo Vistoli for helpful conversations. We are thankful to the referee for carefully reading this manuscript and for pointing out a gap in a previous version of this work. 

\section{Effective morphisms}\label{section effective}

In this section we introduce the main class of morphisms of algebraic stacks, \textit{effective morphisms}, which are a mild generalization of separated morphisms.

\subsection{The ``local bug-eyed cover”}We recall a construction of a special push-out, related
to the ``bug-eyed cover” (\cite[Example 2.19]{kollar1997quotient} and \cite[Example 3.9.2]{Alp21}). 

Consider a DVR $R$ with fraction field $K(R)$, let $K(R')$ be a finite extension of $K(R)$ and let $R'$ be a DVR with fraction field $K(R')$ such that the inclusions $R\to K(R) \to K(R')$ lands in $R'$; namely, the ring $R'$ is the localization along a closed point of the normalization of $\spec(R)$ in $K(R')$. Assume also that $\spec(R')\to \spec(R) $ is \'etale. We have the following diagram:
$$\xymatrix{\spec(K(R'))\ar[r] \ar[d] & \spec(K(R))\ar[d] \\ \spec(R')\ar[r] & \spec(R). }$$
Recall that as $R\to R'$ is \'etale, the uniformizer of $R$ is also a uniformizer for $R'$. If we denote by $k_R$ and $k_{R'} $ the residue fields of $R $ and $R'$ respectively, we have an extension $k_R\subseteq k_{R'}$. 

It follows from \cite[Theorem B]{rydh2011etale} that if this extension is surjective, the diagram above is a pushout in the category of algebraic stacks. In other terms, for every algebraic stack $\cX$, the data of a map $\spec(R)\to \cX$ is equivalent to two maps $a:\spec(R')\to \cX$ and $b:\spec(K(R))\to \cX$, together with an isomorphism $\sigma:a|_{K(R')}\to b|_{K(R')}$. However, if $k_R\subsetneq k_{R'}$, the two maps $\spec(K(R'))\to \spec(K(R))$ and $\spec(K(R'))\to\spec(R')$ admit a pushout $P:=P_{R,R'}$ (from \cite{rydh2011etale}) which is \textit{not} isomorphic to $\spec(R)$, as the residue field of the closed point of $P$ is $k_{R'}$.
\begin{Def}
    We call the \textit{local bug-eyed cover} the pushout $P_{R,R'}$ described above. When there is no ambiguity, we drop the subscripts $R$ and $R'$.
\end{Def}
\begin{EG}
We describe the push-out $P$ in the special case where $[K(R):K(R')] = 2$ (and $k_R \neq k_{R'}$). In this case there is a $\bmu_2$-action on $\spec(R')$, whose associated quotient scheme is $\spec(R)$. We can pull back the cartesian diagram on the left along $\spec(R')\to\spec(R)$ to obtain the one on the right:
\[
\begin{tikzcd}
\spec(K(R')) \ar[r] \ar[d] & \spec(R') \ar[d] \\
\spec(K(R)) \ar[r] & P,
\end{tikzcd}
\quad
\begin{tikzcd}
\spec(K(R'))\times\bmu_2 \ar[r] \ar[d] & \spec(R')\times\bmu_2 \ar[d] \\
\spec(K(R')) \ar[r] & P'.
\end{tikzcd}
\]
Observe that $P'$ is still a pushout, because formation of the pushout commutes with base change \cite[Theorem C]{rydh2011etale}.
By construction, the closed points of $P'$ correspond to two copies of $\spec(k_{R'})$: we can then regard $P'$ as two copies of $\spec(R')$ glued at the generic points. It follows then that $P$ is the algebraic space obtained by quotiening $P'$ by the action of $\bmu_2$ that on the generic point agrees with the Galois action, and it switches the two closed points.
\end{EG}
\subsection{Effective morphisms}
We now introduce the main definition of this
paper.
\begin{Def}\label{def:effective} Let $\cX\to\cY$ be a morphism of algebraic stacks, let $R$ (resp. $R')$ be a DVR with fraction field $K(R)$ (resp. $K(R')$) and $\beta:\spec(R')\to\spec(R)$ be an \'etale extension. Let $P:=P_{R,R'}$ be the corresponding local bug-eyed cover. Then we say that $\cX\to\cY$ is \emph{effective} if:
given any $R$ and $R'$ as above, given any separated DM gerbe $\cT\to \spec(R)$, and a diagram of solid arrows as the one below, one can always find a dotted arrow such that the resulting diagram commutes:
    \[
    \begin{tikzcd}
        P\times_R \cT \ar[r] \ar[d] & \cX \ar[d] \\
        \cT\ar[r] \ar[ur, dotted] & \cY.
    \end{tikzcd}
    \]
    As usual, if $k$ denotes the ground field, we say that $\cX$ is effective if $\cX\to \spec(k)$ is effective.
\end{Def}
\begin{Remark}It is already interesting to specialize the definition of effective to the case where $\cT=\spec(R)$, namely, to the case where the gerbe is trivial. In this case, one can check that for any morphism $f:\spec(R)\to \cY$ such that
\begin{enumerate}
\item we can lift $f$ generically, i.e. there exists a lifting $f_{K(R)}:\spec(K(R))\to\cX$,
\item we can lift $f$ globally up to \'etale covers, in the sense that there exists an \'etale extension $\beta:\spec(R')\to\spec(R)$ and a lifting $g:\spec(R')\to\cX$, and
\item there is an isomorphism $\varphi:(\beta|_{K(R')})^*f_{K(R)}\to g|_{K(R')}$
\end{enumerate}
then we can actually lift $f$ to a morphism $\spec(R)\to\cX$, in a way compatible with $f_{K(R)}$, $g_{K(R')}$ and $\varphi$. This situation arises typically when $\cX\to\cY$ is universally closed, thanks to the existence part of the valuative criterion of properness for algebraic stacks \cite{stacks-project}*{\href{https://stacks.math.columbia.edu/tag/0CLW}{Tag 0CLW},\href{https://stacks.math.columbia.edu/tag/0CLX} {Tag 0CLX}}.
\end{Remark}
\begin{EG} The classifying stack $\cB \GL_2$ is not effective. We will see in \Cref{section when BG not eff}, in particular \Cref{Teo BG effective implies G is a central ext of a torus} that $\cB G$ is effective if and only if $G$ is a central extension of a split torus by a finite group. This can also be proven directly, using the same argument of \Cref{prop sl2 not effective}.
\end{EG}
It turns out that effective morphisms are a generalization of separated morphisms.
\begin{Prop}\label{prop:sep to eff}
    Assume that $\cX\to\cY$ is separated. Then it is also effective.
\end{Prop}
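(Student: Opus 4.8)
The plan is to show that the separatedness of $\cX\to\cY$ forces uniqueness of the dotted lift, and then to construct the lift by descent along the étale cover. Fix an effectivity datum: an étale extension $\beta:\spec(R')\to\spec(R)$ with local bug-eyed cover $P=P_{R,R'}$, a separated DM gerbe $\cT\to\spec(R)$, and a commutative square of solid arrows with $P\times_R\cT\to\cX$ on top and $\cT\to\cY$ on the bottom. I want to produce the dotted arrow $\cT\to\cX$ over $\cY$. The key structural fact I would exploit is that $P$ is built as the pushout $\spec(K(R))\cup_{\spec(K(R'))}\spec(R')$, so a map out of $P$ (and after base change, out of $P\times_R\cT$) is the data of a map on the $R'$-part, a map on the $K(R)$-part, and an identification of the two over $K(R')$. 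In particular, the composite $P\times_R\cT\to\cX$ restricts to a map $\spec(R')\times_R\cT\to\cX$ on the étale piece and to a generic map over $K(R)$, compatibly.

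The main step is descent. Because $\beta$ is étale (and $\spec(R')\to\spec(R)$ is faithfully flat after accounting for the residue-field subtlety — the relevant point is that $R'\otimes_R R'$ decomposes appropriately so that the two projections $\spec(R')\times_R\spec(R')\to\spec(R')$ give the correct cocycle category), the given map $\spec(R')\times_R\cT\to\cX$ together with its generic descent datum should descend to a map $\cT\to\cX$. Here is where separatedness enters decisively: to descend I need an isomorphism between the two pullbacks of the $R'$-map to $\spec(R')\times_R\spec(R')\times_R\cT$ satisfying the cocycle condition, and this isomorphism is prescribed on the generic fiber by $\varphi$ (the gluing datum packaged into $P$). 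Separatedness of $\cX\to\cY$ means the relative diagonal $\cX\to\cX\times_\cY\cX$ is proper, hence the $\uIsom$-scheme of two $\spec(R')\times_R\spec(R')$-points of $\cX$ lying over the same $\cY$-point is proper (in fact finite, being also affine via the affine-diagonal convention) over the base; so a generically defined isomorphism of the two pullbacks extends \emph{uniquely} over the codimension-one points. Thus $\varphi$ extends to an integral isomorphism, and one checks the cocycle condition again holds because it holds generically and $\uIsom$ is separated (so the two integral isomorphisms witnessing associativity, agreeing generically, agree). This yields an effective descent datum, producing $\cT\to\cX$, which by construction agrees with the original maps generically and on the $R'$-piece, hence is compatible over $\cY$ and restricts correctly to give the desired factorization.

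I would organize the argument so that the gerbe $\cT$ is handled by the same mechanism: since $\cT\to\spec(R)$ is a separated DM gerbe, it is in particular a proper, separated, quasi-finite-diagonal base, and base-changing the étale cover $\spec(R')\to\spec(R)$ to $\cT$ preserves the étale-descent setup; so I can run the descent relative to $\cT$ rather than worrying about the stacky structure separately. Equivalently, one views $P\times_R\cT\to\cT$ as the ``local bug-eyed cover over the gerbe'' and applies the pushout description of Rydh after base change (justified, as in the Example, by the fact that formation of the pushout commutes with base change, \cite[Theorem C]{rydh2011etale}).

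The part I expect to be the real obstacle is extending the generic gluing isomorphism $\varphi$ across the closed points and verifying it remains a genuine descent datum (the cocycle condition) there, rather than merely a generic one. This is precisely the step that fails for non-separated $\cX\to\cY$ and succeeds here: properness of the diagonal guarantees the valuative-criterion extension of $\varphi$ over each codimension-one point of $\spec(R')\times_R\spec(R')\times_R\cT$, and its \emph{uniqueness} (from the valuative criterion for the proper, separated $\uIsom$) is what forces the extended isomorphisms to satisfy the cocycle identity, since any two candidate extensions agreeing on the dense generic fiber must coincide. Once the descent datum is effective, the resulting map and its compatibility with $f$, $g$, and $\varphi$ are formal, so I would keep that bookkeeping brief.
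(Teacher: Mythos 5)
Your proposal is correct and follows essentially the same route as the paper: separatedness makes the relative $\uIsom$ affine and proper (hence finite), so the generically prescribed gluing isomorphism extends uniquely over the regular one-dimensional base, uniqueness forces the cocycle condition, and étale descent then produces the lift $\cT\to\cX$. The only organizational difference is that the paper first passes to a scheme atlas $T\to\cT$ and descends in two stages (along $T'\to T$, then along $T\to\cT$), whereas you descend in one step along $\cT'\to\cT$ over the stacky base --- which is fine, since the extension-of-isomorphisms lemma you invoke there follows from the scheme case applied étale-locally together with the uniqueness of the extension.
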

We will first prove a few auxiliary lemmas, and introduce the following notations
\begin{Notation}
    In this section, $R\to R'$ will denote an \'etale extension of DVRs, $\cT\to \spec(R)$ will be a gerbe which is a DM stack, $P=P_{R,R'}$ will be the local bug-eyed cover associated to $R\to R'$. Furthermore:
    \begin{itemize}
        \item we denote by $\cT' :=\spec(R')\times_{\spec(R)}\cT$,
        \item we denote by $\cU:=\spec(K(R))\times_{\spec(R)}\cT$ and $\cU':=\spec(K(R'))\times_{\spec(R)}\cT$,
        \item we denote by $\cP:= P\times_{\spec(R)}\cT$.
    \end{itemize}
    \end{Notation}

\begin{Lemma}\label{lemma_extension_isom_from_open_of_regular_onedim}
Let $\cX\to \cY$ be a separated morphism of algebraic stacks, let $T$ be a regular 1-dimensional scheme with a morphism $\phi: T\to \cY$, and let $U\hookrightarrow T$ be a dense open subscheme of $T$. Assume that one has two lifts $f_1, f_2:T\to \cX$ of $\phi$ and an isomorphism $\xi:f_1|_U\to f_2|_U$. Then there is an unique extension of $\xi$ to an isomorphism $f_1\to f_2$.
\end{Lemma}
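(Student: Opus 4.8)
The plan is to reinterpret the datum of an isomorphism $f_1\to f_2$ lying over $\phi$ as a section of an auxiliary proper algebraic space over $T$, and then to produce and extend that section using the valuative criterion. Concretely, let $I:=\uIsom_\cY(f_1,f_2)$ be the sheaf on $T$ whose sections over $T'\to T$ are the isomorphisms between $f_1|_{T'}$ and $f_2|_{T'}$ compatible with $\phi$. This is exactly the fibre product
\[
I \;=\; T \times_{(f_1,f_2),\,\cX\times_\cY\cX,\,\Delta_{\cX/\cY}} \cX,
\]
so it is representable by an algebraic space, since diagonals of algebraic stacks are representable. Under this translation, an extension of $\xi$ to an isomorphism $f_1\to f_2$ is precisely a section of $I\to T$ restricting over $U$ to $\xi$, viewed as a section $s_U\colon U\to I$.

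The hypothesis that $\cX\to\cY$ is separated means that $\Delta_{\cX/\cY}$ is proper; as $I\to T$ is the base change of $\Delta_{\cX/\cY}$ along $(f_1,f_2)$, the morphism $I\to T$ is proper, in particular separated and universally closed. From this I would first deduce \emph{uniqueness}: if two sections of $I\to T$ agree on the dense open $U$, then their equalizer is the pullback of the diagonal of $I\to T$, hence a closed subscheme of $T$ containing $U$; since $T$ is reduced (being regular) and $U$ is dense, the equalizer is all of $T$, so any extension of $\xi$ is unique.

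For \emph{existence} I would produce the section by taking a closure. Let $Z\subseteq I$ be the scheme-theoretic image of $s_U\colon U\to I$; then $Z\to T$ is proper and restricts to an isomorphism over $U$. Because $Z$ is the closure of the one-dimensional locally closed $s_U(U)$, every fibre of $Z\to T$ over a point of $T\setminus U$ is zero-dimensional, so $Z\to T$ is quasi-finite; being also proper it is finite, and it is birational onto the normal scheme $T$. By Zariski's main theorem a finite birational morphism onto a normal scheme is an isomorphism, hence $Z\to T$ is an isomorphism and its inverse $T\xrightarrow{\sim}Z\hookrightarrow I$ is the desired section extending $s_U$. Equivalently, one may argue pointwise: at each closed point $t\in T\setminus U$ the local ring $\sO_{T,t}$ is a DVR whose generic point maps into $U$, and the valuative criterion for the proper morphism $I\to T$ yields a unique lift $\spec(\sO_{T,t})\to I$; the uniqueness established above guarantees these local lifts agree with $s_U$ on overlaps and glue to a global section.

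I expect the main obstacle to be this existence step — passing from the section over the dense open $U$ to a global section — and in particular verifying that $Z\to T$ is genuinely quasi-finite (equivalently, that the local valuative extensions glue). The remaining ingredients, namely representability of $I$, properness of $I\to T$, and uniqueness, are formal consequences of the separatedness of $\cX\to\cY$ together with the regularity of $T$.
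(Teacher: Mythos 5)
Your proof is correct and follows the same skeleton as the paper's: encode isomorphisms $f_1\to f_2$ over $\cY$ as sections of $I=\uIsom_{\cY}(f_1,f_2)=T\times_{\cX\times_\cY\cX}\cX$, note that $I\to T$ is proper as a base change of the relative diagonal, get uniqueness from separatedness of $I\to T$ plus density of $U$ in the reduced $T$, and get existence by extending the section over the regular one-dimensional base. The difference lies in how the existence step is closed. The paper invokes its standing convention that all stacks have \emph{affine} diagonal, so that $I\to T$ is affine \emph{and} proper, hence finite; then $I$ is a scheme and the extension of the section is the ordinary schematic valuative criterion at the finitely many DVRs $\cO_{T,t}$, $t\in T\smallsetminus U$, glued using uniqueness. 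You use only properness, so your $I$ is a priori just an algebraic space, and you compensate with the scheme-theoretic-image/ZMT argument; this buys a statement that does not need the affine-diagonal convention, at the cost of two points that deserve more care, both of which you correctly flag. First, quasi-finiteness of $Z\to T$ is not purely topological: ``the closure of a one-dimensional set is one-dimensional'' uses that $T$ is Noetherian, regular of dimension one, hence universally catenary, so the dimension formula applies to the integral, generically finite $Z\to T$ and gives $\dim Z\le 1$, whence the boundary $Z\smallsetminus s_U(U)$, which contains every fiber over $T\smallsetminus U$, is zero-dimensional (one also needs that proper plus quasi-finite implies finite for algebraic spaces, which is standard). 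Second, your fallback appeal to ``the valuative criterion for the proper morphism $I\to T$'' is not quite off-the-shelf: for algebraic spaces (as for stacks) the standard existence statement permits a finite extension of the DVR, and the extension-free version you need is essentially what your ZMT argument proves, so route (a) should be regarded as the proof of route (b) rather than an alternative to it. If you additionally use the paper's affine-diagonal convention, both subtleties evaporate, and your argument collapses to the paper's.
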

\begin{proof}
The setup is equivalent to the following diagram
\begin{equation}
    \begin{tikzcd}
        \uIsom(f_1|_U,f_2|_U) \ar[r] \ar[d, "q"] & \uIsom(f_1,f_2) \ar[r] \ar[d, "p"] & \cX \ar[d, "\delta_{\cX/\cY}"] \\
        U \ar[r] & T \ar[r] & \cX\times_{\cY}\cX
    \end{tikzcd}
\end{equation}
and, by assumption, $q$ has a section. But $\delta_{\cX/\cY}$ is affine and proper, so also $p$ is affine and proper. By assumption, $T$ is regular of dimension one, so from the valuative criterion for properness every morphism defined on a dense open subscheme of $T$ extends uniquely to the whole $T$. In particular, as $q$ has a section, $p$ is proper, and $T$ is regular and of dimension 1, also $p$ has a section. In other terms, $f_1$ and $f_2$ are isomorphic, and as $p$ is also separated, the isomorphism is unique.
\end{proof}

\begin{Lemma}\label{lemma_separated_implies_effective_with_no_gerbe}
Let $T$ be a regular 1-dimensional scheme, let $U\to T$ be a schematically dense open subscheme of $T$ and let $T'\to T$ be an \'etale and surjective morphism. Assume that one has a commutative diagram of solid arrows as the one below, where $\cX\to \cY$ is a separated morphism and $Q$ is the push-out of $U'\to U$ and $U'\to T'$ which exists by \cite{rydh2011etale}*{Theorem B}:
$$\begin{tikzcd}
      Q \ar[r] & \cX \ar[d] \\
     T \ar[r] \ar[from=u, crossing over] \ar[ur, dotted] & \cY.
\end{tikzcd}$$.

Then there is a dotted arrow as the one above, which makes the diagram 2-commutative. 
\end{Lemma}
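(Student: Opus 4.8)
The plan is to reduce the problem to étale descent along $T'\to T$ for the map coming from the $T'$-component of $Q\to\cX$, using separatedness to extend the required descent isomorphism from the generic fiber. First I would unwind the data of the solid arrow $Q\to\cX$. By the description of the pushout in \cite{rydh2011etale}, giving a map $Q\to\cX$ is equivalent to giving a map $f_U\colon U\to\cX$, a map $g\colon T'\to\cX$, and an isomorphism $\sigma\colon f_U|_{U'}\xrightarrow{\sim}g|_{U'}$, all lying over the composite $Q\to T\to\cY$; here $U':=U\times_T T'$ and the natural map $Q\to T$ is the one induced by $U\hookrightarrow T$ and $T'\to T$. Producing the dotted arrow $T\to\cX$ then amounts to descending $g$ along the étale surjection $T'\to T$ in a manner compatible with $f_U$.

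Next I would assemble a candidate descent datum. Write $T'':=T'\times_T T'$ and $T''':=T'\times_T T'\times_T T'$; since $T'\to T$ is étale and $T$ is regular of dimension one, both $T''$ and $T'''$ are again regular of dimension one, with schematically dense open subschemes $U'':=U'\times_U U'$ and $U''':=U'\times_U U'\times_U U'$. Because $f_U$ is already defined over the base, its pullback $f_U|_{U'}$ carries the canonical descent datum $c\colon\mathrm{pr}_1^*(f_U|_{U'})\xrightarrow{\sim}\mathrm{pr}_2^*(f_U|_{U'})$ over $U''$. Transporting $c$ through $\sigma$ produces an isomorphism $\tau_0\colon\mathrm{pr}_1^*(g|_{U'})\xrightarrow{\sim}\mathrm{pr}_2^*(g|_{U'})$ between the two restrictions of $g$ to $U''$.

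The key step is to upgrade $\tau_0$ to a descent datum on all of $T''$. The two maps $\mathrm{pr}_1^*g,\mathrm{pr}_2^*g\colon T''\to\cX$ are lifts of the common composite $T''\to T\to\cY$, and $\tau_0$ is an isomorphism between their restrictions to the dense open $U''\subset T''$. Since $\cX\to\cY$ is separated and $T''$ is regular of dimension one, \Cref{lemma_extension_isom_from_open_of_regular_onedim} extends $\tau_0$ uniquely to an isomorphism $\tau\colon\mathrm{pr}_1^*g\xrightarrow{\sim}\mathrm{pr}_2^*g$ over $T''$. The cocycle condition for $\tau$ is an equality of two isomorphisms over $T'''$, and both sides restrict over the dense open $U'''$ to the cocycle identity satisfied by $c$, hence by $\tau_0$; as $T'''$ is again regular of dimension one, the uniqueness clause of \Cref{lemma_extension_isom_from_open_of_regular_onedim} forces the two sides to agree on all of $T'''$. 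Thus $(g,\tau)$ is genuine descent data, and by effective étale descent for the algebraic stack $\cX$ it descends to a morphism $\tilde f\colon T\to\cX$ with $\tilde f|_{T'}\cong g$, compatibly over $\cY$.

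Finally I would verify the two triangles. By construction $\tilde f$ lies over $T\to\cY$, which gives the lower triangle. For the upper triangle I would check that $\tilde f|_U\cong f_U$: restricting $\tilde f|_{T'}\cong g$ to $U'$ and composing with $\sigma^{-1}$ recovers $f_U|_{U'}$ compatibly with the descent datum, and since $U'\to U$ is étale surjective this identifies $\tilde f|_U$ with $f_U$. By the universal property of the pushout $Q$, the pair $(f_U,g)$ together with $\sigma$ then shows that $Q\to T\xrightarrow{\tilde f}\cX$ agrees with the given map $Q\to\cX$, so the whole diagram is $2$-commutative. I expect the main obstacle to be the descent bookkeeping in the third paragraph, namely extending the isomorphism and verifying the cocycle condition on $T'''$: this is precisely where separatedness, through \Cref{lemma_extension_isom_from_open_of_regular_onedim}, does the essential work, while the remaining checks are formal consequences of the pushout's universal property.
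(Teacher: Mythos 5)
Your proposal is correct and follows essentially the same route as the paper's proof: both unwind the pushout data into $(f_U,g,\sigma)$, transport the canonical descent datum of $f_U|_{U'}$ through $\sigma$, extend it from the dense open $U'\times_U U'$ to $T'\times_T T'$ via \Cref{lemma_extension_isom_from_open_of_regular_onedim} (where separatedness enters), deduce the cocycle condition from the uniqueness clause of that lemma, and conclude by effective \'etale descent for morphisms to an algebraic stack.
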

\begin{proof}
From the definition of pushout, the diagram in the statement of \Cref{lemma_separated_implies_effective_with_no_gerbe} is equivalent to the following diagram
\begin{equation}
\begin{tikzcd}
    U'=U\times_T T'\ar[r, "j"]\ar[d] & T' \ar[r, "\xi"] & \cX \ar[d] \\
    U \ar[r] \ar[urr] & T \ar[r] \ar[from=u, crossing over] \ar[ur, dotted] & \cY.
\end{tikzcd}
\end{equation}

Our strategy consists in making the arrow $\xi:T'\to\cX$ descend along the \'etale cover $T'\to T$. For this, consider the projections
\[\begin{tikzcd} \pi_1,\pi_2:T' \times_{T} T' \ar[r] & T'
\end{tikzcd}\]
\[
\begin{tikzcd}
    \rho_1,\rho_2:U'\times_{U}U'\ar[r] & U'
\end{tikzcd}
\]
respectively on the first and the second factor.  By assumption,  the morphism $j\circ\xi$ descends along $U'\to U$, hence there is an isomorphism $\sigma_U:\rho_1^*j^*\xi\to \rho_2^*j^*\xi$.
Observe that $U'\times_U U'\to T'\times_T T'$ is schematically dense. Indeed, the property of an open subscheme to be dense is preserved by \'etale base change, so as $U'\to T'$ is dense, also $U'\times_T T' = U'\times_UT\to T'\times_T T'$ is schematically dense. Similarly, the composition of dense
open embeddings is a dense open embedding, so also $U'\times_U U'\to U'\times_U T'\to T'\times_T T'$ is dense. Therefore from \Cref{lemma_extension_isom_from_open_of_regular_onedim} the isomorphism $\sigma_U$ extends to $\sigma:\pi_1^*\xi\to \pi_2^*\xi$.  As $\sigma_U$ satisfies the cocycle condition, from the uniqueness in  \Cref{lemma_extension_isom_from_open_of_regular_onedim}  also $\sigma$ satisfies the cocycle condition. Hence the morphism $\xi$ descends as desired.\end{proof}

\begin{proof}[Proof of \Cref{prop:sep to eff}]
We begin by introducing some notations:
\begin{itemize}
\item let $T\to \cT$ be an \'etale atlas,
\item let $T':=\spec(R')\times_{\spec(R)}T$ and $T_P:= P\times_{\spec(R)}T$, 
\item let $U:=T\times_{\spec(R)} \spec(K(R))$ and $U':=R\times_{\spec(R)} \spec(K'(R))$.
\end{itemize}

Recall that (see for example \cite[proof of Proposition 4.18]{LMB}), if $\cW_1$ and $\cW_2$ are algebraic stacks and $W\to \cW_1$ is a \'etale cover, a map $f:\cW_1\to \cW_2$ is equivalent to a map $f_W:W\to \cW_2$ with an isomorphism between the two maps $\pi_1\circ f_W, \pi_2\circ f_W:W\times_{\cW_1} W\to \cW_2$ which satisfies the cocycle condition.
Hence the morphism $\cP\to \cX$ is equivalent to a morphism $f_P:T_P\to \cX$ with an isomorphism $\xi_P:\pi_1\circ f_P \to \pi_2\circ f_P$ that satisfies the cocycle condition. 

Observe now that, as the formation of the push-out $P$ commutes with base change \cite{rydh2011etale}*{Theorem C} we have that $T_P$ is the push-out of the open embedding $U'\to T'$ and the \'etale morphism $U'\to U$.  Observe also that:
\begin{enumerate}
\item as $\spec(K(R))\to \spec(R)$ is dense and $T\to \spec(R)$ is \'etale, also $U\to T$ is dense, and
\item as $\spec(R')\to \spec(R)$ is \'etale, also $U'\to U$ is \'etale.
\end{enumerate}
Then the assumptions of \Cref{lemma_separated_implies_effective_with_no_gerbe} apply,  so the map $T\to \cY$ lifts to $\phi:T\to \cX$.  

The two maps $\phi\circ\pi_1,\phi\circ \pi_2:T\times_\cT T \to T\to \cX$ are isomorphic when restricted to $U\times_\cU U$ as they descend to $\cU$, and $U\times_\cU U \to Z\times_\cZ Z$ is open and dense. Hence by \Cref{lemma_extension_isom_from_open_of_regular_onedim}, there is an isomorphism $\sigma:\phi\circ\pi_1\to \phi\circ \pi_2$. It satisfies the cocycle condition as it satisfies it once restricted to $U\times_\cU U$, and from the uniqueness in \Cref{lemma_extension_isom_from_open_of_regular_onedim}.  Hence there is a morphism $\cT\to \cX$.
\end{proof}

\begin{Cor}\label{cor hom from spec r sono determinati da hom from P}
    For every algebraic stack $\cX$, the map $Hom(\cT,\cX)\to Hom(\cP,\cX)$ is fully faithful.
\end{Cor}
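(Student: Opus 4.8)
The corollary asserts that $\Hom(\cT,\cX)\to\Hom(\cP,\cX)$ is fully faithful, so I must establish both injectivity on morphisms (faithfulness) and surjectivity of the map on morphisms between objects with fixed images (fullness), together with the implicit essential structure.

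\textbf{Plan for faithfulness.} The plan is to show that two morphisms $\cT\to\cX$ inducing the same morphism $\cP\to\cX$ (compatibly) are canonically isomorphic, and that such an isomorphism is unique. Given $a,b:\cT\to\cX$ together with an isomorphism of their restrictions to $\cP$, I would first pass to the \'etale atlas $T\to\cT$ introduced in the proof of \Cref{prop:sep to eff}, where $\cP\to\cT$ pulls back to $T_P\to T$ with $T_P$ the pushout of $U'\to T'$ and $U'\to U$. An isomorphism between $a|_\cP$ and $b|_\cP$ restricts to an isomorphism between $a|_\cU$ and $b|_\cU$ on the generic fiber of $T$. Since $\cX$ has affine, hence separated, diagonal, the morphism $\cX\to\spec(k)$ is separated, so \Cref{lemma_extension_isom_from_open_of_regular_onedim} applies with $T$ regular of dimension one and $U\hookrightarrow T$ dense: the generic isomorphism extends \emph{uniquely} to an isomorphism $a|_T\to b|_T$. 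The cocycle condition for this extension follows from the uniqueness clause of that lemma exactly as in the descent argument in the proof of \Cref{prop:sep to eff}, yielding a unique isomorphism $a\to b$ over $\cT$. Uniqueness of the extension is precisely what gives faithfulness.

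\textbf{Plan for fullness.} For fullness I would take two morphisms $a,b:\cT\to\cX$ and an isomorphism $\theta:a|_\cP\to b|_\cP$ of their pullbacks to $\cP$, and I must produce an isomorphism $a\to b$ restricting to $\theta$. The key observation is that $\theta$ determines, after pulling back along $T\to\cT$ and using that $T_P$ is the pushout of $U'\to T'$ and $U'\to U$, an isomorphism on the dense open $U\hookrightarrow T$ (the generic-fiber component of the pushout data). Again invoking separatedness of $\cX$ and \Cref{lemma_extension_isom_from_open_of_regular_onedim}, this extends uniquely over $T$, and the uniqueness guarantees both compatibility with $\theta$ over $T_P$ and the cocycle condition needed to descend the isomorphism back to an isomorphism $a\to b$ over $\cT$. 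Because every step is governed by the unique-extension lemma, the descended isomorphism restricts to $\theta$ on $\cP$, establishing fullness.

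\textbf{Main obstacle.} I expect the essential-surjectivity-type bookkeeping to be the only genuinely delicate point: I must check that the isomorphism produced over $T$ (and its descent to $\cT$) really restricts back to the given $\theta$ over $\cP$, rather than to some other isomorphism agreeing with $\theta$ only generically. The resolution is that $\theta$ and the extension constructed from it both restrict to the \emph{same} generic isomorphism over $\cU$, and since $\cU\hookrightarrow\cP$ is schematically dense while $\cX$ is separated, the uniqueness in \Cref{lemma_extension_isom_from_open_of_regular_onedim} forces the two isomorphisms over $\cP$ to coincide. Thus the entire corollary is a formal consequence of \Cref{prop:sep to eff} together with the separatedness of $\cX$ (which holds since all our stacks have affine, hence separated, diagonal), and no new geometric input is required.
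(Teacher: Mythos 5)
Your argument has a genuine gap, and it is located at the very step you rely on throughout: the claim that ``since $\cX$ has affine, hence separated, diagonal, the morphism $\cX\to\spec(k)$ is separated.'' This conflates having a separated (indeed affine) \emph{diagonal} with being a separated \emph{stack}: affine diagonal only says that $\delta_{\cX/k}$ is an affine morphism, not that it is proper. The corollary is asserted for \emph{every} algebraic stack $\cX$ of the paper, and the stacks to which it is later applied (e.g.\ in \Cref{cor composition effective implies the first one effective}) are typically non-separated; $\cB\Gm$ is the basic example, having affine diagonal but non-proper diagonal. For such $\cX$ you cannot invoke \Cref{lemma_extension_isom_from_open_of_regular_onedim} with $\cY=\spec(k)$, since that lemma needs $\delta_{\cX/\cY}$ to be proper so that $\uIsom(f_1,f_2)\to T$ is proper and the valuative criterion applies.

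The failure is not merely formal: your strategy extends an isomorphism starting only from its restriction to the generic part $\cU$, and this is impossible for non-separated $\cX$. Take $\cX=\cB\Gm$, $\cT=\spec(R)$, and $f=g$ the trivial torsor; an isomorphism $f|_{\cU}\to g|_{\cU}$ is a unit of $K(R)$, say the uniformizer $\pi$, and it admits no extension to an isomorphism over $\spec(R)$. The corollary nevertheless holds because an isomorphism over $\cP$ carries strictly more data than its restriction to $\cU$: it also includes a compatible isomorphism over $\cT'$, which in this example forces the unit to lie in $K(R)\cap (R')^{\times}=R^{\times}$. Any correct proof must use the full $\cP$-data, which your reduction to $U\hookrightarrow T$ discards. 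This is exactly what the paper's proof does: given $f,g:\cT\to\cX$, the stack $\uIsom(f,g)\to\cT$ is a pullback of $\delta_{\cX/\cY}$, hence affine over $\cT$, hence \emph{separated as a morphism} and therefore effective by \Cref{prop:sep to eff}; the given isomorphism over $\cP$ is a section of this affine morphism after base change to $\cP$, and effectiveness (not properness) extends it to a section over $\cT$. So the correct statement to apply separatedness to is the Isom-stack over $\cT$, never to $\cX$ itself.
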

\begin{proof}Consider two morphisms $f,g:\cT\to \cX$ which are isomorphic when we restrict them to $\cP$. They give rise to the following diagram
$$\begin{tikzcd}
        \uIsom(f|_{\cP},g|_{\cP}) \ar[r] \ar[d,"q"] & \uIsom(f,g) \ar[r] \ar[d, "p"] & \cX \ar[d, "\delta_{\cX/\cY}"] \\
        {\cP} \ar[r] &\cT \ar[r,"f\times g"] & \cX\times_{\cY}\cX.
    \end{tikzcd}$$
    Since $f|_{\cP}\cong g|_{\cP}$, the map $q$ has a section. Then $p$ has a section when we pull it back to $\cP$. Since $p$ is affine, it is separated, so it is effective. Then the map $p$ has a section. 
\end{proof}
\begin{Cor}\label{cor composition effective implies the first one effective}
Assume that the composition $X\to Y \to Z$ is effective. Then $X\to Y$ is effective.
\end{Cor}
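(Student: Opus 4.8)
The plan is to reduce effectivity of $X\to Y$ to the assumed effectivity of the composite $X\to Z$, the key extra input being the full faithfulness statement of \Cref{cor hom from spec r sono determinati da hom from P}. Fix the data appearing in \Cref{def:effective}: an \'etale extension $R\to R'$ with local bug-eyed cover $P=P_{R,R'}$, a separated DM gerbe $\cT\to\spec(R)$, and, writing $\cP:=P\times_{\spec(R)}\cT$, a commutative square of solid arrows with top map $a:\cP\to X$, bottom map $g:\cT\to Y$, and a $2$-isomorphism $\alpha$ witnessing $f\circ a\cong g|_{\cP}$, where $f:X\to Y$ denotes the given morphism. We must produce a lift $h:\cT\to X$ together with compatible $2$-isomorphisms $h|_{\cP}\cong a$ and $f\circ h\cong g$.

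First I would compose everything with the structure map $\pi:Y\to Z$. Whiskering $\alpha$ with $\pi$ turns the original square into a commutative square of solid arrows for the morphism $X\to Z$, with the same top map $a:\cP\to X$ and with bottom map the composite $\cT\xrightarrow{g}Y\xrightarrow{\pi}Z$. Since $X\to Z$ is effective by hypothesis, \Cref{def:effective} provides a lift $h:\cT\to X$ equipped with a $2$-isomorphism $\beta:h|_{\cP}\cong a$ and a $2$-isomorphism over $Z$, both compatible with the whiskered square.

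Next I would promote $h$ to a lift of the original diagram over $Y$. Set $h':=f\circ h:\cT\to Y$. On $\cP$ one has $h'|_{\cP}=f\circ(h|_{\cP})\cong f\circ a\cong g|_{\cP}$, the first isomorphism coming from $f\ast\beta$ and the second from $\alpha$; denote the composite $2$-isomorphism by $\delta:h'|_{\cP}\cong g|_{\cP}$. Now \Cref{cor hom from spec r sono determinati da hom from P}, applied to the algebraic stack $Y$, asserts that the restriction functor $\Hom(\cT,Y)\to\Hom(\cP,Y)$ is fully faithful; in particular $\delta$ lifts to a unique $2$-isomorphism $\tilde\delta:h'\cong g$ with $\tilde\delta|_{\cP}=\delta$. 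Thus $h:\cT\to X$ is a filling of $g$ along $f$, with $\beta$ exhibiting the upper triangle and $\tilde\delta$ the lower one. Since $\delta$ was assembled from $\alpha$ and $\beta$ and $\tilde\delta$ restricts to $\delta$ on $\cP$, the two triangles paste on $\cP$ to recover $\alpha$, and hence $h$ is an honest filling of the diagram in \Cref{def:effective} for $X\to Y$.

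The main obstacle is precisely this last coherence step. Effectivity of $X\to Z$ only guarantees that $f\circ h$ and $g$ agree after composing down to $Z$, which by itself is far too weak to force $f\circ h\cong g$ over $Y$. What saves the argument is that $f\circ h$ and $g$ already agree on $\cP$, together with the fact that by \Cref{cor hom from spec r sono determinati da hom from P} the restriction along $\cP\to\cT$ is fully faithful, not merely faithful: this upgrades the $2$-isomorphism on $\cP$ to a \emph{unique} $2$-isomorphism over $\cT$, and uniqueness is exactly what makes it automatically compatible with the $2$-isomorphism $\beta$ supplied by effectivity of the composite.
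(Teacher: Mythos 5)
Your proposal is correct and follows essentially the same route as the paper's own proof: compose the square down to $Z$, use effectivity of $X\to Z$ to produce the lift $h:\cT\to X$ together with an isomorphism $h|_{\cP}\cong a$, observe that $f\circ h$ and $g$ then become isomorphic after restriction to $\cP$, and invoke \Cref{cor hom from spec r sono determinati da hom from P} to upgrade this to an isomorphism $f\circ h\cong g$ over $\cT$. Your treatment is simply a more explicit spelling-out of the $2$-isomorphism bookkeeping that the paper leaves implicit.
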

\begin{proof}Consider the following diagram on the left:
$$\xymatrix{\cP\ar[d]_\pi \ar[r] ^a & X\ar[d]^p&\\ \cT\ar[r]^-b &Y\ar[r] & Z} \text{ }\text{ }\text{ }\text{ }\text{ }\text{ }\text{ }\text{ }\xymatrix{\cP\ar[d]_\pi \ar[r] ^a & X\ar[d]^p\\ \cT\ar[r]_-b \ar[ru]^-f&Y}$$
Since $X\to Z$ is effective, there is a map $f:\cT\to X$ such that $a$ and $ f\circ \pi$ are isomorphic. But then $p\circ a$ and $p\circ f \circ \pi$ are isomorphic, and $p\circ a$ is isomorphic to $b\circ \pi$. So by \Cref{cor hom from spec r sono determinati da hom from P} the maps $p\circ f $ and $b$ are isomorphic. In particular the diagram on the right is commutative, so $p$ is effective.\end{proof}
Non separated morphisms can be non-effective, even in the case of algebraic spaces.
\begin{EG}
    Let $R=\mathbb{C}(x^2)[y]_{(y)}$ and consider the \'etale extension of DVRs
    \[R\longrightarrow \mathbb{C}(x)[y]_{(y)}=:R'.\]
    Let $P$ be the pushout of $\spec(K(R'))\to\spec(K(R))$ and $\spec(K(R'))\to\spec(R')$. Then $P\to\spec(R)$ is a morphism of algebraic spaces that is not effective: indeed, if it were effective, we would have a section of the field extension $\mathbb{C}(x^2) \hookrightarrow \mathbb{C}(x)$, which is not the case.
    
    More generally, in order for the morphism $P\to \spec(R)$ to be effective, the residue field of the closed point of $P$ should be $k_R$. This is clearly necessary as otherwise the map
    $P\to \spec(R)$ would not have a section - as before. It is sufficient as if the residue field $k_{R'}$ is isomorphic to $k_R$, then $P=\spec(R)$ from \cite{rydh2011etale}*{Theorem B}. So in particular, $P\to \spec(R)$ is effective if and only if the residue field $k_{R'}\cong k_R$ if and only if $P\to \spec(R)$ is an isomorphism.
\end{EG}
\begin{Lemma}\label{lm:composition}
    Composition of effective morphisms is effective, and being effective is stable under base change.
\end{Lemma}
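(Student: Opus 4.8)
The plan is to verify both assertions directly from \Cref{def:effective}, using the observation that in each case the test data --- a DVR $R$, an \'etale extension $\spec(R')\to\spec(R)$, the associated local bug-eyed cover $P=P_{R,R'}$, and a separated DM gerbe $\cT\to\spec(R)$, together with $\cP=P\times_R\cT$ and the projection $\pi\colon\cP\to\cT$ --- remains fixed; only the target stack and the diagram to be filled change. Thus throughout we reuse the same $\cP$, $\cT$ and $\pi$, and both statements reduce to formal manipulations of lifts, with no new geometric input needed.

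For composition, suppose $\cX\to\cY$ and $\cY\to\cZ$ are effective, and we are given a solid diagram testing effectivity of $\cX\to\cZ$, with top arrow $a\colon\cP\to\cX$ and bottom arrow $c\colon\cT\to\cZ$. First I would form a test diagram for $\cY\to\cZ$ by composing $a$ with $\cX\to\cY$ on top while keeping $c$ on the bottom; since $\cP\xrightarrow{a}\cX\to\cY\to\cZ$ agrees with $\cP\xrightarrow{\pi}\cT\xrightarrow{c}\cZ$ this diagram commutes, and effectivity of $\cY\to\cZ$ produces a lift $g\colon\cT\to\cY$ with $g\circ\pi\cong(\cP\xrightarrow{a}\cX\to\cY)$ and $(\cY\to\cZ)\circ g\cong c$. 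The first of these isomorphisms is precisely the $2$-commutativity that lets me view $(a,g)$ as a test diagram for $\cX\to\cY$; applying effectivity of $\cX\to\cY$ then yields $f\colon\cT\to\cX$ with $f\circ\pi\cong a$ and $(\cX\to\cY)\circ f\cong g$. Finally $(\cX\to\cZ)\circ f\cong(\cY\to\cZ)\circ g\cong c$, so $f$ is the desired filling.

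For base change, let $\cX\to\cY$ be effective, let $\cY'\to\cY$ be arbitrary, and set $\cX'=\cX\times_\cY\cY'$; given a solid test diagram for $\cX'\to\cY'$ with arrows $a\colon\cP\to\cX'$ and $b\colon\cT\to\cY'$, I would compose $a$ with the projection $\cX'\to\cX$ and $b$ with $\cY'\to\cY$ to obtain a commuting test diagram for $\cX\to\cY$. Effectivity then gives $f\colon\cT\to\cX$ with $f\circ\pi\cong(\cP\xrightarrow{a}\cX'\to\cX)$ together with an isomorphism $(\cX\to\cY)\circ f\cong(\cY'\to\cY)\circ b$. This last isomorphism is exactly the datum required by the universal property of the $2$-fibre product to assemble $f$ and $b$ into a morphism $f'\colon\cT\to\cX'$. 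The lower triangle $(\cX'\to\cY')\circ f'\cong b$ then holds by construction, while the upper triangle $f'\circ\pi\cong a$ follows by checking that both sides have the same projections to $\cX$ and to $\cY'$ and invoking that a morphism to $\cX'$ is pinned down by this data.

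The one point demanding care, rather than ingenuity, is the $2$-categorical bookkeeping: every equality above is really a $2$-isomorphism, and in the base-change step one must confirm that the gluing isomorphism used to build $f'$ is compatible with the one inherent in $a$, so that $f'\circ\pi\cong a$ genuinely holds and not merely after forgetting the map to $\cY$. Since all of this is forced by the universal property of the fibre product together with the fully faithfulness recorded in \Cref{cor hom from spec r sono determinati da hom from P}, I expect it to be a short routine verification rather than an essential obstacle.
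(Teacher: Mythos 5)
Your composition argument is correct, and it is essentially the ``straightforward'' argument that the paper omits. The problem is in the base-change half, at exactly the point where the lemma has content. You build $f'\colon\cT\to\cX'$ from the lift $f$ and the isomorphism $\beta\colon(\cX\to\cY)\circ f\cong(\cY'\to\cY)\circ b$ supplied by effectivity, and then assert that $f'\circ\pi\cong a$ follows ``by checking that both sides have the same projections to $\cX$ and to $\cY'$ and invoking that a morphism to $\cX'$ is pinned down by this data.'' That invocation is false: a morphism to the $2$-fibre product $\cX'=\cX\times_\cY\cY'$ is a triple consisting of the two projections \emph{and} a gluing $2$-isomorphism between their images in $\cY$, and two such triples with isomorphic projections are isomorphic only if those isomorphisms intertwine the gluing data. \Cref{def:effective} only hands you \emph{some} isomorphism $\beta$; nothing ensures that $\pi^*\beta$ agrees with the gluing isomorphism inherent in $a$ (transported along the triangle isomorphism $f\circ\pi\cong(\cX'\to\cX)\circ a$ and the $2$-isomorphism of the original test square). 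If it does not, then $f'\circ\pi$ and $a$ have isomorphic projections to $\cX$ and $\cY'$ but are \emph{not} isomorphic as maps to $\cX'$, and your upper triangle fails. This compatibility is the entire difficulty of the statement: if morphisms to a fibre product were determined by their projections, base change would be a triviality.

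Your closing paragraph names the right tool but misstates its role: the compatibility is not ``forced''; rather, $\beta$ must be actively corrected before you build $f'$. Concretely, let $\theta$ denote the isomorphism $\pi^*\bigl((\cX\to\cY)\circ f\bigr)\cong\pi^*\bigl((\cY'\to\cY)\circ b\bigr)$ assembled from the gluing datum of $a$, the triangle isomorphism $f\circ\pi\cong(\cX'\to\cX)\circ a$, and the test-square $2$-isomorphism. Then $\theta\circ(\pi^*\beta)^{-1}$ is an automorphism of $\pi^*\bigl((\cY'\to\cY)\circ b\bigr)$ in $\Hom(\cP,\cY)$; by the full faithfulness of \Cref{cor hom from spec r sono determinati da hom from P} applied to maps into $\cY$, it descends uniquely to an automorphism $\tilde\epsilon$ of $(\cY'\to\cY)\circ b$ in $\Hom(\cT,\cY)$, and building $f'$ from the corrected isomorphism $\tilde\epsilon\circ\beta$ (whose pullback is now $\theta$) makes both triangles commute. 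With that correction your route is sound, and it is genuinely different from the paper's: there, the isomorphism supplied by effectivity is never used to glue. Instead, writing $\xi\colon\cT\to\cX$ and $\zeta\colon\cT\to\cW$ for the two lifts (in the paper's notation, where $\cW$ plays the role of your $\cY'$), the needed gluing isomorphism is produced as a section of $\uIsom_{\cT}(f(\xi),g(\zeta))\to\cT$ by applying effectivity to that morphism itself --- it is affine because $\cY$ has affine diagonal, hence separated, hence effective by \Cref{prop:sep to eff} --- with the given map $\cP\to\cX\times_{\cY}\cW$ providing the section over $\cP$, so that compatibility with the data on $\cP$ is automatic rather than arranged by hand.
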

\begin{proof}
    The first statement is straightforward, we prove the second one.
    For any effective morphism $f:\cX\to\cY$ and any morphism $g:\cW\to\cY$, we have a 2-commutative diagram of solid arrows
    \[
    \begin{tikzcd}
        \cP \ar[r] \ar[d, "p"] & \cX \times_{\cY} \cW \ar[r] & \cX \ar[d, "f"] \\
        \cT \ar[r] \ar[urr, dotted] & \cW \ar[from=u, crossing over] \ar[r, "g"] & \cY
    \end{tikzcd}
    \]
    Effectiveness of $f$ implies that there is a lifting given by the dotted arrow above. In this way, we have well defined morphisms $\xi:\cT\to\cX$ and $\zeta:\cT\to\cW$; to obtain a morphism $\cT\to\cX\times_{\cY}\cW$, the only missing data is an isomorphism $\alpha:f(\xi)\simeq g(\zeta)$ or, in other terms, a section of $\uIsom_{\cT}(f(\xi),g(\zeta))$. 

    The morphism $\cP\to \cX\times_{\cY}\cW$ corresponds to the data $p^*\xi$, $p^*\zeta$ and an isomorphism $p^*\xi \simeq p^*\zeta$. In particular, we have a commutative diagram of solid arrows.
    \[
    \begin{tikzcd}
        \cP \ar[r] \ar[d] & \uIsom_{\cT}(f(\xi),g(\zeta)) \ar[d, "q"] \\
        \cT \ar[r, "\Id"] \ar[ur, dotted] & \cT
    \end{tikzcd}
    \]
    The morphism $q$ is affine because by hypothesis $\cY$ has affine diagonal, hence it is separated and thus effective by \Cref{prop:sep to eff}. This means that we have a lifting given by the dotted arrow in the diagram above, which is the desired section of $\uIsom_{\cT}(f(\xi),g(\zeta))$.  
\end{proof}
We now make a short digression on how effective morphisms are related to the valuative criterion for universally closed morphisms. Recall that if one has an universally of stacks $\cX\to \cY$, then it satisfies the \textit{stacky }valuative criterion for universally closed morphisms \cite{stacks-project}*{\href{https://stacks.math.columbia.edu/tag/0CLW}{Tag 0CLW},\href{https://stacks.math.columbia.edu/tag/0CLX} {Tag 0CLX}}, namely if one has a diagram as the one on the left, then there is a possibly ramified extension of DVRs $\spec(R)\to \spec(R')$ and a diagram as the one on the right:
$$\xymatrix{\spec(K(R))\ar[r] \ar[d] & \cX\ar[d] \\ \spec(R) \ar[r] & \cY} \text{ }\text{ }\text{ }\text{ }\xymatrix{\spec(K(R'))\ar[d] \ar[r] & \spec(K(R))\ar[r] \ar[d] & \cX\ar[d] \\ \spec(R')\ar@{..>}[rru]\ar[r]&\spec(R) \ar[r] & \cY.} $$
It is natural to wonder if, in some specific cases, one can impose some restrictions on the extension $R\subseteq R'$.
\begin{Teo}\label{teo val criterion}
    Assume that $\cX\to \cY$ is effective and universally closed. Then one can assume that $R'=R[t]/(t^n-\pi)$ where $\pi$ is a uniformizer for $R$.
\end{Teo}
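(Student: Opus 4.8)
The plan is to start from the extension produced by the stacky valuative criterion for universally closed morphisms, and then to \emph{replace} that extension by one of the prescribed form $R'=R[t]/(t^n-\pi)$, using effectiveness to descend the lift across the residue-field growth. Concretely, given the solid square
\[
\begin{tikzcd}
\spec(K(R)) \ar[r, "f_{K(R)}"] \ar[d] & \cX \ar[d] \\
\spec(R) \ar[r] & \cY,
\end{tikzcd}
\]
I would first invoke \cite{stacks-project}*{\href{https://stacks.math.columbia.edu/tag/0CLW}{Tag 0CLW},\href{https://stacks.math.columbia.edu/tag/0CLX}{Tag 0CLX}} to obtain a finite extension of DVRs $R\subseteq\tilde R$ (with $K(\tilde R)/K(R)$ finite, possibly ramified) and a lift $h:\spec(\tilde R)\to\cX$ compatible with $f_{K(R)}$ and with the map to $\cY$. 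Let $n$ be the ramification index of $\tilde R/R$ and set $R':=R[t]/(t^n-\pi)$, with $\pi$ a uniformizer of $R$; this is a totally ramified extension, so its residue field is $k_R$, which already yields the last sentence of the statement.

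The heart of the argument is to compare $\tilde R$ and $R'$. Since $k$ has characteristic $0$, the residue field $k_R$ has characteristic $0$, so \emph{every} extension of DVRs over $R$ is tamely ramified. Because $e(R'/R)=n=e(\tilde R/R)$, Abhyankar's lemma shows that the localization $R''$ of the integral closure of $R'$ in the compositum $K(\tilde R)\cdot K(R')$ at a prime over the closed point is \emph{unramified}, hence \'etale, over $R'$; symmetrically $R''$ is \'etale over $\tilde R$. (Equivalently, writing the totally ramified part of $\tilde R$ in Eisenstein form $R^{\mathrm{ur}}[t]/(t^n-u\pi)$, the unit $u$ is absorbed after a further unramified extension, where the presence of all roots of unity in $k$ is convenient; this exhibits $\tilde R$, after an \'etale enlargement, as $R'\otimes_R R^{\mathrm{ur}}$.) Thus $R''$ is a common \'etale extension of $R'$ and of $\tilde R$, with residue field containing $k_{\tilde R}$.

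With $R''$ in hand I would assemble the data for effectiveness over the base $R'$. Pulling back $h$ along $\spec(R'')\to\spec(\tilde R)$ gives $g:\spec(R'')\to\cX$, while pulling back $f_{K(R)}$ along $\spec(K(R'))\to\spec(K(R))$ gives $f':\spec(K(R'))\to\cX$. Over $\spec(K(R''))$ both $g$ and $f'$ restrict to the pullback of $f_{K(R)}$, since $h|_{K(\tilde R)}\cong f_{K(R)}|_{K(\tilde R)}$ by the valuative criterion; this canonical isomorphism glues $g$ and $f'$ into a morphism $P_{R',R''}\to\cX$ over $\cY$, where $P_{R',R''}$ is the local bug-eyed cover of the \'etale extension $R'\subseteq R''$. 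Applying \Cref{def:effective} with the trivial gerbe $\cT=\spec(R')$ (this is exactly the situation of the Remark following the definition) produces the dotted filling
\[
\begin{tikzcd}
P_{R',R''} \ar[r] \ar[d] & \cX \ar[d] \\
\spec(R') \ar[r] \ar[ur, dotted] & \cY,
\end{tikzcd}
\]
a lift $\spec(R')\to\cX$ compatible with $\cY$ and extending $f'$. This is the required filling with $R'=R[t]/(t^n-\pi)$.

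The step I expect to be the main obstacle is the construction of the common \'etale extension $R''$: one must correctly invoke tameness (guaranteed by residue characteristic $0$) and Abhyankar's lemma, and handle the unit appearing in the Eisenstein description of the totally ramified part, so that $\tilde R$ genuinely becomes \'etale over the \emph{standard} extension $R'$. The remaining care is bookkeeping: checking the generic compatibility over $K(R'')$ that allows the bug-eyed cover to be formed, and verifying that effectiveness is applied to the extension $R'\subseteq R''$ rather than to $R\subseteq\tilde R$.
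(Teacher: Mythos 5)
Your proposal is correct and takes essentially the same route as the paper's proof: start from the extension $\tilde R$ produced by the stacky valuative criterion, form the standard totally ramified extension $R[t]/(t^n-\pi)$ with $n$ the ramification index, use Abhyankar's lemma (with tameness automatic in residue characteristic $0$) to obtain a common extension that is \'etale over $R[t]/(t^n-\pi)$, and then apply effectiveness to this \'etale extension to descend the lift. The paper compresses the gluing and descent into a single sentence, whereas you make the bug-eyed cover $P_{R',R''}$ and the trivial-gerbe case of \Cref{def:effective} explicit, but the underlying argument is identical.
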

\begin{proof}
 Let $\spec(R')\to \spec(R)$ an extension such that the map $\spec(K(R'))\to\spec(K(R))\to \cX$ extends along $\spec(R')$, and let $n$ be the ramification index. Consider $\widetilde{R}=R[t]/(t^n - \pi)$. Then $R\hookrightarrow \widetilde{R}$ is totally ramified of order $n$ and by Abhyankar's lemma \cite[\href{https://stacks.math.columbia.edu/tag/0BRM}{Tag 0BRM}]{stacks-project} the localizations of the normalization of $R'\otimes_{R}\widetilde{R}$ are \'etale over $\widetilde{R}$. But since $\cX\to \cY$ is effective, we can extend the map $\spec(K(\widetilde{R}))\to \spec(K(R))\to \cX$ to a map $\spec(\widetilde{R})\to \cX$ as desired.
\end{proof}
\subsection{Preparatory lemmas on Picard groups}
We collect here some technical lemmas that will be needed later.
Recall that a finite group $G$ over a field $k$ is \emph{split} when $|G(k)|=|G(\overline{k})|$, whether a split torus over $k$ is an algebraic group isomorphic to $\bG_{\rm{m},k}^n$. From now one we will use the simplified notation $\Gm^n$ for split tori.
\begin{Lemma}\label{lemma_pic_remains_the_same_after_field_ext}
    Let $k\to k'$ a field extension, and let $G$ be a group over $k$. Assume that $G$ is a central extension of a split torus $\Gm^n$ by a finite split group. Then the pull-back map $\Pic(\cB G)\to \Pic(\cB G_{k'})$ is an isomorphism.
\end{Lemma}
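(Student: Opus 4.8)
The plan is to identify $\Pic(\cB G)$ with the character group of $G$ and then to show that this character group is a fixed finitely generated abelian group, insensitive to the base field. Recall that a line bundle on $\cB G$ is the same datum as a one-dimensional representation of $G$, that is, a character $G\to\Gm$; hence $\Pic(\cB G)\cong\Hom_k(G,\Gm)=:\hat G$, and under this identification the pullback map in the statement becomes the base-change map $\hat G\to\widehat{G_{k'}}$, $\chi\mapsto\chi_{k'}$. Since $\Gm$ is commutative, every character factors through the abelianization, so $\hat G=\widehat{G^{\mathrm{ab}}}$ and likewise over $k'$. It therefore suffices to understand $G^{\mathrm{ab}}$.

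Next I would show that $G^{\mathrm{ab}}$ is a \emph{split diagonalizable} group. Writing the central extension as $1\to F\to G\to\Gm^n\to 1$ with $F$ finite, central and split, the commutator subgroup $[G,G]$ lands in $F$ (the images commute in the torus and $F$ is central), so $G^{\mathrm{ab}}$ is a \emph{commutative} extension
\[
1\to\bar F\to G^{\mathrm{ab}}\to\Gm^n\to 1,\qquad \bar F:=F/(F\cap[G,G]),
\]
with $\bar F$ a finite split group. As we are in characteristic $0$ and $G^{\mathrm{ab}}$ is a quotient of the reductive group $G$, it is reductive and commutative, hence of multiplicative type; since it is an extension of a split torus by a split finite group and $k$ contains all roots of unity, it is in fact split diagonalizable. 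Thus $G^{\mathrm{ab}}=D(M)$ is the diagonalizable group attached to the finitely generated abelian group $M:=\widehat{G^{\mathrm{ab}}}$, which fits into the Cartier-dual exact sequence $0\to\bZ^n\to M\to\widehat{\bar F}\to 0$.

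Finally I would conclude by base change. The formation of the commutator subgroup, hence of the abelianization, commutes with the field extension, so $(G_{k'})^{\mathrm{ab}}=(G^{\mathrm{ab}})_{k'}=D(M)_{k'}$. A split diagonalizable group keeps the same character lattice after any field extension, so both $\hat G$ and $\widehat{G_{k'}}$ are canonically identified with $M$ and the base-change map is the identity; this yields the claimed isomorphism. I expect the main obstacle to be the second step: one must rule out the appearance of \emph{new} characters over $k'$, which amounts to checking that $G^{\mathrm{ab}}$ is already split over $k$ (so that passing to $\overline{k}$, and a fortiori to $k'$, introduces nothing). The possible non-commutativity of $G$ — e.g.\ Heisenberg-type central extensions — is exactly what forces the passage to $G^{\mathrm{ab}}$ rather than arguing with $G$ directly, while the reductivity and roots-of-unity hypotheses are what guarantee that $G^{\mathrm{ab}}$ is of multiplicative type and split.
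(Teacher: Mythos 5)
Your argument is internally sound, but it does not prove the statement the paper actually means and uses, because you have resolved the ambiguous phrase ``central extension of a split torus by a finite split group'' in the opposite direction from the authors. You take the finite group to be the central subgroup and the torus to be the quotient, i.e.\ $1\to F\to G\to \Gm^n\to 1$; the paper means $1\to \Gm^n\to G\to F\to 1$ with the \emph{torus} central and $F$ finite split. This is unambiguous from its proof, which invokes Brion's theorem to produce a finite subgroup $F\subseteq G$ and a surjection $\Gm^n\times F\to G$ with finite kernel $K$ (``as $\Gm^n$ is central, the semidirect product is a product''), and from how the lemma is applied: to constant, possibly non-abelian, finite groups in \Cref{pic_gerbe_depends_only_on_Pic(Z)}, and in \Cref{lemma extend line bundles quotient} to the automorphism groups produced by \Cref{Teo BG effective implies G is a central ext of a torus}, whose component group need not be abelian (Heisenberg-type extensions are allowed there). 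Under your reading the hypothesis forces $G$ to be commutative: $G^0$ surjects onto $\Gm^n$, so $G=G^0\cdot F$, and $G^0$ is central because for fixed $g$ the map $x\mapsto [g,x]$ is a morphism from the connected $G^0$ to the finite $F$, hence trivial. So $G=G^{\mathrm{ab}}$ and your proof only ever treats split diagonalizable groups; it covers none of the non-commutative cases above. Your own remark that Heisenberg-type extensions are ``what forces the passage to $G^{\mathrm{ab}}$'' is symptomatic of the mismatch: under your reading such groups do not satisfy the hypothesis at all, since there the quotient is finite rather than a torus.

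Two further points, one reparative and one cautionary. First, the fix is cheap: your proof never uses centrality of $F$. The containment $[G,G]\subseteq F$ needs only that $G/F\cong\Gm^n$ is abelian; $[G,G]$ and $F/[G,G]$ are again constant (subgroup schemes and quotients of constant group schemes over a field are constant); and the rest of your argument applies verbatim to $G^{\mathrm{ab}}$. You therefore actually prove: if $G$ has a finite split \emph{normal} subgroup $F$ with $G/F$ a split torus, then $\Pic(\cB G)\to\Pic(\cB G_{k'})$ is an isomorphism. That version does cover non-abelian constant groups ($F=G$, $n=0$) and Heisenberg-type groups, so it suffices for most of the paper's uses. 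Second, the direction of the extension is where the real content lies, and your method exposes this while the paper's does not: triviality of the Galois action on $M=X^*(G^{\mathrm{ab}}_{\overline{k}})$ holds because in $0\to\bZ^n\to M\to\widehat{\bar F}\to 0$ the free part is the \emph{sub}module, so the off-diagonal part of the action is a homomorphism from the torsion group $\widehat{\bar F}$ to $\bZ^n$, hence zero. When the torus is instead a subgroup of $G$ the roles of torsion and free part flip, that vanishing argument breaks, and the conclusion genuinely fails: over $k=\bC(t)$ let $E$ be the commutative group generated by $\Gm$ and a central element $\sigma$ with $\sigma^2=t$ (the form of $\Gm\times\bZ/2$ split by $k'=\bC(\sqrt{t})$). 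Then $1\to\Gm\to E\to\bZ/2\to 1$ is a central extension with split finite quotient, yet $\Pic(\cB E)\cong 2\bZ\oplus\bZ/2\to \Pic(\cB E_{k'})\cong\bZ\oplus\bZ/2$ is injective but not surjective. So under the paper's own reading the lemma as stated is false (its diagram chase silently assumes that Brion's subgroup $F$ and the kernel $K$ are split, which is impossible for this $E$), and your reading, or the non-central strengthening above, is the one under which the statement is a theorem; but as a proof of the lemma \emph{as the paper intends and applies it}, your proposal cannot close this gap --- no proof can --- without modifying the hypothesis.
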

\begin{proof}
    The proof relies on the fact that, by assumption, $k$ is of characteristic 0 and contains the roots of 1. This guarantees that the result is true if $G$ is finite split, or if $G\cong \Gm^n$ for some $n$, or if $G$ is of the form $\Gm^n\times F$ for some finite group $F$. Indeed, a line bundle on $\cB G$ is a homomorphism $G\to \Gm$ and, for the aforementioned groups, the set of all such homomorphisms remains invariant under base change.

    In general, from \cite{brionext}, there is a finite subgroup $F$ of $G$, and a surjective homomorphism $\Gm^n\rtimes F \to G$ with finite kernel $K$. Observe that, as $\Gm^n$ is central, the semidirect product is a product. In other terms, $G$ fits in a sequence as follows:
    $$1\to K\to \Gm^n\times F\to G\to 1.$$
    We have then the following diagram:
    $$\xymatrix{0\ar[r] & \Hom(G,\Gm)\ar[r]\ar[d] &\Hom(\Gm^n\times F,\Gm)\ar[d]^\cong\ar[r] & \Hom(K,\Gm)\ar[r] \ar[d]^\cong & \\0\ar[r] & \Hom(G|_{k'},\Gm|_{k'})\ar[r] &\Hom(\Gm^n|_{k'}\times F|_{k'},\Gm|_{k'})\ar[r] & \Hom(K|_{k'},\Gm|_{k'})\ar[r] &}  $$
    The desired result follows from a diagram chase.
\end{proof}

Let us recall our notation. We set $R$ to be a DVR with fraction field $K(R)$ and residue field $k_R$, and we set $\cT\to \spec(R)$ to be a gerbe which is a DM stack. Furthermore, we set $\cU:=\spec(K(R))\times_{\spec(R)}\cT$.
\begin{Lemma}\label{pic_gerbe_depends_only_on_Pic(Z)}
The restriction maps $\Pic(\cT)\to \Pic(\cU)$ is an isomorphism.
\end{Lemma}
\begin{proof}
We need to show that the pullback map is both injective and surjective.

\underline{Surjective.} This follows as we can always extend line bundles from an open non-empty substack of a smooth algebraic stack to the whole algebraic stack.

\underline{Injective.} By definition of gerbe, there exists an \'{e}tale cover of $\spec(R)$ such that the pullback of $\cT$ is a classifying stack. We can assume that such \'{e}tale cover is connected and that it has only one closed point, hence given by $\spec(A)\to\spec(R)$ with $A$ local. 

Moreover, we have $\cT\times_{\spec(R)} \spec(A) \simeq \cB \cG$ for a finite \'{e}tale group scheme $\cG\to\spec(A)$. Up to taking a refined cover, we can assume that $\cG\simeq \spec(A)\times G$ for a constant (hence split) finite group scheme $G$, hence $\cB \cG \simeq \spec(A)\times\cB G$. Observe that $\cU\times_{\spec(R)} \spec(A)$ is isomorphic then to $\spec(K(A))\times\cB G$. 

We have a commutative diagram of pullback homomorphisms
\[
\begin{tikzcd}
    \Pic(\cT) \ar[r] \ar[d] & \Pic(\cU) \ar[d] \\
    \Pic(\spec(A)\times\cB G) \ar[r] & \Pic(\spec(K(A))\times\cB G)
\end{tikzcd}
\]
hence if we prove that the left vertical arrow and the bottom horizontal arrow are injective, we are done.

Observe that the composition $\Pic(\cT) \to \Pic(\spec(A) \times\cB G) \to \Pic(\cB G_{\overline{k}_R})$ (where the last map is the restriction to any geometric closed point of $\spec(A) \times\cB G$)  is injective: indeed by \cite{alp}*{Theorem 10.3} every line bundle on $\cT$ whose restriction to the residual gerbes of every geometric closed point is trivial must come from the good moduli space, which is $\spec(R)$ in our case; but every line bundle on $\spec(R)$ is trivial, from which our conclusion follows. This also implies that $\Pic(\cT) \to \Pic(\spec(A) \times\cB G)$ is injective.

For the second arrow, observe that by \Cref{lemma_pic_remains_the_same_after_field_ext} the composition $\Pic(\cB G)\to\Pic(\spec(A)\times\cB G)\to \Pic(\cB G_{K(A)})$ is an isomorphism, hence $\Pic(\spec(A) \times \cB G) \to \Pic(\cB G_{K(A)})$ is injective, thus concluding the proof.
\end{proof}
\subsection{Effectiveness of quotients of DM stacks by a torus}
We now prove one direction of \Cref{teo intro}.
\begin{Prop}\label{prop_Gm_effective_group}
    The morphism $\cB\Gm^n\to\spec(k)$ is effective.
\end{Prop}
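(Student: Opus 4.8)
The plan is to translate the lifting problem into a question about line bundles and then solve it with the two Picard-group lemmas above, the only nontrivial input being that an \'etale extension of DVRs preserves the uniformizer.

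Since the target is $\spec(k)$, a lift in the defining diagram of effectivity is nothing but a morphism $\cT\to\cB\Gm^n$ together with a $2$-isomorphism between its restriction along $\cP\to\cT$ and the given morphism $\cP\to\cB\Gm^n$. A morphism from any stack $\cZ$ to $\cB\Gm^n$ is the same datum as an $n$-tuple of line bundles on $\cZ$, with $2$-isomorphisms corresponding to $n$-tuples of isomorphisms; hence it suffices to treat one factor at a time, i.e. to work with $\cB\Gm$ and a single line bundle. Recall that $\cP=P\times_{\spec(R)}\cT$ and that, since formation of the bug-eyed cover commutes with base change \cite{rydh2011etale}*{Theorem C}, $\cP$ is the push-out of the open immersion $\cU'\hookrightarrow\cT'$ and the finite \'etale map $\cU'\to\cU$. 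By the universal property of this push-out in the category of algebraic stacks, a morphism $\cP\to\cB\Gm$ is equivalent to the datum of a line bundle $L_\cU$ on $\cU$, a line bundle $L_{\cT'}$ on $\cT'$, and an isomorphism $\phi\colon L_\cU|_{\cU'}\xrightarrow{\sim} L_{\cT'}|_{\cU'}$.

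To build the lift I would first invoke \Cref{pic_gerbe_depends_only_on_Pic(Z)} for the gerbe $\cT\to\spec(R)$: the restriction $\Pic(\cT)\to\Pic(\cU)$ is an isomorphism, so $L_\cU$ extends to a line bundle $M$ on $\cT$, together with a chosen isomorphism $\alpha\colon M|_\cU\xrightarrow{\sim}L_\cU$. This $M$ is the candidate morphism $\cT\to\cB\Gm$. By the universal property above, its restriction to $\cP$ is the triple $(M|_\cU, M|_{\cT'}, \mathrm{can})$, so exhibiting the required $2$-isomorphism amounts to producing an isomorphism $\beta\colon M|_{\cT'}\xrightarrow{\sim} L_{\cT'}$ compatible with $\alpha$ over $\cU'$, i.e. with $\beta|_{\cU'}=\phi\circ\alpha|_{\cU'}=:\psi$. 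Since $\cT'\to\spec(R')$ is again a separated DM gerbe, \Cref{pic_gerbe_depends_only_on_Pic(Z)} applies to it as well, and the mere existence of $\psi$ shows that $M|_{\cT'}$ and $L_{\cT'}$ have the same class in $\Pic(\cT')$; thus some isomorphism $\theta\colon M|_{\cT'}\xrightarrow{\sim}L_{\cT'}$ exists.

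The one genuine point — and the step where \'etaleness is used — is to arrange that $\theta$ restricts to exactly $\psi$ on $\cU'$, rather than to some other trivialization. The indeterminacy is measured by units: because on a gerbe global functions are pulled back from the good moduli space, one has $\Gamma(\cT',\cO^*)=(R')^*$ while $\Gamma(\cU',\cO^*)=K(R')^*$, and $\alpha$ may moreover be rescaled by $\Gamma(\cU,\cO^*)=K(R)^*$. Comparing $\psi$ with $\theta|_{\cU'}$ gives a unit $u\in K(R')^*$ with $\psi=\theta|_{\cU'}\cdot u$, and I must check that $u$ lies in $(R')^*\cdot\im\bigl(K(R)^*\bigr)$, so that it can be absorbed by rescaling $\theta$ by an element of $(R')^*$ and $\alpha$ by an element of $K(R)^*$. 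This is precisely where the hypothesis that $R\to R'$ is \'etale enters: the uniformizer $\pi$ of $R$ is also a uniformizer of $R'$, so $K(R')^*=\pi^{\bZ}\cdot (R')^*$ with $\pi\in K(R)^*$, whence $(R')^*\cdot\im\bigl(K(R)^*\bigr)=K(R')^*$ and the obstruction vanishes. Adjusting $\alpha$ and $\theta$ accordingly produces $\beta$ with $\beta|_{\cU'}=\psi$, which completes the $2$-isomorphism and hence the lift. I expect this units bookkeeping to be the only delicate part; everything else is a direct application of the push-out universal property together with the two Picard lemmas. Note that for a ramified extension this quotient would be $\bZ/e\bZ$ and the argument would genuinely fail, consistent with the role played by \'etaleness throughout the notion of effectivity.
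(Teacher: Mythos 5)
Your proof is correct and follows essentially the same route as the paper's: you encode a map $\cP\to\cB\Gm$ as a triple (line bundle on $\cU$, line bundle on $\cT'$, gluing isomorphism over $\cU'$), descend the bundle to $\cT$ via \Cref{pic_gerbe_depends_only_on_Pic(Z)} applied to both $\cT$ and $\cT'$, and then kill the unit ambiguity using that \'etaleness gives $K(R')^*=\pi^{\bZ}\cdot(R')^*$ with $\pi\in K(R)^*$ — exactly the paper's computation $a\cdot\pi^d u\cdot b=1$ with $a=u^{-1}$, $b=\pi^{-d}$. The only cosmetic difference is the reduction to $n=1$: you split the $n$-tuple of line bundles factorwise, while the paper runs an induction using stability of effectiveness under base change and composition (\Cref{lm:composition}); both are immediate.
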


\begin{proof}
    First we prove the case $n=1$.
Given a diagram of solid arrows as the one below, we want to construct a dotted arrow $g$:
    \begin{equation}\label{eq:BGm eff diag}
    \begin{tikzcd}
        \cP\ar[r] \ar[d, "p"] & \cB\Gm \ar[d] \\
        \cT \ar[r] \ar[ur,dotted, "g"] & \spec(k)
    \end{tikzcd}
    \end{equation}
    
    The data of a map $\cP\to\cB\Gm$ corresponds to a triple $(M_1,M_2,\varphi)$ where $M_1$ (resp. $M_2$) is a line bundle on $\cT'$ (resp. on $\cU$) and $\varphi:M_1|_{\cU'}\simeq M_2|_{\cU'}$ is an isomorphism of line bundles.  An isomorphism between two line bundles is the multiplication by a scalar on each fiber, hence we can think of $\varphi$ as an element of $\Gm(\cU)=\Gm(\spec(K(R')))$, i.e. $\varphi=\pi^du$ where $d$ is an integer, $\pi$ is the uniformizer in $R$ which is also a uniformizer in $R'$ as $R\to R'$ is \'etale, and $u$ is invertible in $R'$.

    The data of a map $g:\cT\to\cB\Gm$ corresponds to a line bundle $M$ on $\cT$, and its pullback $p^*g:\cP\to\cB\Gm$ corresponds to the triple $(M|_{\cT'},M|_\cU,\Id)$. The arrow $g$ makes the diagram (\ref{eq:BGm eff diag}) 2-commutative if and only if there exists an isomorphism of the triple $(M|_{\cT'},M|_\cU,\Id)$ with $(M_1,M_2,\varphi)$, i.e. if there exist isomorphisms $a:M|_\cT'\to M_1$, $b:M|_\cU \to M_2$ such that $b|_{\cU'}\circ\varphi\circ a|_{\cU'} = \Id$. 
    
    First observe that, from \Cref{pic_gerbe_depends_only_on_Pic(Z)}, there is a unique line bundle $M$ on $\cT$ such that $M_2$ is the pull-back of $M$. Similarly,  as the map $\Pic(\cT')\to \Pic(\cU')$ is an isomorphism from \Cref{pic_gerbe_depends_only_on_Pic(Z)}, also the line bundle $M_1$ is the pull back of $M$ to $\cT'$. In particular $M_1\cong M|_{\cT'}$ and $M_2\cong M|_\cU$. So our goal is to find $a\in \Gm(\cT)=\Gm(\spec(R'))$ and $b\in\Gm(\cU)=\Gm(K(R))$ such that $a\cdot \pi^d u\cdot b=1$.  We can pick $a= u^{-1}$ and $b=\pi^{-d}$ to obtain this equality, thus proving that the map $g:\cT\to\cB\Gm$ given by $M$ makes (\ref{eq:BGm eff diag}) commutative.

    For $n\geq 1$, we argue by induction: indeed, there is a cartesian diagram
    \[
    \begin{tikzcd}
        \cB\Gm^n \ar[r] \ar[d] & \cB\Gm \ar[d] \\
        \cB\Gm^{n-1} \ar[r] & \spec(k).
    \end{tikzcd}
    \]
    Both the right vertical arrow and the bottom horizontal arrow are effective by inductive hypothesis, and the left vertical arrow is effective because being effective is stable under base change (\Cref{lm:composition}). As the composition of effective arrows is effective by the same lemma, we obtain the desired conclusion.
\end{proof}

\begin{Prop}\label{prop:BG eff}
Let $G$ be an algebraic group fitting in the short exact sequence
\[1\longrightarrow \Gm^n\overset{i}{\longrightarrow} G\longrightarrow F\longrightarrow 1\]
where the normal subgroup $i(\Gm^n)$ is central and $F$ is finite. Then $\cB G\to\spec(k)$ is effective.
\end{Prop}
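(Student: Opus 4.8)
The goal is to prove that $\cB G \to \spec(k)$ is effective when $G$ is a central extension of a finite group $F$ by a split torus $\Gm^n$. The plan is to leverage \Cref{prop_Gm_effective_group}, which already handles $\cB\Gm^n$, together with the stability properties of effective morphisms proved in \Cref{lm:composition}. The natural strategy is to realize $\cB G$ via a morphism to $\cB F$ or to $\cB(\Gm^n)$ and reduce the effectiveness of $\cB G \to \spec(k)$ to effectiveness statements we already control. \emph{First} I would observe that the short exact sequence $1 \to \Gm^n \to G \to F \to 1$ induces a morphism $\cB G \to \cB F$ whose fibers are torsors under $\cB\Gm^n$; more precisely, the map $G \to F$ gives $\cB G \to \cB F$, and because $\Gm^n$ is central the fiber of this map is $\cB\Gm^n$. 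Since $F$ is finite, $\cB F \to \spec(k)$ is a separated (indeed finite, hence proper and separated) morphism of Deligne--Mumford type, so it is effective by \Cref{prop:sep to eff}.

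\emph{Next}, by \Cref{lm:composition} it suffices to show that the morphism $\cB G \to \cB F$ is effective, since then the composition $\cB G \to \cB F \to \spec(k)$ of two effective morphisms is effective. To analyze $\cB G \to \cB F$, I would unwind the effectiveness condition directly: given a separated DM gerbe $\cT \to \spec(R)$ and a diagram of solid arrows
\[
\begin{tikzcd}
\cP \ar[r] \ar[d, "p"] & \cB G \ar[d] \\
\cT \ar[r] \ar[ur, dotted] & \cB F,
\end{tikzcd}
\]
the bottom arrow $\cT \to \cB F$ is an $F$-torsor over $\cT$, and the data of a lift over $\cP$ amounts to a reduction of structure group to $G$ of the pulled-back torsor, which (because $\Gm^n$ is central) is governed by a $\Gm^n$-gerbe or equivalently by line-bundle data as in the proof of \Cref{prop_Gm_effective_group}. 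The key point is that the obstruction to lifting the $F$-torsor along $G \to F$ is controlled by $\cB\Gm^n$-valued data, and effectiveness for $\cB\Gm^n$ (\Cref{prop_Gm_effective_group}) should allow descent of this data from $\cP$ to $\cT$. Concretely, one can pull everything back along an \'etale cover trivializing the $F$-torsor; over such a cover the problem becomes a descent problem for the $\Gm^n$-data, which is exactly what \Cref{prop_Gm_effective_group} and the Picard-group computations in \Cref{pic_gerbe_depends_only_on_Pic(Z)} resolve.

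\emph{The main obstacle} I anticipate is handling the equivariance/cocycle data correctly: because the torsor data for $\cB G \to \cB F$ is twisted by the $F$-action, one cannot simply apply the split statement for $\cB\Gm^n$ pointwise but must ensure the descent isomorphism obtained over $\cP$ is compatible with the $F$-equivariant structure, i.e.\ satisfies the appropriate cocycle condition after pulling back along the cover of $\cT$ that trivializes the $F$-torsor. This is where the centrality of $\Gm^n$ is essential, since it guarantees that the $F$-action on the fiber $\cB\Gm^n$ is trivial on the relevant Picard groups, so the line-bundle descent from \Cref{prop_Gm_effective_group} is automatically $F$-equivariant. An alternative, and perhaps cleaner, route that avoids the fibration argument would be to use the decomposition $1 \to K \to \Gm^n \times F \to G \to 1$ from \Cref{lemma_pic_remains_the_same_after_field_ext}: realize $\cB G$ through $\cB(\Gm^n \times F) = \cB\Gm^n \times \cB F$, whose effectiveness follows from \Cref{prop_Gm_effective_group}, \Cref{prop:sep to eff}, and base-change stability, and then descend along the finite quotient by $K$; the obstacle there shifts to controlling effectiveness under passage to the quotient by the central finite subgroup $K$.
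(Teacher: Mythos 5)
Your reduction of the statement to ``$\cB G\to\cB F$ is effective'' is legitimate (via \Cref{lm:composition}, since $\cB F\to\spec(k)$ is separated, being $F$ finite, hence effective by \Cref{prop:sep to eff}), and it matches the skeleton of the paper's argument. But the core lifting step has a genuine gap. The obstruction to lifting the given $F$-torsor on $\cT$ to a $G$-torsor is a class in $\oH^2(\cT,\Gm^n)$: by centrality of $\Gm^n$, the lifts form a gerbe over $\cT$ banded by $\Gm^n$, and a lift exists iff that class vanishes. Knowing that a lift exists over $\cP$ only tells you the class dies in $\oH^2(\cP,\Gm^n)$. Effectiveness of $\cB\Gm^n$ (\Cref{prop_Gm_effective_group}) is a degree-one statement --- surjectivity of $\oH^1(\cT,\Gm^n)\to\oH^1(\cP,\Gm^n)$ --- and cannot kill a degree-two obstruction; your phrase ``effectiveness for $\cB\Gm^n$ should allow descent of this data from $\cP$ to $\cT$'' conflates the two degrees. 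The paper supplies exactly the missing ingredient: it proves that $\oH^2(\cT,\Gm^n)\to\oH^2(\cP,\Gm^n)$ is \emph{injective}, by factoring through $\oH^2(\cU,\Gm^n)$ and using that $\cT$ is regular, so its cohomological Brauer group injects into that of its generic point (Grothendieck for schemes, and Lieblich's argument for DM stacks). With both that injectivity and the $\oH^1$-surjectivities for $\Gm^n$ and $F$, the paper runs a diagram chase in Giraud's two exact sequences of pointed sets, using the action of $\oH^1(\cdot,\Gm^n)$ on the fibers of $\oH^1(\cdot,G)\to\oH^1(\cdot,F)$ to adjust the lift so that it restricts to the given one on $\cP$. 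Without the $\oH^2$-injectivity your argument does not close.

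Two further steps in your plan would also fail as written. First, ``pull everything back along an \'etale cover trivializing the $F$-torsor'' is precisely the kind of move that is forbidden here: effectiveness is not \'etale-local on the target, and moreover the pullback $\cT'$ of $\cT$ along such a cover need not be a separated DM gerbe over $\spec(R)$ at all (e.g.\ the trivial $F$-torsor over $\cT$ is disconnected), so \Cref{prop_Gm_effective_group} does not literally apply; the cocycle condition you dismiss as ``automatically $F$-equivariant by centrality'' is exactly the $\oH^2$-obstruction above --- centrality makes the obstruction class well defined in $\oH^2(\cT,\Gm^n)$, it does not make it vanish. Second, in the alternative route via $1\to K\to\Gm^n\times F\to G\to 1$, the map $\cB(\Gm^n\times F)\to\cB G$ is a gerbe banded by the finite group $K$, but it is not of the product form $\cX\times_{\cY}\cZ$ required by \Cref{lemma_tensoring_with_gerbe_is_effective_implies_effective}, and \Cref{cor composition effective implies the first one effective} applied to $\cB(\Gm^n\times F)\to\cB G\to\spec(k)$ only yields effectiveness of the \emph{first} arrow, not the one you need.
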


\begin{proof}
    Due to the fact that $\Gm^n$ is central in $G$, we have from \cite{giraud2020cohomologie}*{ Proposition III.3.3.1; Remarque
IV.4.2.10}, two long exact sequences
    \[
    \begin{tikzcd} \oH^1(\cT,\Gm^n)  \ar[r, "j_\cT"] \ar[d, "p_1"]&
        \oH^1(\cT,G) \ar[r, "i"] \ar[d, "p_2"] & \oH^1(\cT,F) \ar[d, "p_3"] \ar[r, "\delta_\cT"] & \oH^2(\cT,\Gm^n) \ar[d, "p_4"]\\ \oH^1(\cP, \Gm^n) \ar[r, "j_\cP"] &
        \oH^1(\cP, G) \ar[r, "\iota"] & \oH^1(\cP, F) \ar[r, "\delta_\cP"] & \oH^2(\cP, \Gm^n).
    \end{tikzcd}
    \]
We recall that, as $F$ might not be abelian, $\oH^1(\cdot, G)$ and $\oH^1(\cdot, F)$ might not be groups, but only pointed sets. None the less, there is a functorial action of $\oH^1(\cdot,\Gm^n)$ on $\oH^1(\cdot, G)$.  This action is compatible with the exact sequence above, in the sense that given $x\in \oH^1(\cdot,\Gm^n)$ and $y \in \oH^1(\cdot,G)$, if we denote by $x*y$ the action of $x$ on $y$, then $\iota(x*y)=\iota(y)$; see \cite{giraud2020cohomologie}*{ III.3.4.5}.

Observe now that:
\begin{itemize}
\item $p_1$ and $p_3$ are surjective as $\cB \Gm^n$ and $\cB F$ are effective, and
\item $p_4$ is injective.
\end{itemize} 
Indeed the first bullet point follows from \Cref{prop_Gm_effective_group},  and the fact that $F$ is finite, so $\cB F$ is separated hence effective from \Cref{prop:sep to eff}. For the second bullet point, observe that the we have a commutative diagram
    \[
    \begin{tikzcd}
    \oH^2(\cT,\Gm^n) \ar[r] \ar[d] & \oH^2(\cU,\Gm^n) \ar[d, "\simeq"] \\
    \oH^2(\cP, \Gm^n) \ar[r] & \oH^2(\cU,\Gm^n)
    \end{tikzcd}
    \]
    and the right vertical arrow is an isomorphism. The homomorphism $\oH^2(\cT,\Gm)\to \oH^2(\cU,\Gm)$ is injective: indeed, as $\cT$ is regular, we have that $\oH^2(\cT,\Gm)$ is isomorphic to the Brauer group, and the Brauer group of $\cT$ injects into the Brauer group of its generic point. This is true if $\cT$ is a  scheme from \cite{grothendieck1968groupe}*{Corollarie 1.8}, whereas in the case where $\cT$ is a DM stack it suffices to observe that the same argument of \cite[Proposition 3.1.3.3]{lieblich2008twisted} goes through. Then the top horizontal arrow in the last diagram is injective, hence also the left vertical arrow is injective. 

Now the surjectivity of $p_2$, which is equivalent to $\cB G$ being effective, follows from a diagram chase of the first diagram of pointed sets.  We report it below. 

Let $\alpha \in \oH^1(\cP,G)$. Since $p_3$ is surjective, there is $b\in \oH^1(\cT,F)$ such that $p_3(t)=\iota(\alpha)$. But $\delta_P(\iota(\alpha))=0 = p_4(\delta_\cT(b))$. Since $p_4$ is injective, $\delta_\cT(b)=0$ so $b=i(a)$ for $a\in \oH^1(\cT,G)$. Now the two elements $p_2(a)$ and $\alpha$ map to the same element in $\oH^1(\cP,F)$. In particular, there is $\xi\in \oH^1(\cP,\Gm^n)$ such that $\xi*p_2(a) = \alpha$. But $p_1$ is surjective, so there is $x\in \oH^1(\cT,\Gm^n)$ which maps to $x$. Hence $p_2(x*a)=\alpha$, as desired.
\end{proof}
We are ready to prove one of the two directions of our main result.
\begin{theorem}\label{teo quotient implies effective}
    Assume that $\cX$ admits a DM morphism $\phi:\cX\to \cB \Gm^n$, and a separated good moduli space $X$. Then the good moduli space map $\cX\to X$ is effective.
\end{theorem}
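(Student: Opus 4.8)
The plan is to deduce effectiveness of the good moduli space map $\pi\colon\cX\to X$ from the effectiveness of $\cB\Gm^n$, by exploiting the two factorizations of the structure morphism $\cX\to\spec(k)$. On one hand it factors through the good moduli space as $\cX\xrightarrow{\pi}X\to\spec(k)$; on the other hand it factors through the classifying stack as $\cX\xrightarrow{\phi}\cB\Gm^n\to\spec(k)$. I will first show that the structure morphism is effective using the second factorization, and then transfer this conclusion to $\pi$ using the cancellation property of \Cref{cor composition effective implies the first one effective}.

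First I would note that $\phi\colon\cX\to\cB\Gm^n$ is separated: this is the standing hypothesis on the morphism to $\cB\Gm^n$ (a separated, relatively DM morphism, as stated in the introduction). By \Cref{prop:sep to eff} every separated morphism is effective, so $\phi$ is effective. Next, \Cref{prop_Gm_effective_group} gives that $\cB\Gm^n\to\spec(k)$ is effective. Since the composition of effective morphisms is effective (\Cref{lm:composition}), the composite $\cX\xrightarrow{\phi}\cB\Gm^n\to\spec(k)$ is effective; but this composite is precisely the structure morphism $\cX\to\spec(k)$.

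It remains to descend this to $\pi$. The structure morphism also equals the composite $\cX\xrightarrow{\pi}X\to\spec(k)$, which we have just shown to be effective. Applying \Cref{cor composition effective implies the first one effective} with $(\cX, X, \spec(k))$ in place of $(X,Y,Z)$, I conclude that the first factor $\pi\colon\cX\to X$ is effective, which is exactly the assertion.

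The only point demanding care—the main, and rather mild, obstacle—is the effectiveness of $\phi$, which I reduce to its separatedness via \Cref{prop:sep to eff}. I take separatedness of $\phi$ as given, since the affineness of the diagonals of $\cX$ and $\cB\Gm^n$ alone only yields that the relative diagonal $\Delta_\phi$ is affine and unramified, which does not by itself force properness. I also note that separatedness of the good moduli space $X$ plays no role in this direction of the equivalence; it is needed only for the converse. All the remaining steps are formal consequences of the composition and cancellation properties of effective morphisms established in \Cref{lm:composition} and \Cref{cor composition effective implies the first one effective}.
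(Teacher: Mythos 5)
There is a genuine gap, and it sits exactly at the point you flag as ``taken as given'': the separatedness of $\phi$. The theorem you are proving does not assume $\phi$ separated --- it assumes only that $\phi$ is DM and that the good moduli space $X$ is separated (the introduction's phrasing is looser, but \Cref{teo quotient implies effective} is the statement to prove). Deriving separatedness of $\phi$ from these hypotheses is the actual content of the paper's proof, and it is precisely where separatedness of $X$ enters; so your closing remark that separatedness of $X$ ``plays no role in this direction'' is not just inaccurate, it is a symptom of the missing step. The paper argues as follows: let $\cY\to\cX$ be the $\Gm^n$-torsor associated to $\phi$; it is affine, hence S-complete by \cite[Proposition 3.42]{AHH}; the morphism $\cX\to X$ is S-complete because $\cX$ admits a separated good moduli space; hence the composition $\cY\to X$ is S-complete, so $\cY$ is an S-complete DM stack (it is DM because $\phi$ is DM), and a DM stack is S-complete if and only if it is separated. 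Separatedness of $\phi$ then follows, since its base change along the cover $\spec(k)\to\cB\Gm^n$ is $\cY\to\spec(k)$. Without this step, your proof establishes the theorem only under an extra hypothesis, which --- as you yourself note when observing that affine plus unramified diagonal does not force properness --- cannot be waved away.

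Once separatedness of $\phi$ is in hand, the rest of your argument is correct and only mildly different in routing from the paper's: you compose $\cX\xrightarrow{\phi}\cB\Gm^n\to\spec(k)$ (effective by \Cref{prop:sep to eff}, \Cref{prop_Gm_effective_group} and \Cref{lm:composition}) and then cancel via \Cref{cor composition effective implies the first one effective} applied to $\cX\to X\to\spec(k)$. The paper instead factors $\cX\to X\times\cB\Gm^n\to X$, using that $\cX\to X\times\cB\Gm^n$ is separated (hence effective) and that $X\times\cB\Gm^n\to X$ is a base change of the effective morphism $\cB\Gm^n\to\spec(k)$, so only composition (not cancellation) is needed. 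Both routes are valid given the paper's lemmas; the defect of your write-up is solely the unproved separatedness of $\phi$.
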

\begin{proof}
    Let $\cY\to \cX$ be the $\Gm^n$-torsor associated to $\phi$. Then $\cY\to \cX$ is an affine morphism, so it is also S-complete from \cite[Proposition 3.42]{AHH}. So the composition
    $\cY\to \cX \to X$ is S-complete since being S-complete is stable under compositions, and $\cX$ is S-complete as it admits a separated good moduli space. But then $\cY$ is separated, since a DM stack is S-complete if and only if it is separated. In particular, the morphism $\phi$ is separated. So also from \cite[\href{https://stacks.math.columbia.edu/tag/04YV}{Tag 04YV}]{stacks-project} the morphism $\cX\to X\times \cB \Gm^n$ is separated. In particular, it is effective from \Cref{prop:sep to eff}; since $\cB \Gm^n\to \spec(k)$ is effective also $X\times \cB \Gm^n\to X$ is effective from \Cref{lm:composition}. Since composition of effective morphisms is effective, also $\cX\to X$ is effective as desired.
\end{proof}
\begin{Def}\label{def effective group}
    We say that a group $G$ is effective if $\cB G\to\spec(k)$ is effective.
\end{Def}
\begin{Lemma}\label{lemma_tensoring_with_gerbe_is_effective_implies_effective}
    Let $\cX\to \cY $ a morphism of algebraic stacks, and let $\cZ\to \cY$ a separated morphism which is a gerbe. If $\cX\times_{\cY} \cZ\to \cZ$ is effective, then $\cX\to \cY$ is effective.
\end{Lemma}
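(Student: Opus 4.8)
The plan is to reduce effectiveness of $\cX\to\cY$ to the assumed effectiveness of $\cX\times_\cY\cZ\to\cZ$ by base-changing a test diagram along the gerbe $\cZ\to\cY$ and then descending the resulting lift back down the gerbe. So I would fix an \'etale extension $R\to R'$ of DVRs, a separated DM gerbe $\cT\to\spec(R)$, the local bug-eyed cover $P=P_{R,R'}$, and a square of solid arrows $a\colon\cP=P\times_R\cT\to\cX$, $b\colon\cT\to\cY$ as in \Cref{def:effective}, and must produce the diagonal $\cT\to\cX$. Writing $\cX_\cZ:=\cX\times_\cY\cZ$, I form $\cT_\cZ:=\cT\times_\cY\cZ$ using $b$. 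Since $\cZ\to\cY$ is a separated gerbe, so is its base change $\cT_\cZ\to\cT$; composing with the DM gerbe $\cT\to\spec(R)$ and using that a composition of gerbes is again a gerbe, one checks that $\cT_\cZ\to\spec(R)$ is a separated DM gerbe (this is where one uses that the band of $\cZ\to\cY$ is finite, so that $\cT_\cZ$ stays Deligne--Mumford).

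Next I would assemble the base-changed test diagram. Because forming $P$ commutes with base change and $\cT_\cZ\to\cT$ factors the projection to $\spec(R)$, there is a canonical identification $P\times_R\cT_\cZ\cong\cP\times_\cY\cZ$, and $a$ induces $\cP\times_\cY\cZ\to\cX\times_\cY\cZ=\cX_\cZ$. Together with the projection $\cT_\cZ\to\cZ$ this is exactly a test diagram for the effective morphism $\cX_\cZ\to\cZ$ with the separated DM gerbe $\cT_\cZ\to\spec(R)$. Applying the hypothesis yields a lift $\ell\colon\cT_\cZ\to\cX_\cZ$ over $\cZ$ whose restriction to $P\times_R\cT_\cZ$ agrees with the map induced by $a$; composing with the projection $\cX_\cZ\to\cX$ gives $\tilde\ell\colon\cT_\cZ\to\cX$ over $\cY$.

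The heart of the argument, and the main obstacle, is to descend $\tilde\ell$ along the gerbe $\pi\colon\cT_\cZ\to\cT$ to the desired map $\cT\to\cX$. Descending a morphism to a stack along a gerbe is possible precisely when it kills the band, equivalently when the two pullbacks of $\tilde\ell$ to $\cT_\cZ\times_\cT\cT_\cZ$ are equipped with a cocycle-compatible isomorphism; this is not automatic, since $\ell$ being a morphism over $\cZ$ does not force its $\cX$-component on band automorphisms to be trivial. The point is that it does hold over the generic fibre: writing $\cU=\spec(K(R))\times_R\cT$ and noting that $P\to\spec(R)$ is an isomorphism over $K(R)$, the restriction of $\tilde\ell$ to $\cU\times_\cY\cZ$ coincides with the pullback of the generic lift $a|_{\cU}\colon\cU\to\cX$ extracted from $a$, which manifestly descends. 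Thus the required descent isomorphism exists over the schematically dense generic fibre, and by \Cref{lemma_extension_isom_from_open_of_regular_onedim} (applied on an \'etale atlas of $\cT$, over which the relevant $\uIsom$ is affine, hence proper and separated) it extends uniquely to all of $\cT_\cZ\times_\cT\cT_\cZ$; the uniqueness in the same lemma upgrades the cocycle condition from the generic fibre to everywhere. Descent then produces $h\colon\cT\to\cX$ with $\pi^*h\cong\tilde\ell$.

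Finally I would check that $h$ solves the original filling problem. It lies over $\cY$ and lifts $b$ because $\tilde\ell$ does and the descent is over $\cY$. For compatibility with $a$, the pullbacks of $h\circ p$ and of $a$ to $\cP\times_\cY\cZ=P\times_R\cT_\cZ$ agree by construction of $\ell$; since $\cP\times_\cY\cZ\to\cP$ is a gerbe, hence an epimorphism, pullback along it is fully faithful on maps into the stack $\cX$, so $h\circ p\cong a$ on $\cP$ (this can also be phrased through \Cref{cor hom from spec r sono determinati da hom from P}). Hence $h$ is the desired diagonal and $\cX\to\cY$ is effective. The main difficulty, as indicated, is the gerbe descent of $\tilde\ell$, resolved by producing the descent datum over the dense generic fibre and propagating it by the valuative extension and uniqueness of \Cref{lemma_extension_isom_from_open_of_regular_onedim}; a secondary point to be careful about is verifying that $\cT_\cZ\to\spec(R)$ is genuinely a separated DM gerbe so that the effectiveness hypothesis applies.
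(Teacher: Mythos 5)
Your first half follows the paper: you base change the test diagram along the gerbe (your $\cT_\cZ$ is the paper's $\cZ$ after its reduction to $\cY=\cT$), observe that $\cT_\cZ\to\spec(R)$ is again a separated DM gerbe, apply effectiveness of $\cX\times_\cY\cZ\to\cZ$ to get a lift $\ell$, and are then left with descending $\tilde\ell\colon\cT_\cZ\to\cX$ along the gerbe $\cT_\cZ\to\cT$. You correctly identify this descent as the heart of the argument, and your observation that the descent datum exists over the generic fibre is fine. The gap is in how you propagate it to the closed fibre: you invoke \Cref{lemma_extension_isom_from_open_of_regular_onedim}, justifying this by saying the relevant $\uIsom$ is ``affine, hence proper and separated.'' Affine morphisms are separated but essentially never proper (affine and proper means finite), and \Cref{lemma_extension_isom_from_open_of_regular_onedim} genuinely needs properness of $\uIsom$ over the base, which in that lemma comes from the hypothesis that the target morphism is \emph{separated}. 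Here the relevant $\uIsom$ is the pullback of $\delta_{\cX/\cY}$, and $\cX\to\cY$ is not assumed separated --- indeed it cannot be, since separated morphisms are already effective by \Cref{prop:sep to eff} and the lemma would be vacuous. With $\uIsom$ merely affine, a section over the generic fibre of a DVR need not extend across the closed point, so your extension-plus-uniqueness step collapses; and the failure of exactly this kind of extension is what makes effectiveness a nontrivial condition in the first place. (The same issue resurfaces in your final step, where you claim pullback along the gerbe $\cP\times_\cY\cZ\to\cP$ is \emph{fully} faithful on maps to $\cX$: fullness again requires descending an isomorphism along a gerbe, which is not automatic.)

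The paper avoids this trap by proving and using a different descent criterion, \Cref{lemma_auxiliary_almost_garbage}: a morphism $f$ from a gerbe $\pi\colon\cZ\to\cT$ factors through $\cT$ if and only if, at every geometric point, $\operatorname{Ker}(\Aut(p)\to\Aut(\pi(p)))\subseteq\operatorname{Ker}(\Aut(p)\to\Aut(f(p)))$. That lemma is proved via relative coarse moduli spaces and only requires separatedness of the \emph{source} stacks $\cZ$ and $\cT$ (which holds here), never of $\cX\to\cY$; the kernel condition is then verified pointwise using the fact that $\cP\times\cZ\to\cZ$ induces isomorphisms on automorphism groups and that the lifted map agrees with one factoring through $\cP$. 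To repair your argument you would need to replace the $\uIsom$-extension step by some such pointwise automorphism-group criterion; as written, the descent step does not go through.
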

We will use the following auxiliary lemma.
\begin{Lemma}\label{lemma_auxiliary_almost_garbage}
    Let $\pi:\cX\xrightarrow{} \cX'$ a gerbe over a DM stack $\cX'$, and let $f:\cX\xrightarrow{} \cY$ be a morphism. Assume that $\cX$ and $\cX'$ are separated. Then for every geometric point $p\in \cX$ one has $$\operatorname{Ker}(\Aut_\cX(p)\to \Aut_{\cX'}(\pi(p)))\subseteq \operatorname{Ker}(\Aut_\cX(p)\to \Aut_{\cY}(f(p)))$$
    if and only if there is a map $\phi:\cX'\to \cY$ such that $\phi\circ\pi$ and $f$ are isomorphic.
\end{Lemma}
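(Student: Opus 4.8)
The plan is to recognize $\pi:\cX\to\cX'$ as a rigidification and then invoke its universal property. First I would observe that, since $\cX$ is separated its diagonal is proper, and being also affine by our standing conventions it is finite; hence the inertia $I_{\cX}\to\cX$ is finite, and in characteristic $0$ it is finite étale. The relative inertia $K:=I_{\cX/\cX'}$, defined as the kernel of $I_{\cX}\to\pi^*I_{\cX'}$, is therefore a finite étale group scheme over $\cX$, and being a kernel it is a normal subgroup stack of the inertia; its fibre over a geometric point $p$ is exactly $\Ker(\Aut_\cX(p)\to\Aut_{\cX'}(\pi(p)))$. Because $\pi$ is a gerbe banded by $K$, the morphism $\pi$ identifies $\cX'$ with the rigidification of $\cX$ along $K$, compatibly with $\pi$; concretely, $\pi$ is initial among morphisms out of $\cX$ that trivialize $K$.

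The forward implication is then immediate from functoriality of inertia. If $\phi:\cX'\to\cY$ satisfies $f\simeq\phi\circ\pi$, then for every geometric point $p$ the homomorphism $\Aut_\cX(p)\to\Aut_\cY(f(p))$ factors as $\Aut_\cX(p)\xrightarrow{\pi}\Aut_{\cX'}(\pi(p))\xrightarrow{\phi}\Aut_\cY(f(p))$. Hence every element of $K_p=\Ker(\Aut_\cX(p)\to\Aut_{\cX'}(\pi(p)))$ is sent to the identity, giving the desired inclusion of kernels.

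For the converse, the hypothesis $K_p\subseteq\Ker(\Aut_\cX(p)\to\Aut_\cY(f(p)))$ for every geometric point $p$ says precisely that the morphism of group stacks $K\to f^*I_\cY$ induced by $f$ is trivial, i.e. that $f$ trivializes $K$. This is exactly the condition required by the universal property of the rigidification $\pi:\cX\to\cX'$, which therefore produces a morphism $\phi:\cX'\to\cY$, unique up to unique $2$-isomorphism, together with an isomorphism $\phi\circ\pi\simeq f$. This completes both directions.

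I expect the main obstacle to be the bookkeeping around the universal property: pinning down the precise version of rigidification whose factorization hypothesis matches our \emph{kernel-of-automorphisms} condition, and checking that its hypotheses hold, namely that $K$ is flat and finitely presented and normal in $I_{\cX}$. This is where separatedness of $\cX$ and the characteristic-$0$ assumption enter, guaranteeing that $K$ is finite étale rather than merely a closed subgroup stack. A secondary point to verify carefully is the identification \emph{gerbe $=$ rigidification by the relative inertia}, i.e. that the given $\pi$ is canonically $2$-isomorphic to the rigidification map, so that the universal property applies to $\pi$ itself rather than to an abstractly isomorphic morphism; once this compatibility is in place, the equivalence follows formally.
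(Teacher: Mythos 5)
Your route --- identify $\cX'$ with the rigidification of $\cX$ along the relative inertia $K=I_{\cX/\cX'}$ and invoke its universal property --- is genuinely different from the paper's, which instead forms $(\pi,f)\colon\cX\to\cX'\times\cY$, takes its relative coarse moduli space using \cite{AOV}*{Theorem 3.1}, and checks \'etale-locally on $\cX'$ that this relative coarse space is $\cX'$ itself. Your plan could likely be completed, but as written it has a genuine gap at precisely the step that carries the content of the lemma. The universal property of rigidification lets $f$ factor through $\cX\sslash K$ only if $f$ kills $K$ \emph{as a subgroup stack of the inertia}: for every scheme $T$ and every object $x\in\cX(T)$ one needs $K_x(T)\subseteq\Ker(\Aut_T(x)\to\Aut_T(f(x)))$. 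The hypothesis of the lemma gives this only at geometric points, and your assertion that it ``says precisely'' that $K\to f^*I_{\cY}$ is trivial conflates the two conditions. The passage from geometric points to arbitrary (possibly non-reduced) test schemes is not formal: the locus in $K$ where the two maps $K\rightrightarrows f^*I_{\cY}$ (the induced one and the trivial one) agree is a closed substack containing all geometric points, which a priori only gives the condition on the reduction of $K$; since $\cY$ is not assumed DM, $f^*I_{\cY}\to\cX$ is not unramified and this locus need not be open. Closing the gap requires a real argument --- e.g.\ that $K$ is finite \'etale, hence \'etale-locally constant, so the maps in question are torsion sections of a separated group scheme vanishing at all geometric points, which in characteristic $0$ forces them to vanish identically --- and some such input is unavoidable, since the pointwise statement fails for infinitesimal group schemes in positive characteristic. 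This pointwise-to-global bridge is exactly what the paper's proof supplies through \cite{conrad2007arithmetic}*{Theorem 2.2.5} (representability can be tested on automorphism groups of geometric points).

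Two further points. First, your justification of the flatness of $K$ (a hypothesis of the rigidification theorem) is incorrect: finiteness of $I_{\cX}$ does not give \'etaleness, even in characteristic $0$, because inertia need not be flat --- the inertia of $[\bA^1/\bmu_2]$ is finite and non-flat. What is true is that the \emph{relative} inertia of a gerbe is always flat and locally of finite presentation (it is a base change of the diagonal of $\pi$, which is fppf for a gerbe); combined with finiteness (from separatedness of $\cX$ plus affine diagonal) and characteristic $0$, this does make $K$ finite \'etale. Second, the identification $\cX'\simeq\cX\sslash K$ compatible with $\pi$, which you flag but defer, itself requires a representability-from-pointwise-injectivity argument of the same Conrad type; so even on your route the lemma does not follow formally from the universal property alone.
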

\begin{proof}
If there is a map $\phi$ as above, the inclusion of the kernels is clear. We focus on the other direction.

    Consider $(\pi,f):\cX\to \cX'\times \cY$. As the composition $\cX\to \cX'\times \cY\to \cX'$ is separated, and by our conventions all the relative diagonals are separated, also $(\pi,f)$ is separated. From \cite{AOV}*{Theorem 3.1} the map $(\pi,f)$ admits a relative coarse moduli space $\cX^{r.c.}$. So one has a factorization $\cX\xrightarrow{\alpha}\cX^{r.c.}\xrightarrow{\beta}\cX'\times \cY$; let $p_1:\cX'\times\cY\to \cX'$ be the first projection. Observe that for every geometric point $p$ of $\cX$:
    \begin{enumerate}
    \item as $\alpha$ is a relative coarse moduli space, we can (and will) identify (via $\alpha$) the geometric points of $\cX$ and $\cX'$,
        \item $\Aut_\cX(p)\to \Aut_{\cX^{r.c.}}(\alpha(p))$ is surjective,
        
        \item $ \Aut_{\cX^{r.c.}}(p)\to \Aut_{\cX'}((p_1\circ\beta)(p))$ is injective,
        \item $\cX^{r.c.}\to \cX'$ is bijective on geometric points as $\cX\to \cX'$ and $\cX\to \cX^{r.c.}$ are such.
    \end{enumerate}
    Point (3) follows easily from the containment of the two kernels in the assumptions and point (2).

    In particular the map $\Aut_{\cX^{r.c.}}(p)\to \Aut_{\cX'}(p)$ is injective for every $p$, so from \cite{conrad2007arithmetic}*{Theorem 2.2.5} it is representable. We show that $\cX^{r.c.}\to \cX'$ is an isomorphism \'etale locally on $\cX'$.
    
    Consider $U\to \cX'$ an \'etale atlas which is a scheme. We can pull back all the maps constructed before:
    $$\cX\times_{\cX'}U\to \cX^{r.c.}\times_{\cX'}U\to U\times\cY\to U.$$
    We need to show that $\gamma:=(p_1\circ\beta)|_{\cX^{r.c.}\times_{\cX'}U}:\cX^{r.c.}\times_{\cX'}U\to U$ is an isomorphism.
    As $\cX^{r.c.}\to \cX'$ is representable, $\cX^{r.c.}\times_{\cX'}U$ is an algebraic space.
    Moreover, as $\cX\to \cX'$ is a gerbe, $\cX\times_{\cX'}U\to U$ is the coarse moduli space map. In particular, from the universal property of the coarse moduli space, the map $\cX\times_{\cX'}U\to \cX^{r.c.}\times_{\cX'}U$ factors as $$\cX\times_{\cX'}U\to U\xrightarrow{\delta}\cX^{r.c.}\times_{\cX'}U$$
    and $\gamma\circ\delta=\Id$ from the universal property of the coarse moduli space. So $\delta$ has a left inverse.
        
    Similarly, the construction of the \textit{relative} coarse moduli space commutes with \'etale base change from \cite{AOV}*{Theorem 3.1}, so $\cX^{r.c.}\times_{\cX'}U$ is the relative coarse moduli space of $\cX\times_{\cX'}U\to  U\times \cY$. We can use the universal property of the relative coarse moduli space, exactly as before we were using the universal property of the coarse moduli space. In particular, the diagram $\cX\times_{\cX'}U\to U \xrightarrow{\delta} \cX^{r.c.}\times_{\cX'}U\to  U\times\cY$ factors as 
    $$ \cX\times_{\cX'}U\to \cX^{r.c.}\times_{\cX'}U \xrightarrow{\gamma'} U \xrightarrow{\delta} \cX^{r.c.}\times_{\cX'}U\to  U\times\cY.$$
    Hence $\delta\circ\gamma' = \Id$. So $\delta$ has both a right inverse and a left inverse, so $\gamma = \gamma'$ and $\gamma$ and $\delta$ are isomorphisms.
\end{proof}

\begin{proof}[Proof of \Cref{lemma_tensoring_with_gerbe_is_effective_implies_effective}]As usual, we need to check that a morphism $\cT\to \cY$ which lifts to $\cP\to \cX$, lifts to $\cT\to\cX$. We can pull back $\cX\to \cY$ via $\cT\to \cY$, so we can and will assume that $\cY=\cT$. Recall that $\cT$ is separated.

As all the fiber products will be over $\cT$, we will omit the subscript $\cT$ in $\times_\cT$. First observe that $\cZ\to \spec(R)\text{ is a DM gerbe}$, as it is a composition of the two gerbes $\cZ\to \cY=\cT$ and $\cT\to \spec(R)$.

Hence, the morphism $\cP\to \cX$ induces $\cP\times\cZ\to \cX\times\cZ$. But $\cX\times \cZ\to \cZ$ is effective, so in  the diagram below the map $\alpha$ has a lift
$$\xymatrix{ & \cX\times \cZ\ar[dr]\ar[d]_\alpha & \\ \cP\times \cZ\ar[ur]\ar[r]\ar[dr] & \cZ\ar[dr] & \cX\ar[d]\\ & \cP\ar[ur]\ar[r] & \cT.}$$

Consider then the maps $f:\cZ\to \cX\times \cZ\to \cX$
and $\pi:\cZ\to \cT$. The map $p_2:\cP\times \cZ\to \cZ$ is surjective and for every geometric point $p\in \cP\times \cZ$ it induces an isomorphism $\Aut_{\cP\times \cZ}(p)\to \Aut_{\cZ}(p_2(b))$. Hence to understand
$f:\Aut_{\cZ}(\pi_2(p))\to \Aut_{\cX}(f(\pi_2(p)))$ and $\pi:\Aut_{\cZ}(\pi_2(p))\to \Aut_{\cT}(\pi(\pi_2(p)))$,
one can understands the corresponding maps for $\cP\to \cZ\to \cX\times \cZ \to \cX$ and $\cP\times \cZ\to \cP$. 
Now as the map $\cP\times \cZ\to \cX\times \cZ\to \cX$ factors as $\cP\times \cZ\to \cP\to \cZ$, the inclusion of the kernels in \Cref{lemma_auxiliary_almost_garbage} hold. In turn, this gives that also $\cZ\to \cX$ factors via $\cZ\to \cT\to \cX$.
\end{proof}

\section{Torsors, twisted tori and gerbes}\label{section torsors and twisted tori}
In this section we recall some well-known facts about torsors, twisted tori and gerbes that will be needed in the rest of the paper. In particular, along the way we prove that the classifying space of a twisted torus is isomorphic to a certain fiber product. We will use these results in \Cref{section when BG not eff}.
\subsection{Torsors}
We will be mostly interested in (\'etale) torsors over the spectrum of a field $L$, and $G$ will be a finite type group scheme over $\spec(L)$. We begin with the following:

\begin{Def}
    If $f:X\to Y$ is a $G$-torsor, the automorphisms of $f$ are the automorphisms of $X$ which commute with $f$ and the (left) $G$-action.
\end{Def}
\begin{EG}\label{example aut trivial G torsor}
    If $f:G\times Y\to Y$ is the trivial torsor, there is a bijection between $G$ and the automorphisms of $f$. This bijection sends $G\ni g\mapsto ((h,y)\mapsto (hg^{-1},y))$. This is clearly an isomorphism of $G\times Y$ which commutes with $f$, and it commutes with the action since $h_1(h_2g^{-1})=(h_1h_2)g^{-1}$: the multiplication in $G$ is associative, and our actions are left actions.
\end{EG}

Torsors can be described using descent. Let us first fix some notation.
\begin{Notation}
    In what follows we use use $G$ to denote a group scheme over $k$, we use $X$ for a $G$-torsor which gets trivialized by a Galois extension $k\subseteq k'$, and finally we use $\Gamma$ to denote the Galois group of the extension $k\subset k'$. 
\end{Notation}
In particular $\Gamma$ acts on $G_{k'}$, the action commutes with the group operation in $G_{k'}$, and there is a bijection between the set of $G$-torsors $X\to \spec(k)$ and $H^1(\Gamma, G_{k'})$ (see \cite[Chapter 1 Section 5]{Galois_cohom}); in the following remark we construct one of the two arrows. 

\begin{Remark}\label{remark from cocycle to t functions}
In the context of group cohomology, consider a cocycle 
    $$\Gamma\longrightarrow G_{k'},\quad \gamma\longmapsto g_\gamma.$$
The cocycle condition is that $g_{\gamma\delta}=g_{\gamma}(\gamma*g_{\delta})$ where $*$ is the action of $\Gamma$ on $G_{k'}$. By descent, this defines a $G$-torsor over $\spec(k)$, which we can understand as follows. 

The cocycle defines an action of $\Gamma$ on $G_{k'}$ via $(\gamma,x)\mapsto (\gamma*x)\cdot g^{-1}_\gamma$, where we denoted by $\cdot$ the group multiplication. The cocycle condition guarantees that this is indeed an action, namely $$x\mapsto (\gamma*x)\cdot g^{-1}_\gamma\mapsto \delta*((\gamma*x)\cdot g^{-1}_\gamma)\cdot g_{\delta}^{-1} = ((\delta\gamma)*x)\cdot (\delta*g^{-1}_\gamma)\cdot g_{\delta}^{-1} =((\delta\gamma)*x) \cdot g_{\delta\gamma}^{-1}.$$
By taking the quotient of $G_{k'}$ with respect to this action, we get the desired $G$-torsor.
For example, the trivial cocycle $\gamma \mapsto 1$ gives rise to the trivial $G$-torsor: the $G$-torsor given by taking the $\Gamma$-invariants in $G_{k'}$. 
\end{Remark}
\begin{Def}
    Given a cocycle $\Phi:\Gamma\to G_{k'}$, $\gamma\mapsto g_\gamma$, for every $\gamma\in \Gamma$ we have an isomorphism $G_{k'}\to G_{k'}$, $x\mapsto (\gamma\ast x)\cdot g_\gamma^{-1}$. We will call these isomorphisms the \textit{transition functions} of the cocycle $\Phi$.
\end{Def}
 The transition functions are the ones that define a ``twisted'' action of $\Gamma$ on $G_{k'}$; the quotient of $G_{k'}$ with respect to this action is the $G$-torsor associated to the cocycle.
\begin{EG}\label{example cocycle that gives the galois extension}
    Consider the trivial action of $\Gamma$ on itself, and consider the $\Gamma$-torsor associated to the cocycle $\Id:\Gamma\to \Gamma$, whose associated transition functions are $g \mapsto g\gamma^{-1}$. Then the resulting $\Gamma$-torsor over $\spec(k)$ is the quotient of $\Gamma$ by the action on itself defined by the transition functions: this quotient is exactly $\spec(k')\to \spec(k)$, with the $\Gamma$-action being the action of the Galois group.
\end{EG}
\begin{Lemma}\label{lemma automorphisms of a torsor}
    Assume that $f:X\to \spec(k)$ is a $G$-torsor, assume that $f$ is trivialized on the Galois extension $k\subseteq k'$ with Galois group $\Gamma$, and assume that $X$ is given by the cocycle $\Gamma\to G_{k'}$, $\gamma\mapsto g_\gamma$. Then there is a (non-canonical) bijection between $\Aut(X)$ and the set $$ \{a\in G_{k'}\text{ such that }\gamma * a = g_{\gamma}^{-1}\cdot a\cdot g_\gamma\}.$$
\end{Lemma}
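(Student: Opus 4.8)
The plan is to use faithfully flat (Galois) descent along $\spec(k')\to\spec(k)$ to reduce the computation of $\Aut(X)$ to a Galois-equivariance condition on $\Aut(X_{k'})$, where the torsor becomes trivial and its automorphisms are completely understood. First I would base change to $k'$: since $k\subseteq k'$ trivializes $X$, we have $X_{k'}\cong G_{k'}$ as a $G_{k'}$-torsor, with $G_{k'}$ acting by left multiplication. By \Cref{example aut trivial G torsor}, the group $\Aut(X_{k'})$ is then identified with $G_{k'}$, where $a\in G_{k'}$ corresponds to the automorphism $\psi_a:x\mapsto x\cdot a^{-1}$ given by right multiplication (which commutes with the left $G_{k'}$-action and with the structure map, as in that example).

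Next I would invoke Galois descent for morphisms: an automorphism of $X$ over $k$ is the same datum as an automorphism of $X_{k'}$ compatible with the descent data presenting $X$. By \Cref{remark from cocycle to t functions}, this descent datum is the twisted $\Gamma$-action whose transition functions are $\phi_\gamma:x\mapsto(\gamma\ast x)\cdot g_\gamma^{-1}$. Hence $\Aut(X)$ is identified with the set of those $a\in G_{k'}$ for which $\psi_a$ commutes with every $\phi_\gamma$.

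The core of the argument is then the direct computation of this commutation condition. Using that $\ast$ acts by group automorphisms (so $\gamma\ast a^{-1}=(\gamma\ast a)^{-1}$), the two composites are
\[\phi_\gamma(\psi_a(x)) = (\gamma\ast x)\cdot(\gamma\ast a)^{-1}\cdot g_\gamma^{-1}, \qquad \psi_a(\phi_\gamma(x)) = (\gamma\ast x)\cdot g_\gamma^{-1}\cdot a^{-1}.\]
Equating these for all $x$, cancelling $(\gamma\ast x)$ on the left, and taking inverses yields $g_\gamma\cdot(\gamma\ast a)=a\cdot g_\gamma$, that is $\gamma\ast a=g_\gamma^{-1}\cdot a\cdot g_\gamma$. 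This is precisely the defining relation of the target set, so the assignment $a\mapsto\psi_a$ restricts to the claimed bijection.

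I do not expect a serious obstacle here; the computation is short once the descent framework is in place. The only delicate points are the bookkeeping between left and right multiplications (the $G$-action is on the left, while automorphisms act on the right, per \Cref{example aut trivial G torsor}) and the observation that the bijection is \emph{non-canonical} because it depends on the chosen trivialization $X_{k'}\cong G_{k'}$; this is why the statement asserts only a bijection of sets rather than a canonical group isomorphism.
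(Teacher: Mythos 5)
Your proof is correct and follows essentially the same route as the paper: both identify $\Aut(X)$ with the automorphisms of the trivialized torsor $X_{k'}\cong G_{k'}$ that commute with the transition functions $x\mapsto(\gamma\ast x)\cdot g_\gamma^{-1}$, and then read off the resulting condition on $a\in G_{k'}$. The only difference is that you write out explicitly the commutation computation that the paper dismisses as ``straightforward'' (and your use of \emph{right} multiplication $x\mapsto x\cdot a^{-1}$ is the correct reading of \Cref{example aut trivial G torsor}, despite the paper's proof loosely saying ``left multiplication'').
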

\begin{proof}
    The automorphisms of $f$ are the automorphisms of $f_{k'}:X_{k'}\to \spec(k')$ which descend, i.e. which commute with the transition functions. As $X_{k'}$ is a trivial torsor, up to choosing an isomorphism $X_{k'}\to G_{k'}$, we can identify the automorphisms of $X_{k'}$ with $G_{k'}$, as in \Cref{example aut trivial G torsor}. Those which descend are the $\beta$'s which make the following diagram commutative for every $\gamma\in \Gamma$:
    $$\xymatrix{G_{k'}\ar[d]_{x\mapsto (\gamma \ast x)\cdot g_\gamma^{-1}} \ar[r]^\beta & G_{k'}\ar[d]^{x\mapsto (\gamma \ast x)\cdot g_\gamma^{-1}} \\ G_{k'}\ar[r]^\beta & G_{k'}.}$$
    As $\beta$ is given by left multiplication for $a\in G_{k'}$, it is now straightforward to see the desired bijection.
\end{proof}

\subsection{Twisted tori and gerbes}
\begin{Def}
   A twisted torus over $k$ is a group scheme $G$ which admits an isomorphism $G_{\overline{k}}\cong (\Gm^n)_{\overline{k}}$ for a certain $n$, where $\overline{k}$ is the algebraic closure of $k$.
\end{Def}
It is well known that if $G$ is a twisted torus, then there is a finite Galois extension $k'/k$ together with an isomorphism $G_{k'}\cong (\Gm^n)_{k'}$. We now recall \cite[Proposition 2.4]{Arithmetictoricvars}:
\begin{Prop}\label{prop from homs to twisted tori} Let $k\subseteq k'$ a Galois extension with Galois group $\Gamma$.
    Then there is a bijection between homomorphisms $\Gamma\to \GL_n(\mathbb{Z})$ and pairs $(T,f)$ where $T\to \spec(k)$ is a twisted torus and $f:T_{k'}\cong (\Gm^n)_{k'}$ is an isomorphism. So this induces a bijection between conjugacy classes of homomorphisms $\Gamma\to \GL_n(\mathbb{Z})$ and twisted tori $T\to \spec(k)$ such that $T_{k'}\cong (\Gm^n)_{k'}$.
\end{Prop}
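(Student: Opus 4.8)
The plan is to identify twisted tori that split over $k'$ with Galois-equivariant lattices via the character-lattice anti-equivalence, and then read off the two stated bijections. Write $M:=\bZ^n=X^*((\Gm^n)_{k'})$ for the character lattice of the split torus. Since a homomorphism of tori over a field is determined by its effect on characters, and the characters of $\Gm^n$ are defined over the prime field, I first record the identification $\Aut_{k'\text{-gp}}((\Gm^n)_{k'})=\Aut_{\bZ}(M)=\GL_n(\bZ)$, with $\Gamma$ acting \emph{trivially} on the right-hand side (integral matrices do not involve the field $k'$). This triviality is what ultimately turns the classifying cohomology into plain homomorphisms.

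Next I would construct the forward map. Given a pair $(T,f)$, let $N:=X^*(T)$, a free $\bZ$-module of rank $n$ carrying the natural linear $\Gamma$-action coming from the $k$-structure of $T$; base change gives $X^*(T_{k'})=N$ compatibly. The isomorphism $f:T_{k'}\cong(\Gm^n)_{k'}$ induces an isomorphism of abelian groups $f^*:M\to N$, and transporting the $\Gamma$-action on $N$ through $f^*$ produces a linear $\Gamma$-action on $M$, i.e.\ a homomorphism $\rho_{(T,f)}:\Gamma\to\GL_n(\bZ)$. Conversely, given $\rho$, equip $M$ with the $\Gamma$-action through $\rho$ to obtain a Galois lattice $M_\rho$; by effective Galois descent for tori (equivalently $T_\rho:=\spec\big((k'[M_\rho])^\Gamma\big)$, with $\Gamma$ acting diagonally on the group algebra), this lattice corresponds to a torus $T_\rho$ over $k$, and the identity on underlying groups yields $f_\rho:(T_\rho)_{k'}\cong(\Gm^n)_{k'}$. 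A direct check that these assignments are mutually inverse gives the first bijection.

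For the second bijection I would compare two pairs with the same underlying torus. Precomposing $f$ with an automorphism $A\in\Aut((\Gm^n)_{k'})=\GL_n(\bZ)$ replaces $\rho$ by its conjugate $A\rho A^{-1}$ (the direction is pinned down by chasing $f^*$ through the previous step), and any isomorphism $T\cong T'$ over $k$ together with the chosen trivializations produces exactly such an $A$, which is automatically $\Gamma$-equivariant and hence conjugates the two homomorphisms. Thus forgetting $f$ identifies the fibre of $(T,f)\mapsto T$ with a single $\GL_n(\bZ)$-conjugacy orbit, and isomorphism classes of twisted tori split by $k'$ correspond to conjugacy classes of homomorphisms $\Gamma\to\GL_n(\bZ)$. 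Conceptually this is the statement that such tori are classified by the nonabelian $H^1(\Gamma,\GL_n(\bZ))$ with trivial coefficient action, whose cocycles are homomorphisms and whose classes are conjugacy classes, matching the torsor formalism of \Cref{remark from cocycle to t functions}.

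The main obstacle is less any single computation than the bookkeeping underlying the anti-equivalence: one must (i) justify effectivity of Galois descent, so that every $\rho$ genuinely arises from a torus over $k$ (this is the crux of surjectivity), and (ii) keep the variance straight — characters are contravariant and the Galois action is a left action — so that what emerges is a homomorphism rather than an anti-homomorphism, and so that change of trivialization yields conjugation in the stated direction. Once these are settled, both bijections follow formally.
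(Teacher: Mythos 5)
Your proof is correct, but it takes a genuinely different route from the paper's. The paper stays on the geometric side: it invokes the classification of $k'/k$-forms of $(\Gm^n)_{k'}$ by the nonabelian cohomology set $H^1(\Gamma,\Aut((\Gm^n)_{k'}))$, observes that $\Aut((\Gm^n)_{k'})=\GL_n(\bZ)$ carries the \emph{trivial} $\Gamma$-action, and then reads off that cocycles are precisely homomorphisms $\Gamma\to\GL_n(\bZ)$ and that cohomologous cocycles are precisely conjugate ones. You instead pass through the character-lattice anti-equivalence: the forward map transports the Galois action on $X^*(T_{k'})$ through $f^*$, and the backward map builds the torus explicitly as $\spec\big((k'[M_\rho])^\Gamma\big)$. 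The pivotal fact is the same in both treatments (triviality of the $\Gamma$-action on $\GL_n(\bZ)$), as is the treatment of conjugacy via change of trivialization. What your route buys is that effectivity of descent --- which you correctly identify as the crux of surjectivity --- is not an appeal to the general theory of forms but an explicit invariant-ring construction, making the argument self-contained. What the paper's route buys is brevity and, more importantly, compatibility with the rest of the paper: the cocycle/transition-function description it produces is exactly the formalism of \Cref{remark from cocycle to t functions} that is reused in \Cref{lemma points of nonsplit torus using descent} and in the proof of \Cref{theorem from semidirect product to nonsplit torus}, so the paper's phrasing of the bijection plugs into those later arguments without translation. Your variance caveat (contravariance of $X^*$ against left Galois actions) is a genuine bookkeeping point, but it is resolved by the standard convention $\gamma\cdot\chi=\gamma_{\Gm}\circ\chi\circ\gamma_T^{-1}$, so it does not affect correctness; likewise, both you and the paper implicitly read the first bijection at the level of pairs up to canonical isomorphism, which is the intended meaning.
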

 The argument essentially follows from the fact that twisted tori as above are parametrized by the group $H^1(\Gamma, \Aut((\Gm^n)_{k'}))$, and the action of $\Gamma$ on $\Aut((\Gm)_{k'})$ is trivial. So the cocycle condition $a_{\gamma \delta}=a_\gamma \gamma\ast a_\delta$ becomes $a_{\gamma\delta}=a_\gamma a_\delta$ (namely, $a$ is a homomorphism); and two cocycles $a$ and $a'$ are conjugated if and only $a$ is the composition of $a'$ with an inner automorphism. 
 
\begin{Notation}\label{notation twisted torus}Given an homomorphism $\phi:\Gamma\to \GL_n(\bZ)$, we denote by $T_\phi$ the twisted torus induced by $\phi$ as in \Cref{prop from homs to twisted tori}.
\end{Notation}
\begin{Lemma}\label{lemma points of nonsplit torus using descent}
    Let $T_\phi$ be a torus twisted by $\phi$. Then, for every scheme $S$ over $\spec(k)$, there is a bijection $$T_\phi(S)\longleftrightarrow \{x \in \Gm^n(S_{k'})\text{ such that }x =\phi(\gamma)(\gamma \ast x)\}.$$
\end{Lemma}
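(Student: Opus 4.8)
The plan is to unwind the definition of the twisted torus $T_\phi$ via Galois descent and then simply translate the descent condition on $S$-points into the asserted equation. Recall from \Cref{prop from homs to twisted tori} and \Cref{notation twisted torus} that $T_\phi$ is the twisted torus obtained from the homomorphism $\phi:\Gamma\to\GL_n(\bZ)=\Aut((\Gm^n)_{k'})$, regarded as a cocycle for the trivial $\Gamma$-action on $\Aut((\Gm^n)_{k'})$. Concretely, $T_\phi$ is the quotient of $(\Gm^n)_{k'}$ by the twisted $\Gamma$-action whose transition functions are $x\mapsto \phi(\gamma)(\gamma\ast x)$, in the manner of \Cref{remark from cocycle to t functions} (here the group operation on $(\Gm^n)_{k'}$ is written additively/via the $\GL_n(\bZ)$-action, so conjugation by $g_\gamma$ is replaced by applying the automorphism $\phi(\gamma)$).

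First I would spell out what an $S$-point of $T_\phi$ is under faithfully flat descent along $\spec(k')\to\spec(k)$. By definition of the quotient, $T_\phi(S)$ is the equalizer of the two maps $(\Gm^n)(S_{k'})\rightrightarrows (\Gm^n)(S_{k'\otimes_k k'})$ coming from the twisted action; equivalently, an $S$-point of $T_\phi$ is an element $x\in\Gm^n(S_{k'})$ that is fixed by the twisted $\Gamma$-action. Since the transition function attached to $\gamma\in\Gamma$ sends $x$ to $\phi(\gamma)(\gamma\ast x)$, being fixed means exactly $x=\phi(\gamma)(\gamma\ast x)$ for all $\gamma\in\Gamma$, which is the condition in the statement. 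I would present this as the natural bijection
\[
T_\phi(S)\longleftrightarrow\{x\in\Gm^n(S_{k'})\text{ such that }x=\phi(\gamma)(\gamma\ast x)\text{ for all }\gamma\in\Gamma\},
\]
checking that it is functorial in $S$ and compatible with the group structure, though the latter is not strictly required by the statement.

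The main point to verify carefully, and the only place where something could go wrong, is that descent for the functor of points genuinely produces this equalizer description; here one uses that $\spec(k')\to\spec(k)$ is a Galois (hence fppf) cover with group $\Gamma$, so that $S_{k'\otimes_k k'}\cong\prod_{\gamma\in\Gamma}S_{k'}$ and the two projection maps in the descent datum are precisely $\mathrm{id}$ and the $\gamma$-twisted maps. This reduces the cocycle/equalizer condition to a pointwise condition indexed by $\Gamma$, giving the displayed set. Everything else is bookkeeping: one must be consistent about whether $\gamma\ast x$ denotes the Galois action on the coefficients of $S_{k'}$ and whether $\phi(\gamma)$ acts as an automorphism of $\Gm^n$ coming from $\GL_n(\bZ)$, but once the conventions of \Cref{remark from cocycle to t functions} are in force the identification is immediate. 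The hard part is conceptual rather than computational, namely making sure the twisted descent datum defining $T_\phi$ is correctly matched to the stated fixed-point condition; there is no genuine obstacle, only the need to keep the two $\Gamma$-actions (on coefficients and on $\Gm^n$) clearly separated.
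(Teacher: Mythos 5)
Your proposal is correct and follows essentially the same route as the paper: the paper's proof also just invokes Galois descent along $\spec(k')\to\spec(k)$ and identifies an $S$-point of $T_\phi$ with an element $x\in\Gm^n(S_{k'})$ compatible with the twisted descent datum, i.e.\ satisfying $x=\phi(\gamma)(\gamma\ast x)$. Your extra care in spelling out the equalizer via $S_{k'\otimes_k k'}\cong\prod_{\gamma\in\Gamma}S_{k'}$ is just a more explicit rendering of the same argument.
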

\begin{proof}
 This follows from descent. Indeed, a morphism $S\to T_\phi$ is equivalent, via descent, to a morphism $S_{k'}\to \Gm^n$ which satisfies the descent condition. This is a morphism $x\in \Gm^n(S_{k'})$ which makes the following diagram commutative
 $$\xymatrix{S_{k'}\ar[d]^{\Id} \ar[r]^x & \Gm^n\ar[d]^{g\mapsto \phi(\gamma)(\gamma\ast g)} \\S_{k'}\ar[r]^x &\Gm^n.} $$
\end{proof}

Now, given a homomorphism $\phi:\Gamma\to \GL_n(\bZ)$ one can perform another construction, the semidirect product $\Gm^n\rtimes_\phi \Gamma$. The following result clarifies the relation between the group $\Gm^n\rtimes_\phi \Gamma$ and $\cB T_{\phi}$.
\begin{Teo}\label{theorem from semidirect product to nonsplit torus}
    Let $\phi:\Gamma \to \GL_n(\bZ)$ be a homomorphism, let $G:=\Gm^n\rtimes_\phi \Gamma$, and let $\spec(k)\to \cB\Gamma$ be the morphism associated to the $\Gamma$-torsor $\spec(k')\to \spec(k)$. Consider the following fiber diagram, where the map $\cB G\to \cB\Gamma$ is associated to the surjection $G\to \Gamma$, $(g,\gamma)\mapsto \gamma$:
    $$\xymatrix{\cG = \spec(k)\times_{\cB\Gamma}\cB G\ar[r] \ar[d] & \cB G\ar[d] \\\spec(k)\ar[r] &\cB \Gamma.}$$
    Then $\cG = \cB T_\phi$.
\end{Teo}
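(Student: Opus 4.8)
The plan is to compute the fiber product $\cG = \spec(k)\times_{\cB\Gamma}\cB G$ explicitly as a groupoid and then match it with $\cB T_\phi$ by comparing functors of points, using the descent descriptions furnished by \Cref{lemma points of nonsplit torus using descent} and \Cref{lemma automorphisms of a torsor}. First I would unwind what an $S$-point of $\cG$ is. Since $\cB G\to\cB\Gamma$ is induced by the projection $p:G=\Gm^n\rtimes_\phi\Gamma\to\Gamma$, an $S$-point of $\cG$ is a $G$-torsor $E\to S$ together with a trivialization of the associated $\Gamma$-torsor $E\times^G\Gamma$ identifying it with the pullback of $\spec(k')\to\spec(k)$. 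In other words, it is a $G$-torsor whose pushforward along $p$ is isomorphic to the fixed $\Gamma$-torsor corresponding to $k'/k$.

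Next I would use the observation that $\Gm^n = \ker(p)$ is a normal subgroup, so that $G$-torsors lifting a fixed $\Gamma$-torsor $P\to S$ are classified by torsors under the \emph{twisted form} of $\Gm^n$ obtained by letting $\Gamma$ act via $\phi$; concretely, this twisted form is exactly $T_\phi$. The cleanest way to see this is to work over the trivializing cover $\spec(k')$: after base change to $k'$ everything splits, $G_{k'}\cong \Gm^n_{k'}\rtimes\Gamma_{k'}$, and a lift of the standard $\Gamma$-torsor corresponds to a cocycle $\gamma\mapsto (g_\gamma,\gamma)\in G_{k'}$ whose $\Gamma$-component is the identity cocycle of \Cref{example cocycle that gives the galois extension}. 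The cocycle condition for $G$ then forces the $\Gm^n$-components $g_\gamma$ to satisfy precisely the twisted relation $g_{\gamma\delta}=g_\gamma\cdot\phi(\gamma)(\gamma\ast g_\delta)$, which is the descent condition defining $T_\phi$-torsors. Thus the fiber of $\cB G(S)\to\cB\Gamma(S)$ over the class of $k'/k$ is naturally equivalent to the groupoid of $T_\phi$-torsors on $S$, which is $\cB T_\phi(S)$; tracking automorphisms via \Cref{lemma automorphisms of a torsor} shows the equivalence is compatible with morphisms, giving an equivalence of groupoids natural in $S$.

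I expect the main obstacle to be bookkeeping the automorphisms and the $2$-categorical structure correctly, rather than the existence of the bijection on isomorphism classes. Specifically, the subtle point is that $\cG$ is a $2$-fiber product, so an $S$-point carries not just a $G$-torsor but also the datum of an isomorphism of the associated $\Gamma$-torsor with the chosen one, and the automorphisms in $\cG$ are those automorphisms of the $G$-torsor that are compatible with this chosen isomorphism; I must check these correspond exactly to automorphisms of the resulting $T_\phi$-torsor and not to a larger group involving $\Gamma$. The semidirect (as opposed to direct) product is what makes the $\Gamma$-action on $\Gm^n$ nontrivial and hence produces $T_\phi$ rather than the split $\cB\Gm^n$; keeping the action $\phi$ threaded correctly through the cocycle computation is where care is needed.

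An alternative, more structural route that avoids cocycle computations would be to exhibit $\cG\to\spec(k)$ directly as the classifying stack of a group scheme by identifying its inertia. Since $\cB G\to\cB\Gamma$ has a canonical section induced by the inclusion $\Gamma\hookrightarrow G$, and since $\cG$ is the fiber over the basepoint, one checks $\cG$ is a gerbe over $\spec(k)$ banded by the relative inertia, namely the kernel $\Gm^n$ twisted by the conjugation action of $\Gamma$ encoded in $\phi$; this twisted band is $T_\phi$, and because the section provides a $k$-point, the gerbe is neutral, giving $\cG\cong\cB T_\phi$. I would likely present the explicit descent argument as the primary proof since it is the most self-contained given the lemmas already available in the paper, and only gesture at the gerbe-theoretic interpretation as motivation.
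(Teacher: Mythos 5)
Your ``alternative, more structural route'' --- the one you relegate to motivation --- is precisely the paper's proof: the paper shows $\cG\to\spec(k)$ is a gerbe, constructs a section by exhibiting the $G$-torsor associated to the cocycle $\gamma\mapsto(1,\gamma)\in G_{k'}$ (equivalently, the pushforward of $\spec(k')$ along $\Gamma\hookrightarrow G$, as you suggest), and then computes, via \Cref{lemma automorphisms of a torsor} and \Cref{lemma points of nonsplit torus using descent}, that the automorphisms of this section compatible with the trivialization of the $\Gamma$-part are exactly $\{\lambda\in(\Gm^n)_{k'} \text{ such that } \phi(\gamma)(\gamma\ast\lambda)=\lambda\}=T_\phi$, whence $\cG\simeq\cB T_\phi$ by neutrality. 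So in substance you already have the paper's argument; you have just demoted it to a gesture.

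The route you propose to present as primary, however, has a genuine gap for a general base $S$. Your dictionary between objects of $\cG(S)$ and cocycles $\gamma\mapsto(g_\gamma,\gamma)$ valued in $G(S_{k'})$, where $S_{k'}:=S\times_{\spec(k)}\spec(k')$, only describes those $G$-torsors that become \emph{trivial} after pullback along $S_{k'}\to S$. When $S$ is the spectrum of a field or of a local ring this is all of them (Hilbert 90), but in general it is not: an object of $\cG(S)$ restricted to $S_{k'}$ reduces to a $\Gm^n$-torsor over $S_{k'}$, which can be nontrivial (it carries Picard classes), and correspondingly $\cB T_\phi(S)$ contains torsors not split by $S_{k'}$. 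So matching cocycles for the fixed cover $\spec(k')$ identifies only the full subgroupoids of objects trivialized by that cover, and the claimed equivalence ``natural in $S$'' does not follow from the cocycle computation as stated. The repair is exactly your structural route: either define the comparison functor globally, sending a lift $E$ to the $T_\phi$-torsor $\uIsom(\cF_S,E)$ of isomorphisms of lifts from the canonical one, or check the equivalence \'etale-locally on $S$; both reduce to the principle that a gerbe with a section is the classifying stack of the automorphism sheaf of that section. This is why the paper's packaging --- one explicit cocycle plus one automorphism computation, with the gerbe formalism absorbing all functoriality --- is the more economical argument, and I would recommend adopting it as your main proof rather than the fiberwise comparison.
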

\begin{proof}
    As $\cB  G\to\cB\Gamma$ is a gerbe, we already know that $\cG\to\spec(k)$ is a gerbe. It suffices to prove that, if we denote with $\pi:\cG\to \spec(k)$ the first projection, then $\pi$ has a section $\xi$, and that there is an isomorphism $\underline{\Aut}_k(\xi)\cong T_\phi$: indeed, from the first statement it would follow that $\cG\simeq \cB \underline{\Aut}_k(\xi)$, which combined with the second statement would give us the desired conclusion.

    \underline{The map $\pi$ has a section:} from the universal property of the fiber product, it suffices to construct a $G$-torsor $\cF$ over $\spec(k)$ whose associated $\Gamma$-torsor $\cF/\Gm^n$ is isomorphic to $\spec(k')$: the latter statement is equivalent to proving that (1) the pullback to $\spec(k')$ of $\cF/\Gm^n$ is trivial and (2) the associated cocycle is the one in \Cref{example cocycle that gives the galois extension}.

    First observe that there is an action of $\Gamma$ on $(\Gm^n)_{k'} = \spec(k'[t_1^{\pm},...,t_n^{\pm}])$, which we denote by $\ast_{\Gm^n}$, as there is an action of $\Gamma$ on $\spec(k')$ (the Galois action). We also use $\ast_{\Gamma}$ to indicate the trivial action of $\Gamma$ on itself, so that $\sigma\ast_\Gamma \gamma :=\gamma$. This gives an action of $\Gamma$ on $G= \Gm^n\rtimes_\phi \Gamma$, via $$\sigma *(g,\gamma):=(\sigma\ast_{\Gm^n}g,\gamma).$$
    As $\ast_{\Gm^n}$ acts only on the coefficients of $(\Gm^n)_{k'} = \spec(k'[t_1^{\pm},...,t_n^{\pm}])$, for every $\sigma$ and $\gamma\in \Gamma$, the operation $\sigma \ast_{\Gm^n}$ commutes 
    with $\phi(\gamma)$, so $\ast$ is an action of $\Gamma$ on $G$ that commutes with the multiplication on $G$. So now we have an action of $\Gamma$ on $G$, and we can check that $$\Gamma\longrightarrow G,\quad \gamma\longmapsto a_\gamma = (1,\gamma)$$
    satisfies the cocycle condition. In particular, it induces a $G$-torsor over $\spec(k)$, which we denote by $\cF\to \spec(k)$. This induces a map $\spec(k)\to \cB G$.

    The $\Gamma$-torsor associated to $\cF$ is obtained as follows. By construction, $\cF_{k'}$ is the trivial $G$-torsor, so if we fix an isomorphism between $\cF_{k'}\to G_{k'}$ we can identify the source with the target. The trivial torsor $G_{k'}$ has a (left) action of $G_{k'}$, so one can take its quotient by $\Gm^n\subseteq G$, to get $\Gm^n\backslash G_{k'}$. From \Cref{remark from cocycle to t functions}, the cocycle for $\cF$ (namely, the cocycle
    $a_\gamma$) descends for a cocycle for $\Gm^n\backslash G_{k'}$ (namely, the transition functions commute with taking the $\Gm^n$-quotient). This is clear from \Cref{remark from cocycle to t functions} as $G_{k'}$ is associative: the elements $a_\gamma^{-1}$ are multiplied on the right, whereas the action is a left action.
    
    So $a_\gamma$ descend and give transition functions on $\Gm^n\backslash G_{k'}$. If we fix the isomorphism $\Gamma \to \Gm^n\backslash G_{k'}$, $\gamma \mapsto [(1,\gamma)]$ so that we can identify the quotient $\Gm^n\backslash G_{k'}$ with $\Gamma$, one can see that the cocycle $a_\gamma$ is the same as the one in \Cref{example cocycle that gives the galois extension}. In particular, $\Gm^n\backslash\cF$ is isomorphic to the $\Gamma$-torsor $\spec(k')\to \spec(k)$. So from the universal property of the fiber product, $\pi$ has a section $\xi:\spec(k)\to\cG$.

    \underline{$\underline{\Aut}_k(\xi)\cong T_\phi$:} from \Cref{lemma automorphisms of a torsor} we have
    \[\Aut_k(\cF) = \{a = (\lambda,\mu)\in (G)_{k'}\text{ such that }\gamma * a = g_{\gamma}^{-1}\cdot a\cdot g_\gamma\},\]
    and from the universal property of the fiber product the automorphisms of $\xi$ are the automorphisms of $\cF$ of the form $(\lambda, 1)$. Since conjugating with $g_{\gamma^{-1}}$ acts as $\phi(\gamma^{-1}) = \phi^{-1}(\gamma)$, we deduce 
\begin{align*}
    \Aut_k(\cF)
    =&\{(\lambda,1)\in (G)_{k'}\text{ such that }\gamma * a = g_{\gamma}^{-1}\cdot a\cdot g_\gamma\} \\
    =&\{\lambda \in (\Gm)_{k'} \text{ such that }\gamma * \lambda = \phi^{-1}(\gamma)(\lambda)\} \\
    =&\{\lambda \in (\Gm)_{k'} \text{ such that }\phi(\gamma)(\gamma * \lambda) = \lambda\} \\
    =&T_\phi(\spec(k))
\end{align*}
where for the last bijection we used \Cref{lemma points of nonsplit torus using descent}.
\end{proof}

\section{When $\cB G$ is not effective}\label{section when BG not eff}

In this section we investigate when the classifying space of a group $G$ is not effective. We begin with the following criterion, which will be our main tool to prove that $\cB G$ is not effective.

\begin{theorem}\label{teo criteria for BG to be effective with double quotient}
    Let $G$ be a connected reductive group, let $b:\spec(R')\to \spec(R)$ be an \'etale morphism of DVRs, let $\alpha:\spec(K(R'))\to \spec(R')$ and $a: \spec(K(R))\to \spec(R)$ be the inclusions of the generic points, and let $\beta:\spec(K(R'))\to \spec(K(R))$ be the map between the generic points. Let $\cG_1\to \spec(K(R))$ and $\cG_2\to \spec(R')$ be two $G$-torsors, which are isomorphic when pulled back to $\spec(K(R'))$. If $\cB G$ is effective, then the double quotient 
    $$\Aut(\cG_1)\backslash \operatorname{Isom}(\beta^*\cG_1,\alpha^*\cG_2)/\Aut(\cG_2)$$
    is a single element.
\end{theorem}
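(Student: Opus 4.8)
The plan is to recognize the double quotient as the set of isomorphism classes of morphisms $P\to\cB G$ whose restrictions to $\spec(K(R))$ and to $\spec(R')$ are the prescribed torsors $\cG_1$ and $\cG_2$, and then to transport that classification to $\spec(R)$ using effectivity, where it becomes controlled by a purity statement for reductive-group torsors over a DVR.

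First I would unwind the pushout description of $P$. Since $P$ is the pushout of $\beta\colon\spec(K(R'))\to\spec(K(R))$ and $\alpha\colon\spec(K(R'))\to\spec(R')$ \cite{rydh2011etale}, a morphism $P\to\cB G$ is the same datum as a triple $(\cH_2,\cH_1,\tau)$ consisting of a $G$-torsor $\cH_2$ on $\spec(R')$, a $G$-torsor $\cH_1$ on $\spec(K(R))$, and an isomorphism $\tau\colon\beta^*\cH_1\xrightarrow{\sim}\alpha^*\cH_2$ over $\spec(K(R'))$. Normalizing $\cH_1=\cG_1$ and $\cH_2=\cG_2$, two triples $(\cG_2,\cG_1,\tau)$ and $(\cG_2,\cG_1,\tau')$ are isomorphic precisely when $\tau'=\alpha^*(\phi)\circ\tau\circ\beta^*(\psi)$ for some $\psi\in\Aut(\cG_1)$ and $\phi\in\Aut(\cG_2)$. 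Hence (up to the harmless left/right convention) the isomorphism classes of morphisms $P\to\cB G$ restricting to $(\cG_1,\cG_2)$ are exactly the double quotient $\Aut(\cG_1)\backslash\operatorname{Isom}(\beta^*\cG_1,\alpha^*\cG_2)/\Aut(\cG_2)$, and the hypothesis $\beta^*\cG_1\cong\alpha^*\cG_2$ guarantees this set is nonempty.

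Next I would bring in effectivity. Taking $\cT=\spec(R)$ (the trivial gerbe) in \Cref{def:effective}, effectivity of $\cB G\to\spec(k)$ says exactly that pullback along $p\colon P\to\spec(R)$,
\[
p^*\colon\Hom(\spec(R),\cB G)\longrightarrow\Hom(P,\cB G),
\]
is essentially surjective; combined with \Cref{cor hom from spec r sono determinati da hom from P}, which provides full faithfulness, $p^*$ is an equivalence of groupoids. Under this equivalence the restriction of a morphism $P\to\cB G$ to $\spec(R')$ and to $\spec(K(R))$ corresponds to the restriction of the associated $G$-torsor $\cG$ on $\spec(R)$ to $\spec(R')$ and $\spec(K(R))$. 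Therefore the double quotient is in bijection with the set of isomorphism classes of $G$-torsors $\cG$ on $\spec(R)$ such that $\cG|_{\spec(R')}\cong\cG_2$ and $\cG|_{\spec(K(R))}\cong\cG_1$; effectivity makes this set nonempty, so it remains to prove it has at most one element.

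Finally I would show that a $G$-torsor on the DVR $\spec(R)$ is determined up to isomorphism by its restriction to the generic point. Fixing such a $\cG$, any other $\cG'$ with $\cG'|_{\spec(K(R))}\cong\cG|_{\spec(K(R))}$ yields a torsor $\uIsom(\cG,\cG')$ under the inner form $\underline{\Aut}(\cG)={}^{\cG}G$, a reductive group scheme over $R$, which acquires a $K(R)$-point; by the Grothendieck–Serre conjecture for discrete valuation rings (the Nisnevich case) the map $\oH^1(\spec(R),{}^{\cG}G)\to\oH^1(\spec(K(R)),{}^{\cG}G)$ has trivial kernel, so $\uIsom(\cG,\cG')$ has an $R$-point and $\cG\cong\cG'$. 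This forces the set above to be a single element, hence so is the double quotient. The main obstacle is exactly this last input: the reduction through the pushout and through effectivity is formal, but the collapse of the set of extensions rests on the non-formal purity statement for reductive-group torsors over a DVR, which must be applied to the inner form ${}^{\cG}G$ so as to control the entire fiber and not merely the class of the trivial torsor. Note also that effectivity is genuinely used here — without it, torsors on $P$ need not descend to $\spec(R)$, and the double quotient is not controlled by this purity statement (as the case $G=\GL_2$ shows).
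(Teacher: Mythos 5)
Your proposal is correct and takes essentially the same route as the paper's proof: both unwind maps $P\to\cB G$ into triples via the pushout description, use effectivity with the trivial gerbe $\cT=\spec(R)$ to descend torsors from $P$ to $\spec(R)$, and conclude by the Grothendieck--Serre theorem over a DVR (the paper cites \cite{grothendieck_Serre_panin}*{Corollary 1}). The only differences are organizational --- the paper argues by contradiction with two non-equivalent isomorphisms $\psi,\widetilde{\psi}$, whereas you set up a direct bijection and carry out the twisting argument for the inner form ${}^{\cG}G$ explicitly --- but the decomposition and the key external input are identical.
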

\begin{proof}
    Let $P:=P_{R,R'}$. A $G$-torsor over $P$ consists of two $G$-torsors over $\spec(K(R))$ and $\spec(R')$, and an isomorphism between their restrictions to $\spec(K(R'))$.
We will argue by contradiction: let $\psi$ and $\widetilde{\psi}$ be two elements in $$\operatorname{Isom}(\cG_1|_{ K(R')},\cG_2|_{K(R')})$$ which are not in the same equivalence class. Let $\cG_P$ be the torsor
associated to $\cG_1\to \spec(K(R))$, $\cG_2\to \spec(R')$ and $\psi$; and $\widetilde{\cG}_P$ the one associated to $\cG_1\to \spec(K(R))$, $\cG_2\to \spec(R')$ and $\widetilde{\psi}$.

First, observe that $\cG_P$ and $\widetilde{\cG}_P$ are not isomorphic. Indeed, an isomorphism between $\cG_P$ and $\widetilde{\cG}_P$ consists of two elements $\sigma\in \Aut(\cG_1)$ an $\tau \in \Aut(\cG_2)$ such that $\psi = \sigma\circ \widetilde{\psi}\circ \tau$. Since $\psi$ and $\widetilde{\psi}$ are not in the same equivalence class, such an isomorphism cannot exist. 

Then notice that $\cG_P$ and $\widetilde{\cG}_P$ restrict to the same $G$-torsor over $\spec(K(R))$, namely $\cG_1$.
But then at least one between $\cG_P$ and $\widetilde{\cG}_P$ is not the pull-back of a $G$-torsor over $\spec(R)$. Indeed, if there were two $G$-torsors $\cG_R$ and $\widetilde{\cG}_R$ which pull back to $\cG_P$ and $\widetilde{\cG}_P$ respectively, then
\begin{enumerate}
    \item $\cG_R$ and $\widetilde{\cG}_R$ would not be isomorphic as $\cG_P$ and $\widetilde{\cG}_P$ are not isomorphic, but
    \item They would be isomorphic when restricted to $P$ and hence to $\spec(K(R))$.
\end{enumerate}
This contradicts \cite{grothendieck_Serre_panin}*{Corollary 1}. In particular $\cB G\to\spec(k)$ cannot be effective.
\end{proof}

\begin{Notation}\label{notation_fields_L_and_Lprime}
    In this section, $L\subseteq L'$ will be a Galois extension of fields with Galois group $\Gamma$. Then the extension $L(t)\subseteq L'(t)$ is still Galois with Galois group $\Gamma$, and $\Gamma$ acts trivially on $t$. In particular, it induces an \'etale extension of DVRs $\spec(L'[t]_{(t)})\to  \spec(L[t]_{(t)})$, we denote by $R$ (resp. $R'$) the ring $L[t]_{(t)}$ (resp. $L'[t]_{(t)}$). 
\end{Notation}

\begin{Prop}\label{prop sl2 not effective}
The morphism $\cB \SL_2\to \spec(k)$ is not effective.
\end{Prop}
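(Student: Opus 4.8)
The plan is to apply the criterion of \Cref{teo criteria for BG to be effective with double quotient}: since $\SL_2$ is connected reductive, it suffices to produce an \'etale extension of DVRs together with two $\SL_2$-torsors, isomorphic after pullback to the common generic fibre, for which the associated double quotient has more than one element. I will work with the extension of \Cref{notation_fields_L_and_Lprime}, taking $\Gamma=\bZ/2$ and $L'=L(\sqrt{a})$ for a non-square $a\in L^\times$ (for instance $L=k(s)$ and $L'=k(s,\sqrt{s})$, which is available even when $k$ is algebraically closed), so that $R=L[t]_{(t)}$, $R'=L'[t]_{(t)}$, $K(R)=L(t)$, $K(R')=L'(t)$, and $\sigma$ denotes the nontrivial element of $\Gamma$, acting by $\sqrt{a}\mapsto-\sqrt{a}$ and fixing $t$.

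The first step is to reduce the abstract double quotient to a concrete double coset of matrices. Recall that $\SL_2$ is a special group, so every $\SL_2$-torsor over a field, and over the local ring $R'$, is trivial; hence I may take both $\cG_1\to\spec(K(R))$ and $\cG_2\to\spec(R')$ to be the trivial torsors, which are manifestly isomorphic over $\spec(K(R'))$. Choosing trivializations identifies $\operatorname{Isom}(\beta^*\cG_1,\alpha^*\cG_2)$ with $\SL_2(L'(t))$, the action of $\Aut(\cG_1)=\SL_2(L(t))$ with left multiplication through the inclusion $L(t)\hookrightarrow L'(t)$, and the action of $\Aut(\cG_2)=\SL_2(R')$ with right multiplication through $R'\hookrightarrow L'(t)$. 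Thus the double quotient becomes
\[
\SL_2(L(t))\big\backslash \SL_2(L'(t))\big/\SL_2(R'),
\]
and the goal is to exhibit a matrix not in the double coset of the identity, i.e.\ not in $\SL_2(L(t))\cdot\SL_2(R')$.

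The final step is the explicit descent obstruction. I will take
\[
g=\begin{pmatrix} 1 & \sqrt{a}/t \\ 0 & 1\end{pmatrix}\in\SL_2(L'(t)),
\]
and argue by contradiction: if $g=AB$ with $A\in\SL_2(L(t))$ and $B\in\SL_2(R')$, then $A^{-1}g=B\in\SL_2(R')$; applying $\sigma$ and using that $A$ has entries in $L(t)$, so $\sigma(A)=A$, and that $\sigma(R')=R'$, we obtain $A^{-1}\sigma(g)\in\SL_2(R')$ as well, whence $g^{-1}\sigma(g)=(A^{-1}g)^{-1}(A^{-1}\sigma(g))\in\SL_2(R')$. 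But a direct computation gives
\[
g^{-1}\sigma(g)=\begin{pmatrix} 1 & -2\sqrt{a}/t \\ 0 & 1\end{pmatrix},
\]
whose upper-right entry has a pole at $t=0$ (here $\operatorname{char}k=0$ guarantees $2\sqrt{a}\neq0$), so it does not lie in $\SL_2(R')$ --- a contradiction. Hence $g$ represents a double coset distinct from that of the identity, the double quotient has at least two elements, and \Cref{teo criteria for BG to be effective with double quotient} forces $\cB\SL_2\to\spec(k)$ to be non-effective. The one point requiring care is the reduction in the second step, namely trivializing the torsors and correctly matching the two one-sided actions so that the abstract double quotient really is the matrix double coset above; once this identification is in place, the Galois-descent computation with $\sigma$ is the crux and is entirely elementary.
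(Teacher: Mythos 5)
Your proposal is correct and follows essentially the same route as the paper's proof: both invoke \Cref{teo criteria for BG to be effective with double quotient} over the extension of \Cref{notation_fields_L_and_Lprime}, use that $\SL_2$ is special to trivialize both torsors and reduce to the non-triviality of the double coset space $\SL_2(L(t))\backslash\SL_2(L'(t))/\SL_2(R')$ (the paper writes the quotient with the sides exchanged, which is equivalent via $g\mapsto g^{-1}$), and then exhibit an explicit matrix outside the double coset of the identity. The only difference is the concluding computation: the paper takes $M=\begin{pmatrix} t & 0\\ u & t^{-1}\end{pmatrix}$ and derives the contradiction by direct valuation analysis of the entries of $MB$, whereas you take a unipotent matrix and get the contradiction more conceptually from the Galois cocycle identity $g^{-1}\sigma(g)=B^{-1}\sigma(B)\in\SL_2(R')$, which is a clean and valid shortcut.
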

\begin{proof}Let $R:=L[t]_{(t)}$ and $R':=L'[t]_{(t)}$.
Pick an element $u\in L'\smallsetminus L$. Since all the $\SL_2$-torsors on a local ring are trivial (as $\SL_2$ is special), from \Cref{teo criteria for BG to be effective with double quotient} to prove the proposition it suffices to prove that the double quotient
$$\SL_{2,R'}\backslash \SL_{2,K(R')}/\SL_{2,K(R)}$$
is not trivial, where $\SL_{2,R'}$ acts by multiplication on the left and $\SL_{2,K(R)}$ by multiplication on the right. In particular, if $t$ is an uniformizer for $R$, it suffices to check that the following matrix is not in the orbit of the identity:
$$M :=\begin{bmatrix}t & 0\\u & t^{-1}\end{bmatrix}.$$
If it was in the orbit of the identity, we could find a matrix $$B:= \begin{bmatrix}
    a & b \\ c & d
\end{bmatrix}\in\SL_{2,K(R)} \text{ such that }MB =\begin{bmatrix}t a & t b\\ ua + t^{-1}c& ub + t^{-1}d\end{bmatrix} \in\SL_{2,R'}.$$

Since $ua + t^{-1}c\in R'$, we have $t(ua + t^{-1}c)\in R'$ so either $c=0$ or $v(c)\ge 0$: either way $c\in R'$.
Observe also that one between $a$ and $b$ does not belong to $R$, otherwise $MB$ would not have determinant one, as the first row of $MB$ is a multiple of $t$. So let's assume $a\notin R$, so $v(a)=-1$ as $ta\in R'$, where $v$ is the valuation on $R$. So we can write $a = t^{-1}a_1$ with $a_1$ unit in $R$, and $t^{-1}(ua_1 + c)\in R$. 
Therefore 
$$v(ua_1 + c)> 0.$$
If we consider $ua_1 + c$ in $L'=R'/(t)$, we have that the image of $u$ via $R'\to L'$ belongs to $L$ (it is equal to $-\frac{c}{a_1}$ modulo $t$), which is a contradiction.
\end{proof}
\begin{Prop}\label{prop nonsplit tori not eff} Let $\phi:\Gamma\to \GL_n(\bZ)$ be a homomorphism, and let $T_\phi\to \spec(L)$ be its associated non-split torus as in \Cref{notation twisted torus}. Then $\cB T_\phi$ is not effective.
\end{Prop}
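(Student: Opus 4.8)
The plan is to apply the criterion of \Cref{teo criteria for BG to be effective with double quotient} to the group $G = T_\phi$, exactly as was done for $\SL_2$ in \Cref{prop sl2 not effective}, using the same setup from \Cref{notation_fields_L_and_Lprime}: take $R = L[t]_{(t)}$ and $R' = L'[t]_{(t)}$ with the induced \'etale extension. The strategy is to exhibit two $T_\phi$-torsors, one over $\spec(K(R))$ and one over $\spec(R')$, which become isomorphic over $\spec(K(R'))$, and then show that the double quotient
$$\Aut(\cG_1)\backslash \operatorname{Isom}(\beta^*\cG_1,\alpha^*\cG_2)/\Aut(\cG_2)$$
has more than one element; this will contradict effectiveness. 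The natural first move is to take both torsors to be \emph{trivial} (which is legitimate since $T_\phi$ is smooth and torsors over the local rings $R'$ and $K(R')$ are classified by $H^1$, which we will need to understand), so that the Isom set becomes a torsor under $T_\phi$ over the relevant field, and the double quotient reduces to a double coset
$$T_\phi(R')\backslash T_\phi(K(R'))/T_\phi(K(R)).$$

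The key computational step is to describe these $S$-points via \Cref{lemma points of nonsplit torus using descent}, which identifies $T_\phi(S)$ with the $\phi$-twisted $\Gamma$-invariants $\{x\in \Gm^n(S_{k'}) : x = \phi(\gamma)(\gamma * x)\}$. First I would set up the valuations: since $R\to R'$ is \'etale, the uniformizer $t$ of $R$ remains a uniformizer of $R'$, so there is a well-defined valuation $v$ on $K(R') = L'(t)$ refining the one on $K(R)$, and I would compose with the cocharacter pairing to get a homomorphism $T_\phi(K(R')) \to \bZ^n$ recording the valuation vector of each coordinate. The plan is to show this valuation map is $\Gamma$-equivariant for the $\phi$-action on $\bZ^n$ and that, after passing to invariants, the double coset is detected by the quotient of the lattice $(\bZ^n)$ by the image of the valuations coming from $T_\phi(R')$ (which have valuation zero in every coordinate, landing in the torsion-free part trivially) and from $T_\phi(K(R))$. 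Concretely, I expect the double coset to surject onto something like the $\Gamma$-coinvariants or a quotient lattice $(\bZ^n)_\Gamma$ modulo the sublattice of $\Gamma$-fixed valuation vectors, and the point is that for a genuinely non-split $\phi$ (i.e. $\phi$ non-trivial), this quotient is non-trivial, giving at least two distinct double cosets.

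The main obstacle I anticipate is the bookkeeping of the twisted invariants together with the two valuations simultaneously: unlike the $\SL_2$ case, where a single explicit matrix $M$ did the job, here I need to produce an explicit element of $T_\phi(K(R'))$ whose coordinate-valuation vector is \emph{not} $\Gamma$-invariant in the appropriate sense, and then argue that no pair $(a,c)$ with $a\in T_\phi(R')$ and $c\in T_\phi(K(R))$ can absorb it, purely for valuation-lattice reasons. The subtlety is that $T_\phi(R')$ and $T_\phi(K(R))$ both sit inside $T_\phi(K(R'))$ through the descent description, and I must check their valuation images carefully: elements of $T_\phi(R')$ have valuation $0$ in each coordinate (they are units in $R'$), while elements of $T_\phi(K(R))$ contribute only $\Gamma$-invariant valuation vectors (since they descend to $K(R)$, their coordinate valuations must be fixed by $\phi(\Gamma)$). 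Thus a non-split $\phi$ produces a valuation vector that is invariant in the $K(R')$-lattice but whose class is non-trivial, which is exactly the obstruction. I would close by picking the simplest non-trivial $\phi$-eigen-behaviour — for instance an element cyclically permuted by $\Gamma$ — to write down the witnessing element concretely and finish by the same mod-$t$ reduction argument used at the end of \Cref{prop sl2 not effective}, deriving a contradiction with $u\in L'\smallsetminus L$.
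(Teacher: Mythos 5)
Your proposal is correct and follows essentially the same route as the paper: the same double-coset criterion with both torsors trivial, the same coordinate-valuation homomorphism $T_\phi(K(R'))\cong\Gm^n(L'(t))\to\bZ^n$, the same observations that $T_\phi(R')$ has valuation zero while elements of $T_\phi(K(R))$ have $\phi(\Gamma)$-fixed valuation vectors, and the same conclusion via a surjection of the double coset onto $\bZ^n$ modulo the fixed sublattice, which is nontrivial when $\phi$ is. The only superfluous point is your closing appeal to a mod-$t$ reduction as in the $\SL_2$ case: once the surjection onto the nontrivial lattice quotient is in place, a witness such as $(1,\ldots,t,\ldots,1)$ already finishes the argument, exactly as in the paper.
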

\begin{proof} 
We plan to apply \Cref{teo criteria for BG to be effective with double quotient}, using \Cref{lemma points of nonsplit torus using descent} to show that not all the elements of the form $(1,...,1,t,1,...,1)$ are in the same equivalence class of the identity in 
$$(\Gm^n)_{R'}\backslash (\Gm^n)_{L'(t)}/\Aut(T_\phi(L(t)).$$

Set $\cG_1:=T_{\phi,R}\to \spec(R)$, $\cG_2:={\Gm^n}_{,R'}\to \spec(R')$ and $f:(\cG_1)|_{L'(t)}\overset{\simeq}{\to} (\Gm)_{L'(t)}^n$ an isomorphism. We will use $f$ to identify $$(\Gm^n)_{L'(t)}\simeq \Isom_{\Gm^n\text{-torsors}}((\Gm^n)_{L'(t)},(\Gm^n)_{L'(t)}) \overset{\simeq}{\longrightarrow}\operatorname{Isom}((\cG_1)|_{L'(t)},(\cG_2)|_{L'(t)}).$$
There is a valuation $v:(\Gm)_{L'(t)}\to \mathbb{Z}$, which sends $p(t)$ to its valuation at $0\in \bA^1_{L'}$. By definition, the valuation is a group homomorphism from the multiplicative group $(\Gm)_{L'(t)}$ to the additive group $\bZ$. In this way we get a homomorphism
$$v^n:(\Gm^n)_{L'(t)}\longrightarrow \mathbb{Z}^n.$$
Moreover, for every $n\times n$ matrix $M$ with integer coefficients, one has a homomorphism $F_M:(\Gm^n)_{L'(t)}\to (\Gm^n)_{L'(t)}$ defined as
\[(a_{i,j})\cdot (\lambda_1,...,\lambda_n)=((\lambda_1^{a_{1,1}}\lambda_2^{a_{1,2}}\cdots\lambda_n^{a_{1,n}}),\ldots,(\lambda_1^{a_{n,1}}\lambda_2^{a_{n,2}}\cdots \lambda_n^{a_{n,n}})).\]
Similarly, every matrix $M = (a_{i,j})$ gives a homomorphism $G_M:\bZ^n\to \bZ^n$ by matrix multiplication. It is straightforward to check that $v^n\circ F_M=G_M\circ v^n$. In particular, every element $\phi(\gamma)$ induces a homomorphism $G_{\phi(\gamma)}$, and $\phi(\gamma) = \Id$ if and only if $G_{\phi(\gamma)} = \Id$. 
Moreover, as $\Gamma$ acts trivially on $t$, for every $\gamma\in \Gamma$ one has $$v^n(\gamma \ast \lambda)= v^n(\lambda)$$ where we denoted by $\ast$ the action on $(\Gm^n)_{L'(t)}$ induced by the Galois action of $\Gamma$ on $L'$. In particular, using \Cref{lemma points of nonsplit torus using descent}, the valuation $v^n$ sends the $L$-points of $(T_\phi)_{L'(t)}$ to the vectors in $\bZ^n$ that commute with $G_{\phi(\gamma)}$ for every $\gamma \in \Gamma$. Similarly, it sends all the elements in $(\Gm^n)_{R'}$ to 0. Thus, since
\begin{enumerate}
    \item we can identify the automorphisms of the trivial $T_\phi$-torsor with $T_\phi$ as in \Cref{example aut trivial G torsor},
    \item we have the bijection of \Cref{lemma points of nonsplit torus using descent}, and
    \item $v^n(\gamma \ast g)= v^n(g)$,
\end{enumerate}
we have a surjective map
$$(\Gm^n)_{\spec(R')}\backslash (\Gm^n)_{\spec(L'(t))}/\Aut(T_\phi(\spec(L(t))))\to \bZ^n/\{w\in \bZ^n \text{ such that }G_{\phi(\gamma)}(w) = w \text{ for every }\gamma \in \Gamma\}.$$
As there is a $\gamma$ such that $\phi(\gamma)$ is not trivial, there is a $\gamma$ such that $G_{\phi(\gamma)}$ is not trivial. In particular, it cannot fix all the elements in the standard basis of $\bZ^n$: this implies that the right hand side is not a single element, hence the left hand side cannot be a single element.
\end{proof}
\begin{Cor}\label{cor semidirect product is not effective unless phi is trivial}
    Assume that $F$ is a finite group, and let $\phi:F\to \GL_n(\bZ)$ be a homomorphism. Then  $\cB \Gm^n\rtimes_\phi F$ is effective if and only if $\phi$ is trivial.
\end{Cor}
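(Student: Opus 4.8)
The plan is to prove the corollary by reducing the semidirect product $\Gm^n\rtimes_\phi F$ to one of the two cases already handled, depending on whether $\phi$ is trivial. The key observation is \Cref{theorem from semidirect product to nonsplit torus}, which relates the classifying stack of a semidirect product to the classifying stack of a twisted torus via a fiber product over $\cB\Gamma$ (here with $\Gamma=F$). I would set up both directions around this dictionary.

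\medskip

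For the easy direction, suppose $\phi$ is trivial. Then the semidirect product is genuinely a direct product $\Gm^n\times F$, which is a central extension of the finite (split) group $F$ by the split torus $\Gm^n$. This is exactly the situation of \Cref{prop:BG eff}, which already tells us that $\cB(\Gm^n\times F)\to\spec(k)$ is effective. So this direction is immediate once we recognize that trivial $\phi$ collapses the semidirect product into a direct product.

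\medskip

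For the converse, suppose $\phi$ is nontrivial; I want to show $\cB(\Gm^n\rtimes_\phi F)$ is not effective. The natural strategy is to use \Cref{lemma_tensoring_with_gerbe_is_effective_implies_effective} together with \Cref{prop nonsplit tori not eff}. Concretely, applying \Cref{theorem from semidirect product to nonsplit torus} with $\Gamma=F$, the twisted torus $T_\phi$ appears as the fiber $\cG=\spec(k)\times_{\cB F}\cB G$ where $G=\Gm^n\rtimes_\phi F$, and this fiber is $\cB T_\phi$. The morphism $\cB G\to\cB F$ is a gerbe, and $\spec(k)\to\cB F$ is the map classifying the $F$-torsor $\spec(k')\to\spec(k)$ for a suitable finite Galois extension realizing the action. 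Then $\cB T_\phi = \spec(k)\times_{\cB F}\cB G$ is precisely the pullback of the gerbe $\cB G\to\cB F$, so if $\cB G$ were effective, \Cref{lemma_tensoring_with_gerbe_is_effective_implies_effective} (read in the contrapositive, or its hypothesis applied to the gerbe $\cB F\to\spec(k)$) would force $\cB T_\phi\to\spec(k)$ to be effective. But $\phi$ nontrivial means $T_\phi$ is a genuinely non-split torus, and \Cref{prop nonsplit tori not eff} shows $\cB T_\phi$ is \emph{not} effective, a contradiction.

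\medskip

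The main obstacle I anticipate is the bookkeeping in applying \Cref{lemma_tensoring_with_gerbe_is_effective_implies_effective} in the right direction: that lemma says that if $\cX\times_\cY\cZ\to\cZ$ is effective then $\cX\to\cY$ is effective, so I must be careful to identify $\cX\to\cY$ with $\cB G\to\cB F$ (or with $\cB G\to\spec(k)$ composed appropriately) and the separated gerbe $\cZ\to\cY$ with the relevant map, so that the non-effectivity of the fiber $\cB T_\phi$ genuinely obstructs effectivity of $\cB G$. One subtlety is that effectivity of $\cB G\to\spec(k)$ is what we want to rule out, whereas the lemma is phrased relative to a gerbe $\cZ\to\cY$; I expect that taking $\cY=\spec(k)$ and $\cZ=\cB F$ (which is separated since $F$ is finite, hence a separated gerbe over $\spec(k)$), together with the fiber-product description $\cB G\times_{\spec(k)}\cB F$ versus the slice giving $\cB T_\phi$, will align things, but verifying this base-change identification carefully against \Cref{theorem from semidirect product to nonsplit torus} is where the real work lies.
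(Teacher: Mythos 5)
Your easy direction (trivial $\phi$ gives a product, apply \Cref{prop:BG eff}) is exactly the paper's, and your overall skeleton for the converse — combine \Cref{theorem from semidirect product to nonsplit torus} with \Cref{prop nonsplit tori not eff} — is also the paper's. But the transfer step, which you yourself flag as "where the real work lies," is genuinely broken as proposed. \Cref{lemma_tensoring_with_gerbe_is_effective_implies_effective} states: if $\cX\times_{\cY}\cZ\to\cZ$ is effective then $\cX\to\cY$ is effective. You need the implication "$\cB G$ effective $\Rightarrow$ $\cB T_\phi$ effective" so that \Cref{prop nonsplit tori not eff} yields a contradiction; taking $\cB T_\phi$ to be the fiber product, that is the \emph{converse} of the lemma, not its contrapositive, and no instantiation of $\cX,\cY,\cZ$ produces it. Worse, with $\cZ=\cB F\to\cY=\spec(k)$ the fiber product occurring in the lemma is $\cB G\times_{\spec(k)}\cB F\cong \cB(G\times F)$, which is not $\spec(k)\times_{\cB F}\cB G\cong \cB T_\phi$: the two products are taken over different bases. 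What actually works (and is what the paper does, compressed into a citation of \Cref{lm:composition}) is: if $\cB G\to\spec(k)$ were effective, its base change $\cB G_L\to\spec(L)$ would be effective; since this factors through $\cB F_L$, the morphism $\cB G_L\to\cB F_L$ is effective by \Cref{cor composition effective implies the first one effective}; effectivity is stable under base change, so pulling back along $\spec(L)\to\cB F_L$ (the map classifying the torsor $\spec(L')\to\spec(L)$ — a representable finite étale cover, not a gerbe) shows $\cB G_L\times_{\cB F_L}\spec(L)\cong\cB T_\phi\to\spec(L)$ is effective, contradicting \Cref{prop nonsplit tori not eff}. Alternatively, one can note $\cB T_\phi\to\cB G_L$ is finite étale, hence separated, hence effective by \Cref{prop:sep to eff}, and conclude by composition.

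There is a second gap: you work over the ground field $k$ with "a suitable finite Galois extension $k'/k$ realizing the action." Under the paper's conventions $k$ has characteristic $0$ and contains all roots of unity, so $k=\mathbb{C}$ is allowed, and then no nontrivial finite Galois extension of $k$ exists; your setup is vacuous precisely when $\phi$ is nontrivial. The paper avoids this by choosing an auxiliary extension $L\subset L'$ of fields over $k$ with Galois group $F$ (such an $L$ always exists, e.g.\ by letting $F$ permute indeterminates over $k$ and taking invariants), running the twisted-torus argument over $L$, and then using base-change stability of effectivity to carry the contradiction back to $\cB G$ over $k$. Note that \Cref{prop nonsplit tori not eff} is itself only stated over such a field $L$, so your citation of it already tacitly requires this relocation.
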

\begin{proof} If $\phi$ is trivial, then $\cB (\Gm^n\times F)$ is effective from \Cref{prop:BG eff}, so let's assume that $\phi$ is not trivial, let $G:=\Gm^n\rtimes_\phi F$ and consider $L\subset L'$ a field extension with Galois group $F$ (which exists, for example, if $L=\spec(k(t))$). From \Cref{theorem from semidirect product to nonsplit torus} and \Cref{prop nonsplit tori not eff}, the map $\cB G\times_{\cB F}\spec(L)\to \spec(L)$ is not effective. Then from \Cref{lm:composition} the stack $\cB G$ cannot be effective.
\end{proof}
We are ready to prove the main result of this section.
\begin{Teo}\label{Teo BG effective implies G is a central ext of a torus}
    Assume that $G$ is a group such that $\cB G$ is effective. Then the connected component of the identity of $G$ is a split torus contained in the center of $G$.
\end{Teo}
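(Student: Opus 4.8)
The plan is to isolate two \emph{transfer principles} for effectivity and then feed the non-effective examples already established into them. The first principle concerns subgroups: if $H\subseteq G$ is a closed reductive subgroup, then by Matsushima's criterion (char $0$) the homogeneous space $G/H$ is affine, so the induced morphism $\cB H\to\cB G$ is affine, in particular separated, hence effective by \Cref{prop:sep to eff}; composing with $\cB G\to\spec(k)$ (effective by hypothesis) and invoking \Cref{lm:composition}, we conclude that $\cB H$ is effective. The second principle concerns finite-kernel quotients: if $\widetilde H\twoheadrightarrow G$ is a surjection with finite kernel $K$, then $\cB\widetilde H\to\cB G$ is a gerbe with fibres $\cB K$, hence proper and in particular separated, so again effective by \Cref{prop:sep to eff}; composing as before, $\cB\widetilde H$ is effective whenever $\cB G$ is. Applying the first principle to $H=G^0$ (so that $G/G^0$ is finite, hence affine) shows that $\cB G^0$ is effective, and base change along $\spec(\bar k)\to\spec(k)$ via \Cref{lm:composition} makes $\cB G^0_{\bar k}$ effective as well.

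Next I would show that $G^0$ is a \emph{split torus}. Working over $\bar k$ first: if $G^0_{\bar k}$ were not a torus it would contain a root subgroup $H_\alpha$ isomorphic to $\SL_2$ or $\PGL_2$, which is reductive, so the first principle forces $\cB H_\alpha$ to be effective. If $H_\alpha\cong\SL_2$ this contradicts \Cref{prop sl2 not effective} directly; if $H_\alpha\cong\PGL_2$, I would apply the second principle to the isogeny $\SL_2\twoheadrightarrow\PGL_2=H_\alpha$ (with kernel $\bmu_2$) to deduce that $\cB\SL_2$ is effective, again contradicting \Cref{prop sl2 not effective}. Hence $G^0_{\bar k}$, and therefore $G^0$, is a torus. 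Now I descend back to $k$: a torus over $k$ is either split or isomorphic to some $T_\phi$ with $\phi$ nontrivial, and since $\cB G^0$ is effective, \Cref{prop nonsplit tori not eff} excludes the nonsplit case, so $G^0\cong\Gm^n$ is split. Note that the ``torus'' step must be run over $\bar k$ (to produce the root subgroups) while the ``split'' step must be run over $k$, since passing to $\bar k$ destroys nonsplitness of tori.

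Finally I would address centrality. The conjugation action of $G$ on $G^0\cong\Gm^n$ factors through the finite component group as a homomorphism $\bar\psi\colon G/G^0\to\Aut(\Gm^n)=\GL_n(\bZ)$, and the goal is to prove $\bar\psi$ trivial. Using Brion's structure theorem \cite{brionext} I would choose a finite subgroup $F\subseteq G$ with $G=\Gm^n\cdot F$; this produces a surjection $\Gm^n\rtimes_{\bar\psi}F\twoheadrightarrow G$ with finite kernel, so by the second principle $\cB(\Gm^n\rtimes_{\bar\psi}F)$ is effective. Then \Cref{cor semidirect product is not effective unless phi is trivial} forces $\bar\psi$ to be trivial, and since $G=\Gm^n\cdot F$ with $\Gm^n$ abelian, a trivial conjugation action is exactly the assertion that $G^0=\Gm^n$ lies in the center of $G$.

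The main obstacle I anticipate is the centrality step: the only non-effective building block available for the component group, namely \Cref{cor semidirect product is not effective unless phi is trivial}, is phrased for an honest semidirect product, whereas the extension $1\to\Gm^n\to G\to G/G^0\to 1$ need not split. The device that resolves this is Brion's theorem, which replaces $G$ by a finite-kernel quotient of a genuine semidirect product $\Gm^n\rtimes_{\bar\psi}F$; verifying that such a finite $F$ exists in the possibly non-central generality, and that the action it induces on $\Gm^n$ is precisely $\bar\psi$, is the delicate input. The remaining ingredients (affineness of $\cB H\to\cB G$ via Matsushima and separatedness of the finite-kernel classifying gerbes) are comparatively routine.
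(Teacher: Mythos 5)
Your proposal is correct and follows essentially the same route as the paper's own proof: pass effectivity from $\cB G$ to $\cB G^0$ via a separated morphism of classifying stacks, rule out non-tori by producing a finite-kernel map from $\SL_2$ and invoking \Cref{prop sl2 not effective}, rule out non-split tori via \Cref{prop nonsplit tori not eff}, and deduce centrality from Brion's theorem combined with \Cref{cor semidirect product is not effective unless phi is trivial}. The only cosmetic differences are that the paper splits $G^0$ over a \emph{finite} extension of $k$ rather than $\bar{k}$ (which keeps everything within the paper's finite-type conventions) and maps $\SL_2$ isogenously onto the derived subgroup of the rank-one subgroup $H_\alpha$, avoiding your explicit $\SL_2$/$\PGL_2$ case split.
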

\begin{proof}First we show that the connected component of the identity of $G$, which we denote by $H$, is a torus. Observe that the map $\cB H\to \cB G$ is separated, hence effective from \Cref{prop:sep to eff}. Therefore if $\cB G$ was effective, also $\cB H$ would be effective.

Since $G$ is reductive by our conventions,
and $H$ is a normal subgroup of $G$, then $H$ is reductive \cite{Alp21}*{Proposition 6.3.17}. Every reductive connected group which is not a torus admits a homomorphism $\SL_2\to H$ with finite kernel. Indeed, up to a finite base change, we can assume that $H$ is split reductive: by picking a root $\alpha$ of $H$, we can consider the associated split reductive subgroup $H_\alpha\subset H$ of rank one \cite{Mil}*{Theorem 21.11}; there is then a homomorphism $SL_2\to H_\alpha$ which is an isogeny onto the derived subgroup of $H_\alpha$ \cite{Mil}*{Proposition 20.32}, and the composition $\SL_2\to H_\alpha\to H$ has the claimed property. 

Hence we have a separated morphism $\cB\SL_2\to \cB H$. Therefore if $\cB H\to \spec(k)$ was effective, also $\cB\SL_2\to \spec(k)$ would be effective. \Cref{prop sl2 not effective} gives the desired contradiction.

So $H$ is a torus, and from \Cref{prop nonsplit tori not eff} it has to be split. From \cite{brionext}, if we denote by $F:=G/H$, we have a finite and surjective morphism $f:H\rtimes_\phi F\to G$. So $\cB(H\rtimes_\phi F)\to \cB G$ is effective, thus $\cB(H\rtimes_\phi F)$ is effective. It follows then that $\phi$ is trivial from \Cref{cor semidirect product is not effective unless phi is trivial}, and $H$ is contained in the center of $H\rtimes F$; as $f$ is surjective $f(H)$ is contained in the center of $G$ as desired.
\end{proof}
\begin{Cor}\label{cor:stab are extensions of tori}
    Assume that $\cX$ is an algebraic stack over $\spec(k)$ with a separated good moduli space $X$, and assume that $\cX\to X$ is effective. Then the stabilizers of the closed points are central extensions of split tori by finite groups.
\end{Cor}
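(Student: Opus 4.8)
The plan is to reduce the statement to \Cref{Teo BG effective implies G is a central ext of a torus} by showing that, for each closed point $x\in\cX$ with (reductive) stabilizer $G_x$, the classifying stack $\cB G_x$ is effective over the residue field $\kappa:=\kappa(x)$. Once this is established the conclusion is immediate: since $k$ has characteristic $0$ and contains all roots of unity, so does $\kappa\supseteq k$, and \Cref{Teo BG effective implies G is a central ext of a torus} applied over the field $\kappa$ says that the identity component of $G_x$ is a split torus contained in the center, i.e. $G_x$ is a central extension of a finite group by a split torus, which is exactly the asserted structure.

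To produce a presentation amenable to this, I would invoke the \'etale-local structure theorem for good moduli spaces together with Luna's slice theorem: in characteristic $0$ these furnish a pointed \'etale morphism $(\spec(A),w)\to (X,\pi(x))$ with $\kappa(w)=\kappa$ and a cartesian diagram identifying $\cX\times_X\spec(A)$ with $[V/G_x]$, where $V$ is the slice representation of $G_x$ over $\kappa$, where $A=V/\!/G_x$, and where the origin $0\in V$ is a $G_x$-fixed $\kappa$-point whose residual gerbe is $\cB G_x=[0/G_x]$. Because effectivity is stable under base change (\Cref{lm:composition}) and $\cX\to X$ is effective by hypothesis, the good moduli space map $[V/G_x]\to V/\!/G_x$ is effective.

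The key observation is then that $\bigl(\cB G_x\to\spec(\kappa)\bigr)$ is a retract, in the arrow category, of $\bigl([V/G_x]\to V/\!/G_x\bigr)$: the section is the closed immersion $\cB G_x=[0/G_x]\hookrightarrow [V/G_x]$ lying over $\spec(\kappa)\hookrightarrow V/\!/G_x$, while the retraction is induced by the $G_x$-equivariant zero map $V\to 0$, lying over $V/\!/G_x\to\spec(\kappa)$; both horizontal composites are the identity. I would then record the purely formal fact that effectivity is preserved under such retracts. Concretely, given a filling datum $\cP\to\cB G_x$ and $\cT\to\spec(\kappa)$ as in \Cref{def:effective}, one pushes it forward along the section to a filling datum for $[V/G_x]\to V/\!/G_x$, applies effectivity of the latter to obtain a lift $\cT\to[V/G_x]$, and composes with the retraction to get $\cT\to\cB G_x$; the two triangle identities (compatibility on $\cP$ and over $\spec(\kappa)$) drop out of the retract relations $r\circ i=\mathrm{id}$ and $q\circ j=\mathrm{id}$. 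This shows $\cB G_x$ is effective, and combined with the first paragraph it completes the proof.

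I expect the main obstacle to be not the formal retract argument but the reduction to the \emph{linear} slice: one must ensure that, in characteristic $0$, Luna's theorem upgrades the \'etale-local presentation so that $\spec(B)$ is genuinely a $G_x$-representation $V$, since it is precisely the linear structure that supplies the $G_x$-equivariant contraction $V\to 0$ used for the retraction, and for a general affine $\spec(B)$ with a $G_x$-fixed point no such equivariant retraction need exist. A secondary point requiring care is the bookkeeping of residue fields: one should arrange $\kappa(w)=\kappa(x)$ and check that the slice, the quotient $V/\!/G_x$, and the final application of \Cref{Teo BG effective implies G is a central ext of a torus} all take place over the single field $\kappa$.
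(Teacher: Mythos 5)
Your reduction --- prove that $\cB G_x\to \spec(\kappa)$ is effective and then invoke \Cref{Teo BG effective implies G is a central ext of a torus} --- is the right one, and the formal retract argument itself is sound, but the way you set up the retract contains a genuine gap: you insist on a \emph{linear} slice $[V/G_x]$, on the grounds that only the linear contraction $V\to 0$ supplies the equivariant retraction. The corollary assumes no smoothness on $\cX$, and without smoothness at $x$ the \'etale-local structure theorem produces only $[\spec(B)/G_x]$ with $B$ affine and a $G_x$-fixed point $w$; no linear model exists (and even when $\cX$ is smooth, Luna/AHR gives $\spec(B)$ \emph{\'etale over} the tangent representation $V$, not equal to it). So, as written, your argument does not prove the statement in its stated generality, and the repair you propose --- ``upgrade the presentation to a representation'' --- is not available.

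However, your diagnosis of the obstruction is wrong in a way that saves your approach: the retraction never needed linearity. The structure morphism $\spec(B)\to\spec(\kappa)$ is automatically $G_x$-equivariant (the action on the target is trivial), so it induces $r\colon[\spec(B)/G_x]\to\cB G_x$; equivalently, $r$ is the classifying map of the $G_x$-torsor $\spec(B)\to[\spec(B)/G_x]$. Since $r\circ i$ is induced by the equivariant isomorphism $\{w\}\to\spec(\kappa)$, it is the identity, so the retract relations hold for an arbitrary affine slice and your argument then covers general $\cX$. You should also be aware that the paper's own proof is far shorter and avoids slices entirely: the residual gerbe of a closed point is a closed substack of $\cX$, so its inclusion is separated, hence effective (\Cref{prop:sep to eff}); composing with the effective map $\cX\to X$ makes the residual gerbe effective over $X$, this composition factors through the residue-field point of $X$, and \Cref{cor composition effective implies the first one effective} cancels the second factor to give effectivity of the residual gerbe over its field; then \Cref{Teo BG effective implies G is a central ext of a torus} applies, after a harmless finite base change neutralizing the gerbe, which is permitted because effectivity is stable under base change (\Cref{lm:composition}). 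In effect, your retract re-proves that cancellation lemma by hand in the special case of a quotient presentation.
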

\begin{proof}
This follows as the inclusion of the residual gerbe is a closed embedding (hence effective), composition of effective morphisms is effective, and \Cref{Teo BG effective implies G is a central ext of a torus}.
\end{proof}
\begin{Remark}\label{remark_only_fields_needed_to_check_BG_effective}
    Observe that in \Cref{prop nonsplit tori not eff} and \Cref{prop sl2 not effective}, to prove that $\SL_2$ is not effective and that $\Gm^n\rtimes_\phi F$ is effective if and only if $\phi=\Id$, we just used extensions as the ones in \Cref{notation_fields_L_and_Lprime}, and the trivial gerbe $\cT\to \spec(L[t]_{(t)})$. Hence, to show that $\cB G$ is effective, it suffices to prove that given an extension of DVRs as in \Cref{notation_fields_L_and_Lprime}, with local bug-eyed cover $P$, any morphism $P\to \cB G$ descends to $\spec(R)\to \cB G$.
\end{Remark}

\section{From effective to the quotient presentation}\label{section from eff to quotient}
In this section we prove the other direction of \Cref{teo intro}, namely that if a good moduli space morphism $\cX\to X$ is effective, then $X$ is a quotient of a DM stack by a torus (see \Cref{teo effettivo implica ho il gen set}). After introducing a general definition that will be useful later, we will proceed with the main result in two steps. First, in \Cref{ssection gms of dim 0} we will prove the desired result in the case $X=\spec(L)$ is the spectrum of a field. Then, in \Cref{ss extending gset} we prove the general result by spreading out line bundles and using \Cref{prop passo induttivo generating set}.  
\begin{Def}\label{def:generating line bundles}
    We say that an algebraic stack $\cX$ admits $n$ \textit{generating line bundles} if there are $n$ line bundles on $\cX$ such that the corresponding map $\cX\to \cB \Gm^n$ is representable in DM stacks.
\end{Def}
The following lemma motivates the name above.
\begin{Lemma}\label{lm:generating generate}
    Let $G$ be a group and assume that $\cB G$ has a generating set of line bundles. Then any such set form a generating set for the rational Picard group $\Pic(\cB G)_\bQ$; viceversa any set of line bundles whose classes generate $\Pic(\cB G)_{\bQ}$ also form a generating set on $\cB G$ in the sense of \Cref{def:generating line bundles}.
\end{Lemma}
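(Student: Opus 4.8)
The plan is to identify both clauses of the lemma with a single statement about the character lattice of $G$, and then observe that the two are literally the two directions of one equivalence. Since a line bundle on $\cB G$ is the same as a character $G\to\Gm$, we have $\Pic(\cB G)=\Hom(G,\Gm)$, and a choice of $n$ line bundles $L_1,\dots,L_n$ is the same as a homomorphism $\psi=(\chi_1,\dots,\chi_n):G\to\Gm^n$, whose induced map on classifying stacks is the map $\cX\to\cB\Gm^n$ of \Cref{def:generating line bundles}. So I first need to translate ``representable in DM stacks'' into a condition on $\psi$. The relative inertia of $\cB\psi:\cB G\to\cB\Gm^n$ at a point is $\ker\psi$ (the automorphism group $G$ maps to $\Gm^n$ via $\psi$), and since we work in characteristic $0$ a group scheme is unramified if and only if it is $0$-dimensional, i.e. finite; hence $\cB\psi$ is representable in DM stacks precisely when $\ker\psi$ is finite.

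Next I would reduce to the identity component. Write $T:=G^0$ and $F:=G/T$, giving $1\to T\to G\to F\to 1$ with $F$ finite. The standing hypothesis that $\cB G$ already admits \emph{some} generating set produces a homomorphism $G\to\Gm^N$ with finite kernel, which forces $T$ to be a split torus, as a connected group admitting a finite-kernel homomorphism to a torus is itself a torus. Now $\ker\psi$ is finite if and only if $\ker(\psi|_T)=\bigcap_i\ker(\chi_i|_T)$ is finite, because $\ker\psi$ is an extension of a subgroup of the finite group $F$ by $\ker(\psi|_T)$. By the duality between diagonalizable groups and finitely generated abelian groups, $\bigcap_i\ker(\chi_i|_T)$ is finite exactly when its dual $\Hom(T,\Gm)/\langle \chi_i|_T\rangle$ is finite, i.e. precisely when the classes $\chi_1|_T,\dots,\chi_n|_T$ span $\Hom(T,\Gm)\otimes\bQ$.

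The remaining point is to compare $\Pic(\cB G)_\bQ=\Hom(G,\Gm)\otimes\bQ$ with $\Hom(T,\Gm)\otimes\bQ$ through restriction. Since $F$ is finite, the kernel of restriction $\Hom(G,\Gm)\to\Hom(T,\Gm)$ is $\Hom(F,\Gm)$, which is torsion, and its cokernel is controlled by the group cohomology $H^2(F,\Hom(T,\Gm))$, again torsion; hence restriction becomes an isomorphism after $\otimes\bQ$. (The image of restriction lands in the $F$-invariants $\Hom(T,\Gm)^F$, and since the restricted characters must be able to span $\Hom(T,\Gm)\otimes\bQ$, the $F$-action is forced to be trivial rationally, so in fact $T$ is central, consistent with \Cref{Teo BG effective implies G is a central ext of a torus}; but I only need that restriction is a rational isomorphism.) Transporting the span condition of the previous paragraph across this isomorphism, the set $L_1,\dots,L_n$ is a generating set in the sense of \Cref{def:generating line bundles} if and only if $[L_1],\dots,[L_n]$ span $\Pic(\cB G)_\bQ$. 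The two assertions of the lemma are exactly the two directions of this equivalence.

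The main obstacle I expect is the bookkeeping in the first and third steps rather than any deep input: pinning down that ``representable in DM stacks'' is exactly finiteness of $\ker\psi$ (via the identification of relative inertia with $\ker\psi$ and the equivalence of unramified and finite for group schemes in characteristic $0$), and verifying that restriction to $T$ is a rational isomorphism of character groups. Everything else is the standard lattice–diagonalizable group dictionary applied to the split torus $T$.
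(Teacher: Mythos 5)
Your argument is correct in substance, but it takes a genuinely different route from the paper's. The paper never leaves the ambient group $G$: for the forward direction it observes that if $\phi=(\chi_1,\dots,\chi_n):G\to\Gm^n$ has finite kernel $F$ of order $e$, then for any character $\chi$ the power $\chi^e$ kills $F$, hence factors through $\phi$ (characters extend from subgroups of diagonalizable groups), so $\phi^*$ is rationally surjective; for the converse it writes $\chi_i=\sum_j a_{ij}\eta_j$, clears denominators to get an integral matrix $dA$ with $\ell_{dA}\circ\psi=\phi^d$, and concludes that $\ker\psi\subseteq\ker\phi^d$ is finite. You instead reduce everything to the identity component $T=G^0$, convert finiteness of $\ker\psi$ into rational spanning of $\Hom(T,\Gm)\otimes\bQ$ via diagonalizable duality, and transport this across the restriction map $\Hom(G,\Gm)\otimes\bQ\to\Hom(T,\Gm)\otimes\bQ$, which you must show is a rational isomorphism. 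Your route buys a symmetric statement (both clauses of the lemma become one equivalence, proved once), at the price of extra structure theory: you need the standing hypothesis twice, first to make $T$ a split torus and then to force $T$ central, and you need the torsion-cokernel claim. The paper's route is more elementary and needs none of this; your kernel-extension bookkeeping ($\ker\psi$ is an extension of a subgroup of $F$ by $\ker(\psi|_T)$, dualizing on $T$) is all correct.

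One imprecision to fix: the cokernel of $\Hom(G,\Gm)\to\Hom(T,\Gm)^F$ is not controlled by $H^2(F,\Hom(T,\Gm))$. The obstruction to extending an $F$-invariant character $\chi$ of the central torus $T$ to $G$ is the class of the pushout central extension $1\to\Gm\to\chi_*G\to F\to 1$, which lives in $H^2(F,\Gm)$ (coefficients $k^*$ with trivial action), not in $H^2$ with coefficients in the character lattice; the cokernel embeds into that group. Since any $H^2$ of a finite group is killed by $|F|$, your conclusion (restriction is a rational isomorphism) is unaffected, but the justification as written names the wrong group. Alternatively you can avoid cohomology entirely: once $T$ is central, $G/Z(G)$ is finite, so by Schur's theorem $[G,G]$ is finite, and then $\Hom(G,\Gm)\otimes\bQ=\Hom(G/[G,G],\Gm)\otimes\bQ\cong\Hom(T,\Gm)\otimes\bQ$ follows from duality for commutative groups.
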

\begin{proof}
    Let $\cL_1,\ldots,\cL_n$ be a generating set in the sense of \Cref{def:generating line bundles}. Identifying $\Pic(\cB G)_\bQ$ with the vector space of rational characters of $G$, let $\chi_1,\ldots,\chi_n$ be the associated characters, and let $\phi:G\to\Gm^n$ be the associated homomorphism of groups. Observe that $\phi$ induces precisely the DM morphism $\cB G\to\cB \Gm^n$, and that the torsor associated to $\cL_1\oplus\cdots\oplus \cL_n$ is $[(\bA^1\smallsetminus\{0\})^n/G]$, where the action of $G$ is the defined by $\phi$; the fact that this quotient stack is DM is equivalent to saying that the kernel of $\phi$ is finite: let us denote it $F$, and say that it has order $e$.

    Given any character $\chi$ of $G$, we have that $\chi^e(F)=0$, hence the character $\chi^e:G\to\Gm$ factors as $G\overset{\phi}{\to}\Gm^n\to\Gm$. This implies that $\phi^*$ induces a surjective morphism between the spaces of rational characters, and that $\chi_1,\ldots\chi_n$ generate $X(G)_\bQ.$ In terms of classifying stacks, this amounts to saying that $\cL_1,\ldots,\cL_n$ generate $\Pic(\cB G)_\bQ$.

    Now let $\cF_1,\ldots,\cF_m$ be generators for $\Pic(\cB G)_\bQ$, and let $\eta_1,\ldots\eta_n$ be the associated characters, so that there exist coefficients $a_{ij}\in \bQ$ with $\chi_i=\sum a_{ij}\eta_j$. Define $\psi:G\to\Gm^m$ as $g\mapsto (\eta_1(g),\ldots,\eta_m(g))$. There exists an integer $d$ such that the matrix $dA=(da_{ij})$ has integer coefficients and it induces a homomorphism $\ell_{dA}:\Gm^m\to\Gm^n$ having the property that the composition $\ell_{dA}\circ \psi$ is equal to $\phi^d$. This implies, as the kernel of $\phi$ is finite, that also the kernel of $\psi$ must be finite, hence the torsor associated to $\cF_1\oplus\cdots\oplus\cF_m$ is a DM stack, i.e. these line bundles form a generating set in the sense of \Cref{def:generating line bundles}. 
\end{proof}
\subsection{Base step: effective good moduli spaces of dimension zero}\label{ssection gms of dim 0}
In this subsection we prove one direction of \Cref{teo intro}, in the case where the good moduli space of $\cX$ is a point.
\begin{Lemma}\label{lemma la conj holds for gerbes}
    Assume that $f:\cX\to \spec(M)$ is a gerbe, and it is effective. Then there are $n$ generating line bundles on $\cX$.
\end{Lemma}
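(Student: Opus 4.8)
The plan is to combine the structure theorem for effective good moduli space morphisms with the fact that Brauer groups are torsion. Since $\cX\to\spec(M)$ is a gerbe over a field, $\spec(M)$ is its good moduli space and it is separated; as $f$ is effective, \Cref{cor:stab are extensions of tori} applies and shows that the automorphism group $G$ of a geometric point of $\cX$ is a central extension of a split torus by a finite group. In particular $G^0=\Gm^r$ is a central split torus and $\Gamma:=G/G^0$ is finite. Recalling \Cref{def:generating line bundles}, to produce a generating set it suffices to exhibit a map $\cX\to\cB\Gm^r$ that is representable in DM stacks. On automorphism groups such a map is a homomorphism $G\to\Gm^r$, and representability in DM stacks is precisely the condition that this homomorphism have finite kernel. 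Thus I reduce to finding $r$ line bundles $\cL_1,\dots,\cL_r$ on $\cX$ whose \emph{central weights} (the characters by which the central $G^0=\Gm^r$ acts on the fibers) span $X^*(\Gm^r)_\bQ$: if the induced homomorphism restricts to an isogeny on $G^0$, then its kernel meets $G^0$ in a finite group and surjects onto the finite group $\Gamma$, hence is finite.

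Since $\cX$ is connected, the central weight of a line bundle is constant, so there is a well-defined homomorphism $\mathrm{wt}\colon\Pic(\cX)\to X^*(\Gm^r)=\bZ^r$, and the task becomes showing that $\mathrm{wt}$ is rationally surjective. To analyze it I would rigidify $\cX$ by the central torus $G^0=\Gm^r$, producing a factorization $\cX\to\cX_0\to\spec(M)$ in which $\cX\to\cX_0$ is a $\cB\Gm^r$-gerbe and $\cX_0\to\spec(M)$ is a gerbe banded by the finite group $\Gamma$, hence a DM stack of finite type over $M$. Let $\beta\in\oH^2(\cX_0,\Gm^r)$ denote the class of this $\Gm^r$-gerbe. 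For a character $\chi\in X^*(\Gm^r)=\bZ^r$, a line bundle on $\cX$ of central weight $\chi$ exists exactly when the pushforward $\chi_\ast\beta\in\oH^2(\cX_0,\Gm)$ vanishes, i.e. when the induced $\Gm$-gerbe on $\cX_0$ is trivial.

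The key input is that $\oH^2(\cX_0,\Gm)$ is torsion. Indeed, $\cX_0$ is a finite-type DM stack over $M$, and after a finite extension $M'/M$ neutralizing the gerbe one has a finite surjection $\spec(M')\to\cX_0$; composing restriction with the associated transfer shows $\oH^2(\cX_0,\Gm)$ is killed, up to the degree of this cover, by the Brauer group $\oH^2(\spec(M'),\Gm)$, which is torsion. (Alternatively, one argues directly as in \cite{lieblich2008twisted}*{Proposition 3.1.3.3}, used already in the proof of \Cref{prop:BG eff}.) Consequently, for each element $\chi_i$ of a basis of $X^*(\Gm^r)=\bZ^r$ the class $(\chi_i)_\ast\beta$ is killed by some integer, so some multiple $N\chi_i$ lies in the image of $\mathrm{wt}$. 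Taking a common $N\neq 0$, I obtain line bundles $\cL_1,\dots,\cL_r$ on $\cX$ with central weights $N\chi_1,\dots,N\chi_r$. These span $X^*(\Gm^r)_\bQ$, so the induced map $\cX\to\cB\Gm^r$ restricts on $G^0$ to multiplication by $N$, an isogeny; by the reduction of the first paragraph this map is representable in DM stacks, and the $\cL_i$ are the desired $r=n$ generating line bundles.

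The step I expect to require the most care is the second one: setting up precisely the correspondence between line bundles on $\cX$ of a prescribed central weight and trivializations of the pushed-forward $\Gm$-gerbe on the rigidification $\cX_0$, and verifying that $\oH^2(\cX_0,\Gm)$ is genuinely torsion in the stacky setting (the scheme case being \cite{grothendieck1968groupe}*{Corollarie 1.8}). Passing from $\chi_i$ to the multiple $N\chi_i$ is harmless precisely because, as in \Cref{lm:generating generate}, being a generating set depends only on the rational span of the central weights, equivalently on the induced homomorphism to a torus having finite kernel; it is exactly the torsionness of the obstruction that makes this passage legitimate and closes the argument.
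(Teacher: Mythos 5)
Your overall skeleton (rigidify along the central torus, identify line bundles of prescribed central weight with trivializations of the pushed-forward $\Gm$-gerbe, and use torsionness of $\oH^2$ of the DM quotient to produce enough such bundles) is the same as the paper's, and those steps are fine. But there is a genuine gap at the very first move: you pass from \Cref{cor:stab are extensions of tori}, which concerns the automorphism group of a \emph{geometric} point, to rigidifying $\cX$ by ``the central torus $G^0=\Gm^r$'' and to a weight homomorphism $\mathrm{wt}\colon\Pic(\cX)\to X^*(\Gm^r)=\bZ^r$. The object one actually rigidifies by is the identity component $\cG$ of the band of the gerbe, which is a group scheme over $M$ that is a priori only a \emph{form} of $\Gm^r$, i.e.\ a possibly non-split twisted torus $T_\phi$ (and, since the gerbe need not be neutral, one must even argue that this band component is a torus at all, as the paper does via descent). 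If $\cG$ is non-split, your argument collapses: the pushforward $\chi_*\beta$ only makes sense for characters $\chi$ of $\cG$ defined over $M$, i.e.\ for $\phi(\Gamma)$-invariant elements of $\bZ^r$, so the image of $\mathrm{wt}$ lies in $(\bZ^r)^{\phi(\Gamma)}$, which has rank strictly less than $r$; no collection of line bundles on $\cX$ can then have weights spanning $X^*(\Gm^r)_\bQ$, and in fact the conclusion of the lemma is \emph{false} for such bands. The stack $\cB T_\phi$ (with $\phi$ non-trivial, \Cref{notation twisted torus}) illustrates this: its geometric stabilizer is the split torus $\Gm^r_{\overline M}$, exactly the structure your first paragraph extracts, yet $\Pic(\cB T_\phi)=(\bZ^r)^{\phi(\Gamma)}$ has too small a rank and $\cB T_\phi$ admits no generating set.

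What saves the day is that effectiveness rules out non-split bands (\Cref{prop nonsplit tori not eff}), but your proof never uses effectiveness beyond the geometric-stabilizer statement, which cannot distinguish $\cB T_\phi$ from $\cB\Gm^r$. Proving that the band is split over $M$ is precisely the hard core of the paper's proof: there one first shows the gerbe class $c\in\oH^2(\cX',\cG)$ is torsion, uses the multiplication-by-$k$ sequence $1\to F\to\cG\to\cG\to 1$ to build an auxiliary gerbe $\cF\to\cX'$ banded by the finite group $F$ with $\cX\cong(\cF\times\cB\cG)\sslash F$, deduces via \Cref{lemma_tensoring_with_gerbe_is_effective_implies_effective} that $\cB\cG\to\spec(M)$ itself is effective, and only then concludes splitness from \Cref{Teo BG effective implies G is a central ext of a torus}. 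Your proposal needs this (or some substitute argument showing the band is split) inserted before the rigidification step; after that, your torsion and weight-spanning argument goes through and is essentially the paper's.
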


Part of the following argument was suggested by Siddarth Mathur and Minseon Shin to the second author.
\begin{proof}
Observe that $f$ is a good moduli space. Indeed, one can check that a morphism is a good moduli space \'etale locally on the target, so let $\spec(L')\to \spec(L)$ be an extension such that $\cX|_{L'}$ has a section.
By our conventions the stabilizers are reductive, so up to possibly extending $L'$ further, we have that the automorphisms of $\cX|_{L'}$ are a reductive group. Then $\cX|_{L'}\cong \cB G$ for a reductive group $G\to \spec(L')$, and $\cB G\to \spec(L')$ is a good moduli space.

Recall that while the gerbe $\cX\to \spec(M)$ might not be banded, there is a group object $\cZ\to \spec(M)$ satisfying the following two conditions:
\begin{enumerate}
    \item there is an isomorphism $\psi_x:\cZ|_T \to Z(\Aut_T(x))$, for every $x:T\to \cX$, where $Z(\Aut_T(x))$ denotes the center of $\Aut_T(x)$, and
    \item the morphism $\psi_x$ is canonical. Namely, for every isomorphism $\alpha:x\to y$ over $T$, if we denote by $\operatorname{Inn}_\alpha:\Aut_T(x)\to \Aut_T(x) $, $\sigma\mapsto \alpha\sigma \alpha^{-1}$, we have $\psi_y=\operatorname{Inn}_\alpha\circ \psi_x$.
\end{enumerate}
In particular, if $\Aut_T(x)$ is abelian, the gerbe $\cX\to \spec(M)$ is banded (the proof is analogous to \cite[\href{https://stacks.math.columbia.edu/tag/0CJY}{Tag 0CJY}]{stacks-project}).

Let $\cG\subseteq \cZ$ be the connected component of the identity.  For every map $x:\spec(M)\to \cX$ where $M$ is a field, we have that $\cB \Aut_{\cX}(x) \to \spec(M)$ is effective. Indeed, the map $\cB \Aut_{\cX}(x) \to \spec(M)$ is the second projection of $\cX\times_{\spec(L)}\spec(M)\to \spec(M)$, and being effective is stable under base change.

This implies by the characterization of effective classifying stacks (\Cref{Teo BG effective implies G is a central ext of a torus}) that the connected component of the identity of $\Aut_{\spec(M')}(x)$ is a split torus, contained in the center.
Hence $\cG|_{\spec(M')}$ coincides with this connected component and it is a split torus; from \cite{Con14}*{3.1.8} we deduce that also $\cG$ is a torus. We now show that $\cG$ is a split torus.

    We begin by rigidifying $f$ (see \cite{Alp21}*{Section 6.2.8}), so let $\cX':=\cX\sslash\cG$.
    We have a factorization $\cX\xrightarrow{h} \cX'\xrightarrow{g} \spec(M)$ where the diagonal relative to $h$ is represented by torsors under $\cG|_{\cX'}$, and $\cX'$ is DM over $\spec(M)$. As $f$ is effective, by \Cref{cor composition effective implies the first one effective} the morphism $h$ is also effective. The morphism $h$ is banded by $\cG$, let $c\in \oH^2(\cX',\cG)$ be the corresponding element.
    
    We will prove that $c$ is torsion using \cite[\href{https://stacks.math.columbia.edu/tag/03SH}{Tag 03SH}]{stacks-project}.
    Indeed, let $\spec(M)\to \spec(L)$ be an extension such that $\cG|_{\spec(M)}$ becomes a split torus. Then $\cX|_{\spec(M)}\to \cX'|_{\spec(M)}$ is a gerbe banded by $\Gm^n$ for a certain $m$. As $\cX'$ is a DM gerbe over the spectrum of a field, from \cite[Theorem 4.5.1]{Alp21} there exists a
    finite flat cover $\phi:U\to \cX'|_{\spec(M)}$ of some degree $d$ where $U$ is a regular scheme. The composition
    $$\oH^2(\cX',\cG|_{\cX'})\xrightarrow{\phi^*} \oH^2(U,\cG|_U)\cong \oH^2(U,\Gm^m)\xrightarrow{\phi_*}\oH^2(\cX',\cG)$$
    is the multiplication by $d$, so $c\in \oH^2(\cX',\cG|_{\cX'})$ is torsion since $\oH^2(U,\Gm^m)$ is a torsion group, as it agrees with the Brauer group of the regular scheme $U$. Let $k$ be the order of $c$.

    Observe now that, as $\cG$ is abelian, the multiplication by $k$ is an homomorphism; it is surjective with finite kernel as \'etale locally it is such. Specifically, there is an exact sequence $$1\to F\to \cG\xrightarrow{x\mapsto x^k} \cG\to 1$$ 
    on the \'etale site of $\spec(M)$, and $F\to \spec(M)$ is a \textit{finite} group-object over $\spec(M)$, which gives $$\oH^2(\cX',F|_{\cX'})\to\oH^2(\cX',\cG|_{\cX'})\xrightarrow{x\mapsto x^k} \oH^2(\cX',\cG|_{\cX'}). $$
    In particular, there is a gerbe $\cF\to \cX'$ banded by $F|_{\cX'}$ whose image via $\oH^2(\cX',F|_{\cX'})\to\oH^2(\cX',\cG|_{\cX'})$ is $\cX$. More precisely, using \cite[IV.3.1.8]{giraud2020cohomologie}, there is a gerbe $\cF\to \cX'$ banded by $F|_{\cX'}$ such that $$\cX\cong (\cF\times_{\cX'}\cB \cG|_{\cX'})\sslash (F|_{\cX'})\cong (\cF\times_{\spec(M)}\cB \cG)\sslash (F|_{\cX'}).$$
    In particular there is a map $\cF\times_{\spec(M)} \cB \cG\to \cX$ which is a gerbe banded by $F|_{\cX'}$. As $F$ is finite, this map is separated, hence effective. But then also $\cF\times_{\spec(M)}\cB\cG\to \spec(M)$ is effective, as composition of effective morphisms is effective. Therefore, from \Cref{lemma_tensoring_with_gerbe_is_effective_implies_effective} also $\cB\cG\to \spec(M)$ is effective: from \Cref{Teo BG effective implies G is a central ext of a torus} the group $\cG$ is a split torus.

    Therefore the class $c$ of $\cX\to \cX'$ is in $\oH^2(\cX',\Gm^m)$ for a certain $m$. As $c$ is torsion there exists $m$ line bundles on $\cX$ which are $n$-twisted, for $n$ divisible enough \cite{lieblich2008twisted}. This implies that the corresponding morphism $\cX\to \cX'\times \cB \Gm^m$ is relatively DM. As $\cX'\to \spec(L)$ is DM, the composition $\cX\to \cX'\times \cB \Gm^n\to \cB \Gm^n$ is relatively DM.
\end{proof}
\begin{Lemma}\label{lemma extend line bundles quotient}
    Let $B$ be a semi-local ring, and let $\cX\to \spec(B)$ be a good moduli space.  Then: 
    \begin{enumerate}
        \item the restriction map $i^*\colon \operatorname{Pic}(\cX)\to \bigoplus_{\overline{p}\to X} \operatorname{Pic}(\cB \cG_{\overline{p}})$ is injective, where the sum runs over all the closed geometric points $\overline{p}$ of $X$ and $\cB \cG_{\overline{p}}$ is the residual gerbe at $\overline{p}$;
        \item assume furthermore that $B$ is local with closed point $p$, that there exists a group $G\to\spec(k)$ which is a central extension of a split torus by a split finite group, and that the residual gerbe at the unique closed geometric point $\overline{p}$ of $\cX$ is $\cB G_{k(\overline{p})}$. Then if $\cX\cong [\spec(A)/G]$, the restriction map $i^*\colon \operatorname{Pic}(\cX)\to \operatorname{Pic}(\cB G_{k(\overline{p})})$ is an isomorphism.
    \end{enumerate}
    Moreover, with the assumptions of (2), the composition $\varphi:\operatorname{Pic}(\cX)\to \operatorname{Pic}(\cB G_{k(\overline{p})})\to \operatorname{Pic}(\cB G)$ is an isomorphism, where the last morphism is the one given by \Cref{lemma_pic_remains_the_same_after_field_ext}.
\end{Lemma}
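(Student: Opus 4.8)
The plan is to establish the injectivity in (1), then the isomorphism in (2), by combining the descent theorem \cite{alp}*{Theorem 10.3} with the tautological quotient morphism $q\colon\cX\to\cB G$ afforded by the presentation $\cX\cong[\spec(A)/G]$; the moreover part will then follow formally.

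For (1), I would show the kernel of $i^*$ is trivial. Take $\cL\in\Pic(\cX)$ with $i^*\cL=0$, so that $\cL$ restricts to the trivial bundle on the residual gerbe $\cB\cG_{\overline p}$ at every closed geometric point $\overline p$ of $X$. By \cite{alp}*{Theorem 10.3}, a line bundle that is trivial on all such residual gerbes descends from the good moduli space, i.e. $\cL\cong\pi^*\cM$ for some $\cM\in\Pic(\spec(B))$ with $\pi\colon\cX\to\spec(B)$. As $B$ is semi-local, $\Pic(\spec(B))=0$, so $\cM$ and hence $\cL$ is trivial. The same argument with a single closed point gives the injectivity half of (2).

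For the surjectivity in (2), I would use that the structure map $\spec(A)\to\spec(k)$ is $G$-equivariant for the trivial action on $\spec(k)$, hence descends to $q\colon\cX\to\cB G$, inducing $q^*\colon\Pic(\cB G)\to\Pic(\cX)$. The crucial step is to identify the composite $q\circ i$, where $i\colon\cB G_{k(\overline p)}\hookrightarrow\cX$ is the inclusion of the residual gerbe, with the base-change morphism along $k\to k(\overline p)$: the residual gerbe is $\cB G_{k(\overline p)}$ by hypothesis, and $q$ restricts on it to the morphism induced by $\spec(k(\overline p))\to\spec(k)$. Granting this, $i^*\circ q^*=(q\circ i)^*$ is exactly the restriction isomorphism of \Cref{lemma_pic_remains_the_same_after_field_ext} (whose hypotheses hold because $G$ is a central extension of a split torus by a split finite group), so $i^*$ is surjective; together with (1) it is an isomorphism. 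The moreover part is then immediate, as $\varphi$ is the composite of the isomorphism $i^*$ with the inverse of the isomorphism of \Cref{lemma_pic_remains_the_same_after_field_ext}.

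I expect the main obstacle to be the clean identification, in (2), of $q\circ i$ with the base-change morphism $\cB G_{k(\overline p)}\to\cB G$: once this is in place surjectivity is forced by \Cref{lemma_pic_remains_the_same_after_field_ext} and the rest is bookkeeping. A secondary point worth checking is that \cite{alp}*{Theorem 10.3} applies at this level of generality (good moduli space, closed \emph{geometric} points), which is consistent with its use in the proof of \Cref{pic_gerbe_depends_only_on_Pic(Z)}.
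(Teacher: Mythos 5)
Your proposal is correct and follows essentially the same route as the paper: part (1) via \cite{alp}*{Theorem 10.3} plus triviality of line bundles on the semi-local $\spec(B)$, and surjectivity in (2) by factoring the base-change morphism $\cB G_{k(\overline{p})}\to\cB G$ through the quotient presentation $[\spec(A)/G]\to\cB G$ and invoking \Cref{lemma_pic_remains_the_same_after_field_ext}. The one point you flag as a potential obstacle, the identification of $q\circ i$ with the base-change morphism, is exactly the observation the paper makes (and treats as immediate), so there is no gap.
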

\begin{proof}
(1) follows from \cite[Theorem 10.3]{alp}. Indeed, if $i^*\cL$ is trivial, then the action of $\cG_{k(\overline{p})}$ on the fiber of $\cL$ at $\overline{p}$ is trivial, so from \textit{loc. cit.} $\cL$ is pulled back from a line bundle on $\spec(B)$. As the latter is semi-local, every line bundle on $\spec(B)$ is trivial.

For (2), we already know injectivity from (1). To prove surjectivity, consider the composition $\cB G_{k(\overline{p})} \to [\spec(A)/G] \to \cB G$, where the latter morphism is the one induced by the $G$-torsor $\spec(A)\to [\spec(A)/G]$. Observe that this composition coincides with the morphism $\cB G_{k(\overline{p})}\to\cB G$ induced by the field extension $k\subset k(\overline{p})$. Therefore, we have that the composition of pullback maps
\[ \Pic(\cB G) \longrightarrow \Pic([\spec(A)/G]) \longrightarrow \Pic(\cB G_{k(\overline{p})}) \]
is an isomorphism because of \Cref{lemma_pic_remains_the_same_after_field_ext}. It follows then that the last map must be surjective.

The moreover part follows directly from point (2) and \Cref{lemma_pic_remains_the_same_after_field_ext}.
\end{proof}
\color{black}
\begin{Lemma}\label{remark local pic determided by res gerbe}
    With the assumptions and notations of \Cref{lemma extend line bundles quotient} and $B$ local, if $\cX$ admits a generating set of line bundles, then the morphism $\Pic(\cX)_{\bQ}\to\Pic(\cB \cG_{\overline{p}})_{\bQ}$ is an isomorphism. If $\cB \cG_{\overline{p}}$ is effective and $\cG_{\overline{p}}$ is split, then the vice versa hold.
\end{Lemma}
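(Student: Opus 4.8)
The plan is to leverage the two-part structure of \Cref{lemma extend line bundles quotient} to reduce the statement on rational Picard groups to the integral comparison already established there, at least under the quotient hypothesis $\cX \cong [\spec(A)/G]$. First I would argue that, since $\cX$ admits a generating set of line bundles by assumption, the residual gerbe $\cB\cG_{\overline p}$ at the closed point is effective (its classifying map factors the representable-in-DM map $\cX\to\cB\Gm^n$, and being DM over a point is inherited), and moreover by \Cref{Teo BG effective implies G is a central ext of a torus} together with \Cref{cor:stab are extensions of tori} the group $\cG_{\overline p}$ is a central extension of a split torus by a finite group. This is precisely the hypothesis needed to invoke part (2) of \Cref{lemma extend line bundles quotient}, provided we are in the quotient situation; so the first genuine step is to arrange a presentation $\cX\cong[\spec(A)/G]$, which holds étale-locally on $X=\spec(B)$ by the local structure theorem for good moduli spaces \cite{AHREtale}, and after such a presentation part (2) gives that $i^*\colon\Pic(\cX)\to\Pic(\cB\cG_{\overline p})$ is an \emph{integral} isomorphism, hence a fortiori a rational one.

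For the forward direction in general (without assuming a quotient presentation but using the generating set), I would instead prove the two inclusions directly on rational Picard groups. Injectivity of $\Pic(\cX)_\bQ\to\Pic(\cB\cG_{\overline p})_\bQ$ follows immediately from part (1) of \Cref{lemma extend line bundles quotient}, since injectivity is preserved after tensoring with $\bQ$ (indeed $\Pic(\cX)\to\Pic(\cB\cG_{\overline p})$ is already injective because $B$ is local, so every line bundle pulled back from the base is trivial). For surjectivity after $\otimes\bQ$, I would use the generating set: by \Cref{lm:generating generate} the given $n$ line bundles generate $\Pic(\cB\cG_{\overline p})_\bQ$, and since these line bundles live on all of $\cX$, their classes in $\Pic(\cX)_\bQ$ restrict to a generating set of the target; hence $i^*_\bQ$ is surjective. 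Combining the two gives that $i^*_\bQ$ is an isomorphism.

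For the converse (``vice versa''), I assume $\cB\cG_{\overline p}$ is effective with $\cG_{\overline p}$ split, and that the rational restriction map is an isomorphism; the goal is to conclude that $\cX$ admits a generating set of line bundles. Here I would pick line bundles $\cL_1,\dots,\cL_n$ on $\cX$ whose rational classes form a basis of $\Pic(\cX)_\bQ\cong\Pic(\cB\cG_{\overline p})_\bQ$; by \Cref{lm:generating generate}, once we know their restrictions generate $\Pic(\cB\cG_{\overline p})_\bQ$ — which holds because $i^*_\bQ$ is an isomorphism — the kernel of the associated homomorphism $\cG_{\overline p}\to\Gm^n$ is finite, so the induced map $\cX\to\cB\Gm^n$ is representable in DM stacks on the residual gerbe. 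The remaining point is to upgrade representability at the closed residual gerbe to representability over all of $\cX\to\spec(B)$; this follows because the locus where a morphism of finite-type algebraic stacks is representable-in-DM is open, and $X=\spec(B)$ is local, so checking at the unique closed point suffices.

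The step I expect to be the main obstacle is the passage from the closed residual gerbe to all of $\cX$: concretely, guaranteeing that a set of line bundles generating the Picard group of the central fibre residual gerbe genuinely produces a \emph{globally} DM-representable morphism $\cX\to\cB\Gm^n$, rather than merely doing so over the closed point. The cleanest route, which I would pursue, is to reduce to an étale-local quotient presentation $[\spec(A)/G]$ via \cite{AHREtale} so that part (2) of \Cref{lemma extend line bundles quotient} applies integrally and the generating set extends along the base by properness/openness of the DM-representability locus; the delicate bookkeeping is ensuring the integrality in part (2) is compatible with the rational statement we need and that the splitness hypothesis on $\cG_{\overline p}$ is used exactly where \Cref{lemma_pic_remains_the_same_after_field_ext} requires it.
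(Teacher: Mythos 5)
Your forward direction is correct and is essentially the paper's own argument: injectivity comes from part (1) of \Cref{lemma extend line bundles quotient} (preserved under $\otimes\,\bQ$ by flatness), and surjectivity comes from restricting the generating set of $\cX$ along the closed immersion $\cB\cG_{\overline{p}}\hookrightarrow\cX$ and applying the first half of \Cref{lm:generating generate}. The converse, however, has a genuine gap. You deduce finiteness of the kernel of $\cG_{\overline{p}}\to\Gm^n$ from the ``viceversa'' half of \Cref{lm:generating generate}, but that half carries the standing hypothesis that $\cB\cG_{\overline{p}}$ \emph{already possesses} a generating set in the sense of \Cref{def:generating line bundles}: generating $\Pic(\cB\cG_{\overline{p}})_{\bQ}$ alone is not enough. (For $G=\SL_2$ the empty set generates $\Pic(\cB\SL_2)_{\bQ}=0$, yet there is no DM morphism to a trivial power of $\cB\Gm$; this is why the converse is false without the effectiveness hypothesis.) Supplying that standing hypothesis is exactly where effectiveness of $\cB\cG_{\overline{p}}$ must be used, via \Cref{lemma la conj holds for gerbes}, which your proposal never invokes --- indeed you state effectiveness as a hypothesis but never actually use it, which is the symptom of the gap. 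The paper makes this step explicit: by \Cref{lemma la conj holds for gerbes} the effective gerbe $\cB\cG_{\overline{p}}$ has generating line bundles, these are then extended to $\cX$ (up to tensor powers) using surjectivity of the rational restriction map, and DM-ness of the extended torsor follows by the same upper-semicontinuity-plus-locality argument you give, which is fine.

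Separately, the route you advertise as ``cleanest'' in your first and last paragraphs --- reducing to a quotient presentation $[\spec(A)/G]$ \'{e}tale-locally on $\spec(B)$ so that part (2) of \Cref{lemma extend line bundles quotient} applies integrally --- does not work as stated. Replacing $\spec(B)$ by an \'{e}tale cover replaces $\cX$ by a base change, and an isomorphism on Picard groups over the cover says nothing directly about $\Pic(\cX)$: pullback along such covers is in general neither injective nor surjective on Picard groups, so the statement does not descend. The paper's proof uses no quotient presentation anywhere, and neither do your direct arguments; that detour should simply be dropped rather than patched.
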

\begin{proof} 
Let $G:=\cG_{\overline{p}}$.
In one direction, we only have to prove surjectivity. Given a DM morphism $\cX\to\cB\Gm^n$, the composition $\cB G\to \cX \to \cB \Gm^n$ is also DM. In particular, there is a $\Gm^n$-torsor $\cP\to \cB G$ which is a DM stack. As shown in the proof of \Cref{lm:generating generate}, this datum is equivalent to the datum of a homomorphism of groups $\varphi:G\to\Gm^n$ having finite kernel $F$ and such that $\varphi^*:X(\Gm^n)_{\bQ}\to X(G/F)_{\bQ}$ is surjective, hence $\Pic(\cB \Gm^n)_{\bQ}\to \Pic(\cB G)_{\bQ}$ is surjective. As the latter map factors through $\Pic(\cX)_{\bQ}$, we deduce that $\Pic(\cX)_{\bQ}\to\Pic(\cB G)$ is surjective as well. 

In the other direction, suppose that $\Pic(\cX)_{\bQ}\to\Pic(\cB )_{\bQ}$ is surjective: then if there are generating line bundles on $\cB G$, they extend (up to taking high enough tensor powers) to line bundles over $\cX$, and the associated extended torsors will still be DM by upper-semicontinuity of the dimension of the automorphism groups of points. But we know that, thanks to \Cref{lemma la conj holds for gerbes}, for $\cB G$ effective the stack $\cB G$ has generating line bundles.
\end{proof}

\begin{Prop}\label{prop la conj holds for 0 dim gms}
    Assume that $\pi:\cX \to \spec(L)$ is an effective good moduli space map, and $L$ is a field. Then $\cX$ admits $n$ generating line bundles.
\end{Prop}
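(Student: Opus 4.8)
The plan is to reduce the statement to a surjectivity assertion about rational Picard groups and then invoke \Cref{remark local pic determided by res gerbe}. Since $\pi$ is a good moduli space with target a point, $\cX$ has a unique closed point; write $\iota\colon\cB\cG\hookrightarrow\cX$ for the residual gerbe at it, where $\cG$ denotes the stabilizer. As $\iota$ is a closed immersion it is separated, hence effective by \Cref{prop:sep to eff}, and composing with $\pi$ shows that $\cB\cG\to\spec(L)$ is effective; by \Cref{cor:stab are extensions of tori} the group $\cG$ is a central extension of a split torus by a finite group, and by \Cref{lemma la conj holds for gerbes} the gerbe $\cB\cG$ carries a generating set of line bundles. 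In view of \Cref{remark local pic determided by res gerbe} (applied with $B=L$, whose closed point has residual gerbe $\cB\cG$), it therefore suffices to prove that the restriction map $\Pic(\cX)_\bQ\to\Pic(\cB\cG)_\bQ$ is surjective.

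To produce the required classes I would first pass to a quotient presentation over a finite extension. By the \'etale-local structure of good moduli spaces there is a finite Galois extension $L'/L$, with group $\Gamma$, such that $\cX_{L'}\cong[\spec(A)/\cG_{L'}]$; enlarging $L'$ if necessary I may assume in addition that the finite part of $\cG$ becomes split over $L'$, so that $\cG_{L'}$ is a central extension of a split torus by a split finite group. Then \Cref{lemma extend line bundles quotient}(2) applies to $\cX_{L'}\to\spec(L')$ and shows that $\iota_{L'}^*\colon\Pic(\cX_{L'})\to\Pic(\cB\cG_{L'})$ is an isomorphism. Using \Cref{lemma la conj holds for gerbes} over $L'$ I fix generating line bundles $\cL_1,\dots,\cL_n$ on $\cB\cG_{L'}$ and lift them through this isomorphism to line bundles $\widetilde{\cL}_1,\dots,\widetilde{\cL}_n$ on $\cX_{L'}$.

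The heart of the argument is to descend these back to $L$. The morphism $f\colon\cX_{L'}\to\cX$ is finite \'etale and Galois with group $\Gamma$, so the norm furnishes line bundles $N_i:=\operatorname{Nm}_f(\widetilde{\cL}_i)$ on $\cX$ satisfying $f^*N_i\cong\bigotimes_{\gamma\in\Gamma}\gamma^*\widetilde{\cL}_i$. Restricting to the residual gerbe and pulling back along the base-change map $\cB\cG_{L'}\to\cB\cG$ (using that $\cB\cG_{L'}=\cB\cG\times_\cX\cX_{L'}$), the class of $\iota^*N_i$ is identified with $\sum_{\gamma\in\Gamma}\gamma\cdot[\cL_i]\in X(\cG_{L'})_\bQ$. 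Now Galois descent identifies $\Pic(\cB\cG)_\bQ=X(\cG)_\bQ$ with the invariants $(X(\cG_{L'})_\bQ)^\Gamma$, and the averaging projection $\chi\mapsto\tfrac{1}{|\Gamma|}\sum_\gamma\gamma\cdot\chi$ carries the spanning set $\{[\cL_i]\}$ onto a spanning set of these invariants; hence the classes $\iota^*N_i$ generate $\Pic(\cB\cG)_\bQ$. This proves the desired surjectivity. Concretely, the $N_i$ restrict to a generating set on the closed residual gerbe, so the induced map $\cX\to\cB\Gm^n$ is representable in DM stacks at the unique closed point, and by upper semicontinuity of the dimension of the stabilizers it is then DM everywhere, as recorded in \Cref{remark local pic determided by res gerbe}.

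The main obstacle is precisely this descent step: one must pass to a quotient presentation that only exists after an \'etale cover, apply \Cref{lemma extend line bundles quotient} there, and then transfer the line bundles back to $L$ in a way that is still detected on the residual gerbe. The two points that make it work are that the averaged characters $\sum_\gamma\gamma\cdot[\cL_i]$ remain a rational generating set for the $\Gamma$-invariant (equivalently, $L$-rational) characters of $\cG$, and that representability in DM stacks can be checked at the single closed point thanks to upper semicontinuity of stabilizer dimension.
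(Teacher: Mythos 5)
Your proposal follows the same skeleton as the paper's proof: the residual gerbe at the closed point is effective, so \Cref{lemma la conj holds for gerbes} gives generating line bundles on it; one passes to a finite Galois extension $L'/L$ where $\cX$ becomes a quotient stack; \Cref{lemma extend line bundles quotient} extends line bundles from the residual gerbe to the quotient presentation; a Galois norm descends them to $\cX$; and one concludes by restricting to the residual gerbe and using upper semicontinuity of stabilizer dimension. The one genuine difference is where the generating set is chosen: the paper chooses it on the residual gerbe \emph{over $L$}, pulls it back to $L'$, extends and norms, so that the normed bundles restrict on the residual gerbe to $\cL_i^{\otimes|\Gamma|}$ and the final verification is immediate; you choose it over $L'$ and norm down, which forces the averaging computation on character groups.

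That variation creates two gaps as written. First, you assume the residual gerbe is neutral, writing it as $\cB\cG$ for a stabilizer group $\cG$ defined over $L$; this is not automatic and the paper deliberately avoids it (a $\Gm$-gerbe over $\bC(x,y)$ with nonzero Brauer class is an effective good moduli space over a field whose residual gerbe is itself and is not of the form $\cB\cG$). Consequently the identification $\Pic(\cB\cG)_\bQ\cong(X(\cG_{L'})_\bQ)^\Gamma$ via Galois descent does not make sense in general; what survives, and what your argument actually needs, is only an \emph{injection} of the rational Picard group of the residual gerbe into the invariants, obtained by a norm argument. Second, and more subtly, your closing claim that the $N_i$ restrict to a generating set on the residual gerbe does not follow from spanning the invariants: the DM condition in \Cref{def:generating line bundles} is tested on geometric stabilizers, so the restricted classes must span the full rational character group $X(G_{L'})_\bQ$, not merely its $\Gamma$-invariant part; if the Galois action on characters were nontrivial, the common kernel of the invariant characters would contain a positive-dimensional torus and the associated torsor would not be DM. The two spaces coincide here only because effectivity forces the band's torus to be split over $L$ (this is exactly what \Cref{Teo BG effective implies G is a central ext of a torus} and \Cref{prop nonsplit tori not eff} encode, and what makes the paper's ordering of steps work), a fact you never invoke. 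Your proof can still be closed with the tools you already cite: the reduction in your first paragraph — rational surjectivity onto the Picard group of the residual gerbe, combined with the generating set on that gerbe furnished by \Cref{lemma la conj holds for gerbes} — suffices, provided the final step extends powers of \emph{that} generating set rather than claiming the $N_i$ themselves are generating; but the last paragraph should be rewritten accordingly.
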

\begin{proof}
First consider the residual gerbe $j:\cG\to \cX$ at the closed point of $\cX$. Recall that $j$ is a closed embedding. Indeed, it is locally closed from \cite[Proposition 3.5.16]{Alp21}, and by passing to a local quotient presentation \cite{luna}, one can check that it corresponds to the inclusion of the \textit{closed} orbit.
So the composition $\pi\circ j$ is effective.
In particular, from \Cref{lemma la conj holds for gerbes}, there are generating line bundles $\cL_1,\ldots,\cL_n$ on the residual gerbe. We want to prove that they extend to line bundles $\cL_1,...,\cL_n$ on $\cX$.

From \cite{luna}, there is an \'{e}tale cover $\spec(L')\to \spec(L)$ and an isomorphism $[\spec(A)/G]\cong \cX\times_{\spec(L)}\spec(L')$, where $G$ is the automorphism group of the closed point of $\cX\times_{\spec(L)}\spec(L')$, and it is split; up to taking refinements of this cover, we can assume that $\spec(L')\to\spec(L)$ is Galois with Galois group $\Gamma$, and that the closed $G$-orbit of $\spec(A)$ has an $L'$-point. 

Observe moreover that as $\cB G \to [\spec(A)/G] \to \spec(L')$ is effective (the first morphism is the closed immersion of the closed orbit, the second is effective by base change), we deduce from \Cref{cor composition effective implies the first one effective} that $\cB G\to\spec(L')$ is effective, hence from \Cref{Teo BG effective implies G is a central ext of a torus} we have that $G$ is a central extension of a split torus by a finite group. Up to taking a refined cover, we can assume that the finite group is split, hence by \Cref{lemma extend line bundles quotient} the pullback homomorphism $\Pic([\spec(A)/G])\to\Pic(\cB G)$ is an isomorphism.

If we denote by $\cG' :=\cG\times_{\spec(L')}\spec(L)$ and by $\pi: \cG' \to \cG $ the first projection, then $ \cG'\cong \cB G $ is an isomorphism as the residual gerbe pulls back to the residual gerbe; then $\pi^*\cL_i$ extends to a line bundle on $[\spec(A)/G]$ from \Cref{lemma extend line bundles quotient}. We denote these extensions by $\widetilde{\cL_1}, ...,\widetilde{\cL_n}$. 

Consider now the line bundles $\cL_i':=\otimes_{\gamma \in \Gamma}\gamma^*\widetilde{\cL}_i$: these are by construction invariants, hence they descend to $\cX$ along the $\Gamma$-torsor $[\spec(A)/G]\to \cX$. Consider then the $\Gm^n$-torsor on $\cX$ induced by $\cL'_1,\ldots,\cL'_n$: we only need to check that this torsor is DM.

For this, is enough to check that it is DM when restricted to the residual gerbe: this is the case, because as $\widetilde{\cL_i}$ originally comes from $\cG$, it is $\Gamma$-invariant once restricted to $\cG'$, i.e. $\gamma^*\widetilde{\cL}_i|_{\cG'}=\widetilde{\cL}_i|_{\cG'}=\pi^*\cL_i$; this implies that $\cL_i'|_{\cG}\simeq \cL_i^{\otimes |\Gamma|}$, hence the associated $\Gm^n$-torsor is DM because the $\Gm^n$-torsor associated to $\cL_1,\ldots,\cL_n$ is so.
\end{proof}

\subsection{Inductive step: extending a generating set}\label{ss extending gset}In this subsection prove \Cref{teo intro}.

\begin{Lemma}\label{lemma extending gen set}
    Consider a smooth algebraic stack $\cX:=[\spec(A)/G]$ with good moduli space $\cX\to X=\spec(A^G)$, where $G$ is effective and split and $A^G$ is local. Let $U$ be an open subscheme of $\spec(A^G)$. Assume that $\cU:= [\spec(A)/G]\times_{\spec(A^G)}U$ admits $n$ generating line bundles $\{\widetilde{\cL}_i\}_{i=1}^n$. Then there are $n+m$ generating line bundles $\{\cL_i\}$ on $\cX$, such that $(\cL_i)|_{\cU}\cong \widetilde{\cL}_i$ if $i\le n$, and $(\cL_i)|_{\cU}\cong \cO_{\cU}$ otherwise.
\end{Lemma}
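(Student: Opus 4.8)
The plan is to extend the given bundles from $\cU$ to $\cX$, correct them at the unique closed point of $\cX$ by adding boundary divisors, and test the generating property against the residual gerbe there.

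First I would record the reduction. Since $A^G$ is local, $\cX$ has a unique closed point $\overline p$, sitting in the closure of every $G$-orbit, with residual gerbe $i\colon \cB G_{k(\overline p)}\hookrightarrow \cX$. By \Cref{lemma extend line bundles quotient} the pullback $i^*\colon \Pic(\cX)\to \Pic(\cB G_{k(\overline p)})$ is an isomorphism, and together with \Cref{lemma_pic_remains_the_same_after_field_ext} this identifies $\Pic(\cX)_\bQ$ with the finite-dimensional space $\Pic(\cB G)_\bQ$. Because $\overline p$ lies in every nonempty closed $G$-invariant substack, the locus where a map $\cX\to \cB\Gm^N$ fails to be representable in DM stacks --- itself a closed substack --- is empty as soon as it is empty over $\overline p$; hence, using \Cref{lm:generating generate} and \Cref{remark local pic determided by res gerbe}, a finite family $\{\cL_i\}$ is a generating set on $\cX$ precisely when the classes $[\cL_i]$ span $\Pic(\cX)_\bQ$.

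Next, using that $\cX$ is smooth, every line bundle on the open substack $\cU$ extends to $\cX$ (the restriction $\Pic(\cX)\to \Pic(\cU)$ is surjective, exactly as exploited in the proof of \Cref{pic_gerbe_depends_only_on_Pic(Z)}). I would fix extensions $\cL_1,\dots,\cL_n$ of $\widetilde{\cL}_1,\dots,\widetilde{\cL}_n$. The kernel $K:=\ker\bigl(\Pic(\cX)_\bQ\to \Pic(\cU)_\bQ\bigr)$ is spanned by the classes of the irreducible divisors of $\cX$ contained in the complement $\cX\smallsetminus \cU$; each such boundary divisor $D$ provides a line bundle $\cO_\cX(D)$ which is trivial on $\cU$. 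Choosing boundary bundles $\cL_{n+1},\dots,\cL_{n+m}$ whose classes span $K$ then yields a family with exactly the prescribed restrictions to $\cU$.

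It remains to check that $\{\cL_i\}_{i=1}^{n+m}$ spans $\Pic(\cX)_\bQ$. As the last $m$ bundles span $K$, the surjection $\Pic(\cX)_\bQ\twoheadrightarrow \Pic(\cU)_\bQ$ reduces this to showing that the given generating bundles $\widetilde{\cL}_1,\dots,\widetilde{\cL}_n$ already span $\Pic(\cU)_\bQ$. This is the heart of the matter, and the step I expect to be the main obstacle: a generating set need \emph{not} span the rational Picard group of an arbitrary stack. The plan here is to pass to the $\Gm^n$-torsor $\cP\to \cU$ associated with $\widetilde{\cL}_1\oplus\cdots\oplus\widetilde{\cL}_n$; generation makes $\cP$ a smooth Deligne--Mumford stack, and the torsor gives $\Pic(\cU)_\bQ/\langle \widetilde{\cL}_i\rangle\cong \Pic(\cP)_\bQ$, so that $\{\widetilde{\cL}_i\}$ spans $\Pic(\cU)_\bQ$ if and only if $\Pic(\cP)_\bQ=0$. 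Unwinding the torsor, $\cP$ is --- up to a split-torus factor --- a finite-group quotient of a $G$-invariant open of $\spec(A)$, whose coarse space lives over the quasi-affine scheme $U$, an open of the local (hence affine) scheme $\spec(A^G)$. I would then deduce $\Pic(\cP)_\bQ=0$ from the vanishing of the rational class group of this quasi-affine scheme together with the rational injectivity of pullback along finite quotients in characteristic zero. Pinning down the Picard (class) group of the possibly singular good moduli space $U$ --- equivalently, of $\spec(A)$ and its invariant opens --- is the genuinely delicate point of the argument.
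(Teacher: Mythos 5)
Your skeleton is the same as the paper's: extend the $\widetilde{\cL}_i$ to $\cX$ using smoothness, add bundles trivial on $\cU$ (boundary divisor classes spanning the kernel of restriction), and test generation at the unique closed residual gerbe via \Cref{lemma extend line bundles quotient}, \Cref{lm:generating generate} and upper semicontinuity of stabilizer dimensions. The gap is exactly the step you flagged: your construction yields a generating set only if $\widetilde{\cL}_1,\dots,\widetilde{\cL}_n$ already span $\Pic(\cU)_\bQ$, and you do not prove this; you only reduce it to $\Pic(\cP)_\bQ=0$ for the associated torsor $\cP\to\cU$ and hope to deduce that from vanishing of the rational class group of the quasi-affine scheme $U$. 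That vanishing is false in general, because $A^G$ is typically singular and non-factorial even when $\cX$ is smooth. Concretely, let $\Gm$ act on $\bA^4=\spec(k[x_1,x_2,y_1,y_2])$ with weights $(1,1,-1,-1)$, and localize so that $A^G$ is the local ring at the vertex of the cone $\{ad=bc\}$; let $U$ be the punctured spectrum. The preimage $V$ of $U$ is $\{x\neq 0\}\cap\{y\neq 0\}$, on which $\Gm$ acts freely with closed orbits, so $\cU=[V/\Gm]\cong U$ is a scheme with $\Pic(\cU)\cong \operatorname{Cl}(\spec(A^G))\cong\bZ$, generated by the restriction of the line bundle $\cL_\chi$ attached to the standard character. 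Here the empty family (the case $n=0$), or the trivial bundle, is a generating set for $\cU$, yet it spans nothing, and your reduction target fails: $\Pic(\cP)_\bQ=\Pic(\cU)_\bQ=\bQ\neq 0$.

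Worse, for this choice of generating set on $\cU$ the conclusion of the lemma itself fails, so no argument can close the gap without an extra hypothesis: in the example $\Pic(\cX)\cong\Pic(\cB\Gm)\cong\bZ$ by \Cref{lemma extend line bundles quotient}, the restriction $\Pic(\cX)_\bQ\to\Pic(\cU)_\bQ$ is an isomorphism, and any generating set on $\cX$ must contain a bundle whose class is nonzero (its restriction to the closed residual gerbe must span $\Pic(\cB\Gm)_\bQ$ by \Cref{lm:generating generate}), hence whose restriction to $\cU$ is nontrivial; so no generating set on $\cX$ can restrict trivially to $\cU$. Note that the paper's own proof silently makes the same unjustified assertion (``we can pick the $\cL_i$ in such a way that $\cL_i|_{\cU}$ is trivial for $i>n$''), which is equivalent to the span condition you isolated. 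So your instinct about where the difficulty lies is exactly right, but the way out is not a cleverer proof of $\Pic(\cP)_\bQ=0$: one must assume (or verify in the application, i.e.\ in \Cref{prop passo induttivo generating set}) that the given generating set rationally spans $\Pic(\cU)_\bQ$, and track that condition through the induction.
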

\begin{proof}
    As $\cX$ is smooth, we have a surjection $\Pic(\cX)_{\bQ}\to \Pic(\cU)_{\bQ}$. In particular, we can extend the generating line bundles $\widetilde{\cL}_1,\ldots,\widetilde{\cL}_n$ on $\cU$ to line bundles $\cL_1,\ldots,\cL_n$ on $\cX$.

    By \Cref{lemma extend line bundles quotient} we have an isomorphism $\Pic(\cX)\overset{\cong}{\to}\Pic(\cB G)$: in particular, as the $\bQ$-vector space $\Pic(\cB G)_{\bQ}$ is finitely generated, we must have $\Pic(\cX)_{\bQ}\simeq \bQ^{n+m}$ for some value of $m$. Complete $\cL_1,\ldots,\cL_n$ to a basis $\cL_1,\ldots,\cL_{n+m}$ of $\Pic(\cX)_{\bQ}$; we can pick the $\cL_i$ in such a way that $\cL_i|_{\cU}$ is trivial for $i>n$.
    
    Observe moreover that $\cB G$ has a generating set of line bundles because of \Cref{lemma la conj holds for gerbes}. Then, as the restriction of $\cL_1,\ldots,\cL_{n+m}$ generate the rational Picard group of $\cB G$, by \Cref{lm:generating generate} they also form a generating set for $\cB G$ in the sense of \Cref{def:generating line bundles}, i.e. the torsor associated to the restriction of $\cL_1\oplus\cdots\oplus\cL_{n+m}$ is DM. By upper-semicontinuity of the dimension of the stabilizer groups for $G$-actions, the torsor associated to $\cL_1\oplus\cdots\oplus\cL_{n+m}$ must be DM too.
    \end{proof}
\begin{Prop}\label{prop passo induttivo generating set}
    Let $\cX\to \spec(R)$ be a good moduli space with $R$ local ring and $\cX$ smooth. Let $p$ be the closed point of $\spec(R)$ and $U=\spec(R)\smallsetminus\{p\}$ be it its complement. Set $\cU:=U\times_{\spec(R)}\cX$, and suppose that $\cU$ admits a generating set. Then also $\cX$ admits a generating set.
\end{Prop}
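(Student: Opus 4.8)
The plan is to reduce to the global--quotient situation already handled by \Cref{lemma extending gen set} and then descend. Write $\cX_p$ for the fibre over the closed point $p$, so that $\cU=\cX\smallsetminus\cX_p$. Since $\cX\to\spec(R)$ is a good moduli space and $R$ is local, $\cX$ has a \emph{unique} closed point $x_0$, which lies in $\cX_p$ and hence not in $\cU$: as the good moduli space morphism is universally closed, any closed point of $\cX$ maps to $p$, and the fibre $\cX_p$ (whose good moduli space is $\spec(k(p))$) has a single closed point. The upshot of this observation is that it suffices to produce finitely many line bundles on $\cX$ whose associated map $\cX\to\cB\Gm^N$ is representable in DM stacks over $\cU$ and at $x_0$: the locus on which such a map is representable in DM stacks is open, by upper semicontinuity of the dimension of the automorphism groups, so an open DM locus containing $\cU\cup\{x_0\}$ has closed complement disjoint from the unique closed point, hence is all of $\cX$.

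First I would reduce to a quotient presentation. By the \'etale local structure theorem \cite{luna}, there is an \'etale morphism $\spec(R')\to\spec(R)$, which (as in the proof of \Cref{prop la conj holds for 0 dim gms}) we may take to be Galois with group $\Gamma$, together with an isomorphism $\cX_{R'}:=\cX\times_{\spec(R)}\spec(R')\cong[\spec(A)/G]$ with $A^{G}=R'$, where $G$ is the stabilizer of $x_0$. Since $\cX\to\spec(R)$ is effective, so is $\cX_{R'}\to\spec(R')$ by \Cref{lm:composition}, whence by \Cref{cor:stab are extensions of tori} the group $G$ is a central extension of a split torus by a finite group; after refining the cover we may assume the finite group is split, so that $G$ is split. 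The given generating set on $\cU$ pulls back to a generating set on $\cU_{R'}:=\cU\times_{\spec(R)}\spec(R')$, because representability in DM stacks is stable under base change. Applying \Cref{lemma extending gen set} to $\cX_{R'}=[\spec(A)/G]$ and the open $U_{R'}\subset\spec(R')$ then produces generating line bundles $\cL_1,\dots,\cL_{n+m}$ on $\cX_{R'}$ whose restrictions to $\cU_{R'}$ are the pullbacks of the given generators for $i\le n$ and trivial for $i>n$.

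Next I would descend along the Galois cover $\cX_{R'}\to\cX$ by averaging: set $\cL_i':=\bigotimes_{\gamma\in\Gamma}\gamma^*\cL_i$. These bundles are $\Gamma$-invariant, hence descend to line bundles $\cM_i$ on $\cX$. The role of splitness is that, by \Cref{lemma_pic_remains_the_same_after_field_ext}, the residue-field extension does not change $\Pic(\cB G)$, and $\Gamma$ therefore acts trivially on $\Pic(\cB G_{R'})_{\bQ}$; consequently the class of $\cL_i'$ restricted to the residual gerbe of $x_0$ equals $|\Gamma|$ times that of $\cL_i$, so the restrictions of $\cL_1',\dots,\cL_{n+m}'$ still generate $\Pic(\cB G_{R'})_{\bQ}$. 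By \Cref{lm:generating generate} together with the upper-semicontinuity argument of \Cref{lemma extending gen set}, the $\cL_i'$ form a generating set on $\cX_{R'}$. Finally, representability in DM stacks descends along the faithfully flat morphism $\cX_{R'}\to\cX$, and the torsor attached to $\bigoplus_i\cM_i$ pulls back to the one attached to $\bigoplus_i\cL_i'$; hence $\cM_1,\dots,\cM_{n+m}$ form a generating set on $\cX$.

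The main obstacle is the descent in the last step, and more precisely arranging the quotient presentation so that the averaging preserves the generating property. The crucial input is that $G$ is split, which via \Cref{lemma_pic_remains_the_same_after_field_ext} forces $\Gamma$ to act trivially on the rational Picard group of the residual gerbe; without this one would recover after averaging only the $\Gamma$-invariant part of $\Pic(\cB G)_{\bQ}$, which need not suffice to generate at $x_0$. A secondary technical point is the choice of the \'etale neighborhood: one must ensure the Galois cover can be taken so that \Cref{lemma extending gen set} applies, i.e. keeping the invariant ring local, or else handling the finitely many closed points of a semilocal Galois closure separately by means of \Cref{lemma extend line bundles quotient}.
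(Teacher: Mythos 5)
There is a genuine gap, and it sits exactly where you dismiss it as ``a secondary technical point'': the reduction to a finite Galois quotient presentation. The \'etale local structure theorem of \cite{luna} produces an \'etale, quasi-finite morphism $\spec(A^G)\to\spec(R)$ together with a cartesian square $[\spec(A)/G]\to\cX$; it does \emph{not} produce a finite \'etale cover, and over a general local ring $R$ (unlike over a field, which is the situation of \Cref{prop la conj holds for 0 dim gms} that you invoke) one cannot refine an \'etale neighborhood to a finite Galois one: \'etale covers of $\spec(L)$ are automatically finite and admit Galois closures, whereas an \'etale quasi-finite $\spec(A^G)\to\spec(R)$ with $\dim R\geq 1$ is typically an open piece of a finite cover and nothing more. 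Consequently your assumption ``$\cX_{R'}\cong[\spec(A)/G]$ with $A^G=R'$ and $\spec(R')\to\spec(R)$ Galois with group $\Gamma$'' is unjustified, and with it the entire descent step: without a finite \'etale $\Gamma$-cover, $\cX_{R'}\to\cX$ is not a $\Gamma$-torsor, so averaging and descending equivariant line bundles does not apply as stated.

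This is precisely the difficulty the paper's proof is built to overcome. It applies the enhanced Zariski main theorem (\Cref{teo zmt enhanced}) to compactify the \'etale morphism $j:\spec(A^G)\to\spec(R)$ inside a \emph{finite} morphism $\spec(R_F)\to\spec(R)$, which after normalization is only \emph{generically} Galois and is semilocal, with several closed points $p_1,\dots,p_d$ permuted transitively by $\Gamma$. One must then glue generating sets constructed at each $p_i$ along the generic fibre (using that the extra line bundles are generically trivial), and finally descend not along a torsor but along the coarse moduli space $[\spec(R_F)/\Gamma]\to\spec(R)$ -- which requires normality of $\spec(R)$ (this is where smoothness of $\cX$ enters) and replacing the line bundles by suitable powers. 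None of these steps is optional, and your suggested fallback of ``handling the finitely many closed points of a semilocal Galois closure separately by means of \Cref{lemma extend line bundles quotient}'' is not a proof: the object you would need a Galois closure of is not finite over $\spec(R)$ to begin with. The parts of your argument that are sound (uniqueness of the closed point of $\cX$, openness of the DM locus, the use of \Cref{lemma extending gen set} on the quotient chart, and $\Gamma$-invariance of the restricted classes via \Cref{lemma_pic_remains_the_same_after_field_ext}) all survive, but they must be rebuilt on the ZMT compactification rather than on a nonexistent finite Galois cover.
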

Before proceeding with the proof, we recall the following fact, which is a slight improvement of Zariski's main theorem.
\begin{Teo}\label{teo zmt enhanced}
    Let $X$ be a separated DM stack and $Y\to X$ an \'etale, quasi-finite surjective morphism from a Noetherian scheme $Y$. Then there is a finite morphism $F\to X$ such that the second projection $Y\times_X F\to F$, when restricted to the connected components of $Y\times_X F$, is an open embedding.
\end{Teo}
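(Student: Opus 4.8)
The plan is to bootstrap from the classical Zariski's Main Theorem by peeling off one ``diagonal sheet'' at a time and iterating, controlling the process by a lexicographic induction. Throughout I would work with representable morphisms of separated DM stacks (equivalently, with algebraic spaces finite over the base), since the intermediate objects produced below need not be schemes; note that $Y\to X$ is automatically representable because $Y$ is a scheme and $X$ is DM. First I would apply ZMT to factor $Y\xhookrightarrow{j}\overline{Y}\xrightarrow{\pi} X$, with $j$ an open immersion and $\pi$ finite; replacing $\overline{Y}$ by the closure of $Y$, I may assume $Y$ is dense in $\overline{Y}$, so that $\overline{Y}\smallsetminus Y$ is nowhere dense.

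Next I base change $Y\to X$ along $\pi$ and consider the \'etale, separated morphism $\mathrm{pr}_2\colon Y\times_X\overline{Y}\to\overline{Y}$. The pair $(\mathrm{id}_Y,j)$ defines a section $s\colon Y\to Y\times_X\overline{Y}$ of the finite (hence separated) projection $\mathrm{pr}_1$; thus $s$ is a closed immersion, and over the open $Y\subseteq\overline{Y}$ it coincides with the diagonal of the unramified morphism $Y\to X$, so it is an open immersion there. Therefore $s(Y)$ is open and closed in $Y\times_X\overline{Y}$, it is a union of connected components, and $\mathrm{pr}_2|_{s(Y)}=j$ is an open immersion. Setting $W:=(Y\times_X\overline{Y})\smallsetminus s(Y)$, an open and closed subspace, the morphism $W\to\overline{Y}$ is again \'etale, quasi-finite and separated. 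If I can find a finite $F'\to\overline{Y}$ for which $W\times_{\overline{Y}}F'\to F'$ is componentwise an open immersion, then I take $F:=F'$: it is finite over $X$ (finite over finite), and
\[
Y\times_X F=\big(s(Y)\times_{\overline{Y}}F'\big)\ \sqcup\ \big(W\times_{\overline{Y}}F'\big),
\]
where the first summand is a base change of the open immersion $j$, and the second is componentwise an open immersion by the choice of $F'$. This reduces the statement for $Y\to X$ to the same statement for $W\to\overline{Y}$.

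To make the iteration terminate I would attach to any such $g\colon Z\to S$ the invariant $(d(g),\delta(g))$, ordered lexicographically, where $d(g)=\max_{\bar s}\#\,g^{-1}(\bar s)$ is the largest geometric fiber cardinality and $\delta(g)$ is the dimension of the closure of the locus where $d(g)$ is attained. Since fiber cardinality is lower semicontinuous for quasi-finite separated morphisms, this maximal locus is open, hence it contains the generic points of the components of $S$ that it meets; over those generic points $g$ is finite (a quasi-finite separated scheme over a field is finite), so there are no ZMT-boundary points lying over them. Consequently, after one peeling step the locus where $W\to\overline{Y}$ still attains $d$ is contained in the nowhere-dense $\overline{Y}\smallsetminus Y$ and avoids the generic points of the preimage of the old maximal locus; thus either $d$ drops, or $d$ is unchanged and $\delta$ drops strictly. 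The base case $d\le 1$ is immediate, as then $g$ is an open immersion and one may take $F=S$.

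The main obstacle is exactly verifying that this lexicographic invariant strictly decreases: the naive degree need \emph{not} drop, because the non-finite boundary $\overline{Y}\smallsetminus Y$ can keep the maximal fiber cardinality constant, and the crux is to show that every point realizing the maximal cardinality is ``generic enough'' — via lower semicontinuity together with finiteness over generic points — that peeling the diagonal forces the residual maximal locus into the strictly lower-dimensional boundary. A secondary technical point is to run ZMT, the section/diagonal argument, and the semicontinuity statement in the category of separated algebraic spaces and DM stacks rather than schemes, and to ensure $Y\to X$ is separated (which holds because $X$ has affine, hence finite, diagonal), as this is what makes $s$ a closed immersion.
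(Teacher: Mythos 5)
Your construction is correct, and its skeleton is exactly the classical argument that the paper's proof consists of citing (the proof of \cite{Alp21}*{Theorem 4.5.1}): factor $Y\hookrightarrow \overline{Y}\to X$ by Zariski's Main Theorem with $Y$ dense, note that the section $s=(\mathrm{id},j)$ of $\mathrm{pr}_1$ is a closed immersion (section of a separated morphism) which over $Y\subseteq\overline{Y}$ is the open diagonal of the \'etale morphism $Y\to X$, split off the clopen $s(Y)$, and recurse on $W\to\overline{Y}$. Where you genuinely depart from the cited proof is the termination argument, and there your self-identified ``main obstacle'' is in fact not an obstacle: the maximal geometric fiber cardinality $d$ \emph{does} drop strictly at each peeling step, so the induction on $d$ alone closes. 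Indeed, let $\bar y\in\overline{Y}\smallsetminus Y$ lie over $\bar x$. By the \'etale-local structure of quasi-finite separated morphisms, after passing to a connected \'etale neighbourhood of $\bar x$ (which preserves density and fiber counts) one can write $Y=F\sqcup R$ with $F\to X$ finite \'etale of constant degree $\#Y_{\bar x}$ and $R_{\bar x}=\emptyset$; then $F$ is open \emph{and} closed in $\overline{Y}$ (being proper over $X$ and hence closed in the separated $\overline{Y}$), so $\bar y$ lies in the closure of $R$, and every neighbourhood of $\bar x$ meets the open set $\mathrm{im}(R)$, over which $Y\to X$ has at least $\#Y_{\bar x}+1$ points. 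Thus $\#Y_{\bar x}\le d-1$ whenever a boundary point lies over $\bar x$: the ZMT-boundary never sits over the maximal-degree locus, and $W\to\overline{Y}$ has maximal degree $\le d-1$. Your lexicographic invariant $(d,\delta)$ is therefore a correct but unnecessary workaround; your verification of its decrease does go through (it needs all spaces in play to be finite dimensional, which is guaranteed by the paper's convention that everything is essentially of finite type over $k$), it is just strictly weaker than what the peeling step already gives you.

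Two smaller caveats. First, lower semicontinuity of the fiber cardinality on the target holds for \'etale (more generally flat) quasi-finite separated morphisms, not for arbitrary quasi-finite separated ones (a closed immersion is a counterexample); this is harmless here since all the morphisms you apply it to are \'etale. Second, your $F$ need not be surjective over $X$: in the recursion $F'$ lives over the closure of $\mathrm{im}(W)$, which may be a proper closed substack of $\overline{Y}$. As literally stated the theorem tolerates this (it is even satisfied by $F=\emptyset$), but the paper's application in \Cref{prop passo induttivo generating set} requires a component of $F$ dominating $X$, so surjectivity is the intended content. It is easy to restore: at each stage enlarge $F'$ by the reduced closed substack $\overline{Y}\smallsetminus\mathrm{im}(W)$, over which $W$ has empty fibers, so the splitting condition is vacuous there; the resulting $F$ is then finite and surjective whenever $Y\to X$ is.
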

The proof is essentially the proof of \cite[Theorem 4.5.1]{Alp21} (see in particular the second diagram of page 178). Note that among the hypotheses in \textit{loc. cit} it is required the stack $X$ being of finite type, but the whole proof works as well with the hypotheses above.
\begin{proof}[Proof of \Cref{prop passo induttivo generating set}] First observe that the stabilizer of the geometric closed point of $\cX$ is an effective group, which then is a central extension of a split torus by a finite split group (\Cref{Teo BG effective implies G is a central ext of a torus}). So there is a finite extension $L$ of the residue field of $R$, such that the residual gerbe of $\cX\times_{\spec(R)}\spec(L)$ is $\cB G$, where $G$ is an effective group. From the main result of \cite{luna}, there is a cartesian diagram as follows, with \'etale surjective horizontal arrows, where $A^G$ has a unique closed point:
$$\xymatrix{[\spec(A)/G]\ar[d]\ar[r] & \cX\ar[d] \\ \spec(A^G) \ar[r]^j & \spec(R).}$$
We can now apply \Cref{teo zmt enhanced} to $j$ to get the following diagram, where every square is cartesian
$$\xymatrix{\cF' \ar[rr] \ar[dd]\ar[dr]^{i_F} & & [\spec(A)/G]\ar[dr]^i\ar[dd] &\\
 & \cF\ar[dd]\ar[rr] & &\cX\ar[dd]_\pi \\
 \spec(R_F\otimes_R A^G)\ar[dr]^-{j_F}\ar[rr] & & \spec(A^G)\ar[dr]^-j & \\
 & \spec(R_F) \ar[rr] & & \spec(R).}$$
 We denote by $\pi_F$ the morhphism $\cF\to \spec(R_F)$; and up to replacing $R_F$ with the normalization of one of the connected components of $\spec(R_F)$ which dominate $\spec(R)$, we can assume that $R_F$ is normal. Moreover, up to further extending, we can assume that $\spec(R_F)\to\spec(R)$ generically is Galois with Galois group $\Gamma$. In particular, $\Gamma$ acts on $\spec(R_F)$, $\spec(R_F\otimes_R A^G)$, $\cF$ and $\cF'$ in a way such that all the arrows in the diagram above are equivariant. Finally, by assumption, $i_F$ and $j_F$ are open embeddings once restricted to each connected component of their domains. We first plan on descending a generating set on $\cF'$ to $\cF$, using \Cref{lemma extending gen set}.

 First, we restrict $\pi$ to $\cU$. Let $m$ be the number of line bundles in the generating set of $\cU$. We can pull back this set to $\cU_A:= \cU\times_{\cX}[\spec(A)/G]$, which is by construction a quotient stack by $G$, and extend it to a generating set for $[\spec(A)/G]$ using \Cref{lemma extending gen set}; now we have $n+m$ line bundles on $[\spec(A)/G]$ which are a generating set. We can pull back this generating set to $\cF'$, and now  so we have $n+m$ line bundles $\cL_1,...,\cL_{n+m}$ on $\cF'$. 

  We now study the action of $\Gamma$ on $\cL_i$. Consider the inclusion of the closed residual gerbe
  $\cB G_L\to [\spec(A)/G]$, where $L$ is a field, and its pull-back $\bigsqcup_{\ell = 1}^f \cB G'_{L'} = \cB G_L\times_{[\spec(A)/G]}\cF'$.
 There is an action of $\Gamma$ on $\bigsqcup_{\ell = 1}^f \cB G'_{L'}$, which acts transitively on its topological space. From \Cref{lemma extend line bundles quotient}, there are injective morphisms $\Pic(\cF')\to \bigoplus_{\ell = 1}^f  \Pic(\cB G'_{L'})$ so one can understand
 the action of $\Gamma$ on $\cL_i$ by understanding its action on $\bigsqcup_{\ell = 1}^f \cB G'_{L'}$. In particular, $\Gamma$ acts trivially on $\cL_i$, as they are pulled back from $[\spec(A)/G]$, namely $\bigotimes_{\gamma \in \Gamma}\gamma^*\cL_{i} = \cL_i^{\otimes|\Gamma|}$. Therefore $\{\cL_i\}$ is a $\Gamma$-invariant generating set and the characters induced by $\cL_i$ on each connected component of $\bigsqcup\cB G'_{L'}$  are the same.

 The scheme $\spec(R_F)$ has an open subset $U_F=U\times_{\spec(R)}F$ and $d$ closed points $p_1,...,p_d$, and the action of $\Gamma$ is transitive on the closed points. For each of the points $p_i$, we can take an open subscheme $\spec(R_i)$ of a connected component of $\spec(R_F\otimes_R A^G)$ such that the composition $\spec(R_i)\to \spec(R_F)$ is the localization of $\spec(R_F)$ at $p_i$. So each $\cF_i:=\spec(R_i)\times_{\spec(R_F\otimes_R A^G)}\cF'$ has $n+m$ generating line bundles,
 and we plan on gluing those along the generic points of $\spec(R_i)$ for every $i$ to get $n+m$ generating line bundles on $\cF$. But the $n+m$ generating line bundles we choose glue along the generic fiber of $\cF_i\to \spec(R_i)$: the first $n$ of them came from line bundles on $\cX\times_{\spec(R)}\cU$, and the other $m$ restrict to the trivial line bundle generically. So $\cF$ admits a generating set, we denote it by $\cG_1,...,\cG_{n+m}$, where the first $m$ line bundles are the pull-back of a generating set of $\cX\times_{\spec(R)}\cU$.

 We now plan on descending a generating set from $\cF$ to $\cX$. Recall that we have an action of $\Gamma$ on $\spec(R_F)$, so we have the following diagram, where all squares are cartesian:
 $$\xymatrix{\cF \ar[d] \ar[r] &\cX' \ar[r] \ar[d] & \cX\ar[d]^\pi \\ \spec(R_F)\ar[r] & [\spec(R_F)/\Gamma]\ar[r] & \spec(R).}$$
 The data of a line bundle on $\cX'$ is the data of a $\Gamma$-equivariant line bundle on $\cF$. So for $i\le n$ we define $\cG_i':=\cG_i$ and for $1\ge i \ge n$ we define $\cG_{m+i}':=\bigotimes_{\gamma \in \Gamma}\gamma^*\cG_{m+i}$. Now the line bundles $\cG_i'$ admit an action by $\Gamma$. To check that $\{\cG_{m+i}'\}$ is still a generating set we pull them back to $\cF$. Since $\cL_i$ was a $\Gamma$-\textit{invariant} generating set, the pull-backs of $\cG_i'$ forms a generating set.
 
 So they descend to $n+m$ line bundles on $\cX'$, which we denote by $\cH_1,...,\cH_{n+m}$. Since $\cX$ is smooth, the scheme $\spec(R)$ is normal, so $[\spec(R_F)/\Gamma]\to \spec(R)$ is a coarse moduli space. So up to replacing $\cH_i$ with some powers, they descend to a generating set on $\cX$ as desired.\end{proof}

\begin{theorem}\label{teo effettivo implica ho il gen set}
Assume that $\cX$ is a smooth algebraic stack admitting a good moduli space $\pi:\cX\to X$. If $\pi$ is effective, then $\cX$ has a generating set, namely it admits a morphism $\cX\to \cB \Gm^n$ for $n$ big enough, which is representable in DM stacks.
\end{theorem}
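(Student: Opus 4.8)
The plan is to reduce the global statement to the local one already established in \Cref{prop passo induttivo generating set} by a Noetherian induction on the good moduli space $X$, controlled by the closed locus where a candidate family of line bundles fails to be a generating set. Given a finite set $S=\{\cL_1,\dots,\cL_r\}$ of line bundles on $\cX$, let $B(S)\subseteq X$ denote the image of the closed substack of $\cX$ where the stabilizer of a point acts on $\cL_1\oplus\cdots\oplus\cL_r$ through a homomorphism with positive-dimensional kernel; equivalently, $X\smallsetminus B(S)$ is the largest open over which the morphism $\cX\to\cB\Gm^r$ attached to $S$ is representable in DM stacks. By upper semicontinuity of the dimension of the stabilizers (as used in \Cref{lemma extending gen set} and \Cref{prop passo induttivo generating set}) this bad locus is closed and saturated, hence descends to a genuine closed subset of $X$ via the universally closed map $\pi$. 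A generating set is then exactly a family $S$ with $B(S)=\varnothing$.

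Since $X$ is Noetherian, the collection $\{B(S)\}$ of closed subsets obtained in this way has a minimal element; fix $S$ realizing it. First I would record that enlarging $S$ can only shrink the bad locus: if $S\subseteq T$, then the composite $\cX\to\cB\Gm^{|T|}\to\cB\Gm^{|S|}$ recovers the map attached to $S$, and the stabilizer kernel for $T$ is contained in that for $S$, so $B(T)\subseteq B(S)$. It therefore suffices to prove $B(S)=\varnothing$; I will assume the contrary and contradict minimality.

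Suppose $B(S)\neq\varnothing$ and let $p$ be a generic point of an irreducible component of $B(S)$. Its proper generalizations all lie outside $B(S)$, so the punctured spectrum $\spec(\cO_{X,p})\smallsetminus\{p\}$ maps into the good locus $X\smallsetminus B(S)$, where $S$ is a generating set; consequently $\cU:=\cX\times_X(\spec(\cO_{X,p})\smallsetminus\{p\})$ admits a generating set. The localized morphism $\cX\times_X\spec(\cO_{X,p})\to\spec(\cO_{X,p})$ is again a good moduli space with smooth source, and it is effective because effectiveness is stable under base change (\Cref{lm:composition}); when $p$ is a generic point of $X$ the punctured spectrum is empty and the assertion is the content of \Cref{prop la conj holds for 0 dim gms}, while in the remaining cases \Cref{prop passo induttivo generating set} applies. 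Either way we obtain a generating set $\{\cN_1,\dots,\cN_s\}$ on $\cX\times_X\spec(\cO_{X,p})$.

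The last step is to globalize. Since line bundles commute with the filtered limit defining $\cO_{X,p}$, each $\cN_k$ spreads to a line bundle on $\cX\times_X V$ for some open $V\ni p$; as $\cX$ is smooth the restriction $\Pic(\cX)_\bQ\to\Pic(\cX\times_X V)_\bQ$ is surjective, so after replacing $\cN_k$ by a suitable tensor power (harmless for the DM condition, as raising to a power only postcomposes with an isogeny of $\Gm^s$) I can extend it to a global line bundle $\widetilde{\cN}_k$ on $\cX$. Set $S':=S\cup\{\widetilde{\cN}_1,\dots,\widetilde{\cN}_s\}$. On $X\smallsetminus B(S)$ it still generates because it contains $S$, and on a neighborhood of $p$ it generates because it contains the $\widetilde{\cN}_k$, which restrict to a generating set on $\cX\times_X\spec(\cO_{X,p})$, and the DM locus is open. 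Thus $B(S')\subseteq B(S)$ omits a neighborhood of $p$, so $B(S')\subsetneq B(S)$, contradicting minimality; hence $B(S)=\varnothing$ and $S$ is the desired generating set. The main obstacle I anticipate is purely in the bookkeeping: verifying that $B(S)$ is closed and saturated so that it is honestly a subset of $X$, and that the representability-in-DM-stacks property spreads out from $\cX\times_X\spec(\cO_{X,p})$ to an actual open neighborhood — both of which rest on the upper semicontinuity of stabilizer dimension already exploited in the preparatory lemmas.
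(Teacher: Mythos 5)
Your proposal is correct and takes essentially the same route as the paper: a Noetherian induction over $X$ whose base case is \Cref{prop la conj holds for 0 dim gms} (applied at generic points), whose inductive step is \Cref{prop passo induttivo generating set} applied after localizing at a generic point of the bad locus (using that effectiveness is stable under base change), and whose globalization uses smoothness of $\cX$ to extend line bundles together with openness of the DM condition. The only difference is one of bookkeeping: you induct on a minimal bad closed subset $B(S)\subseteq X$ attached to a family of line bundles, while the paper dually takes a maximal open substack of $\cX$ admitting a generating set and enlarges it; the two formulations are interchangeable, and if anything yours makes the hypothesis of \Cref{prop passo induttivo generating set} (that the preimage of the punctured local spectrum already carries a generating set) slightly more transparent, since $B(S)$ lives in $X$ and its complement is automatically saturated.
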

\begin{proof}Consider the following set
$$\sU:=\{\cU \subseteq \cX:\cU\text{ is open and admists a generating set}\}.$$
It is ordered by inclusion, and every two elements $\cU_1,\cU_2\in \sU$ admit $\cU_3$ which contains both. Indeed, since $\cX$ is smooth, we can extend each line bundle in the generating sets of $\cU_1$ and $\cU_2$ to $\cU_1\cup \cU_2$. Consider now the direct sum of direct sum of the extensions of the line bundles coming from $\cU_1$ and $\cU_2$: the associated $\Gm^{n_1+n_2}$-torsor will be DM both when restricted to $U_1$ and when restricted to $U_2$, thus $\cU_1\cup \cU_2\in \sU$. 

 Observe also that $\sU$ is not empty, as from \Cref{prop la conj holds for 0 dim gms} we have a generating set over the generic fiber of $\pi$, which we can spread out.

Consider then $\cU$ a maximal element of $\sU$, and assume by contradiction that $\cU\neq\cX$. Let $p\in \cX\smallsetminus \cU$ be a generic point of an irreducible component of $ \cX\smallsetminus \cU$. From \Cref{prop passo induttivo generating set} the stack $\cX\times_{X}\spec(R)$ admits a generating set, which we can spread out to a generating set of $\cU'$, an open subset of $\cX$ \textit{containing} $p$. Then $\cU\cup \cU'$ admits a generating set, which contradicts the maximality of $\cU$. So $\cU=\cX$ as desired. 
\end{proof}

\begin{Remark}
    Observe that if $\cX$ admits a DM morphism $\cX\to \cB \Gm^n$, then its moduli space $\cX\to X$ is always effective if $X$ is separated; while for the vice versa we need some assumptions on the singularities of $\cX$, to extend the line bundles. For example, there are examples of $\Gm$-gerbes $\cX\to X$ over singular $X$ whose class in $\oH^2(X,\Gm)$ is not torsion (see for example \cite[II.1.11.b]{gr_brauer_thing}); those would be effective, but not a global quotient of a DM stack by a torus. 
    To check that any $\Gm$-gerbe is effective, it suffices to check the following. Given any $\Gm$-gerbe $\cX'\to \cT$, where $\cT$ is a DM gerbe, over the spectrun of a DVR $R$ which is trivial over a dense open substack $\cU\subseteq \cT$, is indeed trivial. But then $\cX'\to \cT$ is trivial: the map $\oH^2(\cT,\Gm)\to \oH^2(\cU,\Gm)$ is injective. So $\cX'\cong \cB \Gm\times \cZ$ and the second projection is effective. On the other hand $\cX$ cannot be the quotient of a DM stack by a torus. Otherwise, there would be a map $\cX\to \cB \Gm^n$. This corresponds to $n$ line bundles on $\cX$ which are twisted, and up to replacing some these line bundles with an appropriate multiple, they would be $m$-twisted for $m$ big enough. But then its determinant would be an $nm$ twisted line bundle, so $\cX\to X$ would be $nm$-torsion from \cite{lieblich2008twisted}*{Propisition 3.1.1.8}. It would be interesting to give a finer characterization of the singularities on $\cX$ for which the thesis of \Cref{teo effettivo implica ho il gen set} still holds. 
\end{Remark}

\bibliographystyle{amsalpha}
\bibliography{bibliography}

\end{document}